\documentclass[a4paper]{article}

\usepackage{natbib}

\usepackage[all]{xy}\usepackage[latin1]{inputenc}       
\usepackage[dvips]{graphics,graphicx}
\usepackage{amsfonts,amssymb,amsmath,xcolor,mathrsfs, amstext}
\usepackage{amsbsy, amsopn, amscd, amsxtra, amsthm, authblk, enumerate,dsfont}
\usepackage{upref}
\usepackage{geometry}
\usepackage[displaymath]{lineno}
\usepackage{float}
\usepackage{bbm}
\usepackage{multicol}
\usepackage{comment}
\usepackage{longtable, booktabs}

\allowdisplaybreaks

\usepackage[colorlinks,
            linkcolor=blue,
            anchorcolor=green,
            citecolor=blue
            ]{hyperref}

\numberwithin{equation}{section}

\newtheorem{theorem}{Theorem}[section]
\newtheorem{lemma}[theorem]{Lemma}
\newtheorem{remark}[theorem]{Remark}

\newtheorem{proposition}[theorem]{Proposition}
\newtheorem*{proposition*}{Proposition}

\newtheorem*{definitions*}{Definitions}

\newtheorem*{example*}{\bf Example}

\numberwithin{equation}{section}

\newcommand\dd{\mathop{}\!\mathrm{d}} 

\DeclareMathOperator{\supp}{supp}
\DeclareMathOperator{\dist}{dist}

\newcommand{\eqb}{\begin{equation}}
\newcommand{\eqe}{\end{equation}}

\title{Conformal non-removability of the Brownian graph}
\author{Haoyu Liu\thanks{Peking University}}

\begin{document}

\maketitle

\begin{abstract}
We prove that the graph of a standard one-dimensional Brownian motion, $\Gamma_B=\{(t,B_t):t \in [0,1]\}$, is almost surely not conformally removable. This resolves the question left open by Doherty and Miller in their study of the Sobolev removability of the Brownian graph~\cite{DM26}. We also construct non-removable $\alpha$-H\"older graphs for every $\alpha \in (0,2/3)$, including the previously open range $\alpha \in [1/2, 2/3)$. Together with the known removability result for $\alpha>2/3$, this identifies $2/3$ as the threshold for conformal removability of H\"older graphs.
\end{abstract}

\setcounter{tocdepth}{1}
\tableofcontents

\section{Introduction}\label{sec:intro}

A compact set $E \subset \widehat{\mathbb C}$ is called \emph{conformally removable} if every homeomorphism of $\widehat{\mathbb C} $ that is conformal on $\widehat{\mathbb C} \setminus E$ is in fact conformal on $\widehat{\mathbb C}$, and thus M\"obius. Conformal removability has been extensively studied in complex analysis and fractal geometry. It plays an important role, for instance, in the uniqueness theory of conformal welding: removability of the welding interface implies uniqueness of the resulting conformal structure up to post-composition by a M\"obius transformation. We refer to~\cite[Section 5]{Younsi15} for a comprehensive survey.

A closely related concept is given by removability of continuous Sobolev functions. For $1 \le p \le \infty$, a compact set $E \subset \mathbb C$ is called \emph{$W^{1,p}$-removable} if every continuous function on a neighborhood $D$ of $E$ that belongs to $W^{1,p}(D \setminus E)$ also belongs to $W^{1,p}(D)$. This property is independent of the choice of $D$, so the neighborhood may be omitted from the terminology. For $1 \le p<q \le \infty$, $W^{1,p}$-removability implies $W^{1,q}$-removability. Moreover, it is known that $W^{1,2}$-removability implies conformal removability, while the converse is a longstanding conjecture~\cite{Jones1995,JS00,Nta-CNED}. For $p>2$, however, Sobolev removability does not in general imply conformal removability, as the example of the Sierpi\'nski gasket below illustrates.

Several classical sufficient conditions for conformal removability are known. For example, a compact set that is a countable union of rectifiable curves is removable. The Jones--Smirnov criterion implies, in particular, that boundaries of H\"older domains are removable~\cite{JS00}. In the opposite direction, it is known that compact sets with nonempty interior are conformally non-removable, whereas fractals with empty interior exhibit subtler behavior. Let $\mathtt C \subset [0,1]$ be the standard $1/3$-Cantor set. Then $\mathtt C \times [0,1]$ is conformally non-removable; see, e.g.,~\cite{Nta-Sobolev}. The standard Sierpi\'nski carpet is also conformally non-removable, since it contains $\mathtt C \times [0,1]$. The case of the Sierpi\'nski gasket is considerably more delicate, and it was proved recently by Ntalampekos~\cite{Nta-gasket,Nta-Sobolev} that it is conformally non-removable and is $W^{1,p}$-removable if and only if $p \in (2,\infty]$. In general, there is no simple characterization of conformally removable sets. In particular, it was proved in~\cite{bishop2020conformal} that the collection of conformally removable compact subsets of $[0,1]^2$ is not Borel with respect to the Hausdorff metric.

Random planar curves provide another natural setting for studying conformal removability. One notable example is the Schramm--Loewner evolution (SLE), whose removability is closely connected to the conformal welding of Liouville quantum gravity surfaces~\cite{Sheffield-zipper,DMS21-LQG-MRT}. For $\kappa \in (0,4)$, an SLE$_\kappa$ curve is the boundary of a H\"older domain and is therefore conformally removable~\cite{RS05,JS00}. Removability also holds in the critical case $\kappa=4$~\cite{KMS-SLE-4}. In the non-simple regime $\kappa \in (4,8)$, Kavvadias, Miller, and Schoug proved removability whenever the adjacency graph of the complementary connected components is connected~\cite{KMS-nonsimple}; this condition is shown to hold for $\kappa \in (4,\kappa_0]$, where $\kappa_0 \approx 5.6158$~\cite{GP20}. In joint work with Zhuang~\cite{LZ26}, the author proved that SLE$_\kappa$ contains a topological Sierpi\'nski carpet when $\kappa$ is sufficiently close to 8. Combined with~\cite{Nta-carpet}, this yields conformal non-removability, as well as non-uniqueness of conformal welding, in that regime. Conformal removability remains open for the other values of $\kappa$ in $(4,8)$.

The special case of graphs of continuous functions has also been studied extensively. Recall that, for $\alpha \in (0,1]$, a function $f:[0,1] \to \mathbb R$ is \emph{$\alpha$-H\"older continuous} if there exists $C>0$ such that $|f(x)-f(y)| \le C|x-y|^\alpha$ for every $x,y \in [0,1]$. The graph $\Gamma_f:=\{(t,f(t)) : t \in [0,1]\}$ is called an $\alpha$-H\"older graph. Kaufman constructed conformally non-removable graphs using Cauchy transforms of measures supported on the graph~\cite{Kaufman1984,Kaufman1986}; his constructions include $\alpha$-H\"older graphs for every $\alpha<1/2$. Tecu developed a systematic rectangular ``Devil's staircase'' construction for Sobolev non-removability~\cite{Tecu10}. For $1<p<\infty$, he proved that every $\alpha$-H\"older graph is $W^{1,p}$-removable when $\alpha>p/(2p-1)$, and constructed non-removable examples for $\alpha<p/(2p-1)$. At $p=\infty$, every $\alpha$-H\"older graph with $\alpha>1/2$ is $W^{1,\infty}$-removable, whereas the constructions of Kaufman and Tecu give graphs with $\alpha<1/2$ that are neither $W^{1,\infty}$-removable nor conformally removable. In a separate note~\cite{Holder-manuscript}, removability is established at the critical exponent $\alpha=p/(2p-1)$ for every $1 \le p<\infty$, as well as at $\alpha=1/2$ for $p=\infty$. In particular, every $2/3$-H\"older graph is $W^{1,2}$-removable and hence conformally removable. Thus every $\alpha$-H\"older graph with $\alpha \ge 2/3$ is conformally removable. Prior to the present work, conformal removability of H\"older graphs was unresolved throughout the range $1/2 \le \alpha<2/3$.

Our first main result concerns the graph of a standard one-dimensional Brownian motion,
\[ \Gamma_B=\{(t,B_t):0\le t \le 1\} \subseteq \mathbb C. \]
Almost surely, the Brownian motion is $\alpha$-H\"older continuous for every $\alpha<1/2$, but not for $\alpha=1/2$, and $\Gamma_B$ has Hausdorff dimension $3/2$; see, for example,~\cite{MP10}. The complex analytic properties of the Brownian graph have recently attracted attention. Binder, Hakobyan, and Li proved that the Brownian graph is almost surely minimal for conformal dimension~\cite{BHL25}. More recently, Doherty and Miller completely characterized its Sobolev removability: almost surely it is not $W^{1,p}$-removable for any finite $p$, while it is $W^{1,\infty}$-removable~\cite{DM26}. Their non-removability proof is a stochastic implementation of Tecu's rectangular construction. They left conformal removability open and observed that the direct mechanism in the earlier constructions of Kaufman and Tecu cannot settle the question, since the resulting exceptional maps already violate $W^{1,\infty}$-removability.

The main purpose of this paper is to resolve this question.

\begin{theorem}\label{thm:main}
Let $B$ be a standard one-dimensional Brownian motion. Then $\Gamma_B$ is almost surely not conformally removable.
\end{theorem}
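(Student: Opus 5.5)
We would prove non-removability by producing a non-conformal homeomorphism of $\widehat{\mathbb C}$ that is conformal off $\Gamma_B$. Such a map cannot come from a Lipschitz perturbation, since $\Gamma_B$ is $W^{1,\infty}$-removable. The natural tool is therefore Beltrami equations with a Beltrami coefficient supported on a set of positive measure \emph{near} but not on the graph, combined with a limiting procedure. We prefer the more robust criterion that a compact set is non-removable if it contains a suitable ``Cantor-like'' or carpet-like topological obstruction. Concretely, we aim to show that $\Gamma_B$ admits a sequence of nested coverings by disjoint ``flat'' boxes whose complementary geometry mimics the non-removable set $\mathtt C\times[0,1]$. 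The key is that non-removability is preserved under quasiconformal images and is detected by an extremal-length (modulus) obstruction.

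\textbf{Step 1 (deterministic criterion).} First we formulate a deterministic sufficient condition for non-removability of a graph $\Gamma_f$. We look for nested families $\mathcal Q_n$ of disjoint topological quadrilaterals with the following properties. The union of $\mathcal Q_{n+1}$ lies inside that of $\mathcal Q_n$. Each $Q\in\mathcal Q_n$ contains a piece of $\Gamma_f$ crossing it between two opposite sides. The conformal moduli of the children $Q'\subset Q$ satisfy a summability condition of the type $\sum_{Q'\subset Q}\mathrm{mod}(Q')^{-1}\ge(1+\delta_n)\,\mathrm{mod}(Q)^{-1}$ with $\prod(1+\delta_n)=\infty$.

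From such data we build quasiconformal maps $\phi_n$ that are conformal outside $\bigcup\mathcal Q_n$. These maps send each $Q$ to a rectangle of prescribed modulus, in the spirit of Kaufman's and Tecu's staircases, but they are glued through conformal (not merely Lipschitz) maps. We then pass to a limit with uniform H\"older equicontinuity and obtain a homeomorphism conformal off $\Gamma_f$. Non-conformality of the limit is detected by a modulus defect: the family of curves crossing the top-level quadrilateral has modulus changed by a factor bounded away from $1$. This mechanism is what allows us to beat the $W^{1,\infty}$ barrier. The distortion concentrates on a positive-measure region \emph{surrounding} the graph, and the graph itself is the limiting singular set.

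\textbf{Step 2 (Brownian input).} Next we verify the criterion almost surely for $B$. We use scaling and the strong Markov property. In a dyadic time window of length $2^{-n}$, the graph sits in a box of width $2^{-n}$ and height of order $2^{-n/2}$, which has large aspect ratio. Inside it we select random sub-windows in which the oscillation is atypically large or small, producing children quadrilaterals with favorable moduli. The independence of increments makes the events ``$Q$ has good children'' i.i.d. across disjoint windows up to a controlled dependence. A Galton--Watson-type or percolation argument then yields an infinite good subtree with probability bounded below. A $0$--$1$ law, using Blumenthal or scaling invariance together with countably many disjoint windows, upgrades this to probability one.

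\textbf{Main obstacle.} We expect the hardest step to be Step 1, specifically proving that the limit map is genuinely non-conformal. The modulus gain must accumulate multiplicatively rather than being absorbed by the conformal gluing. Our first attempt would be to use the exact extremal-length of rectangles together with the Gr\"otzsch/serial rule, carefully tracking the error created by nonrectangular Brownian boxes through Koebe distortion.
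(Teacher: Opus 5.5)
Your proposal has a genuine gap, and it sits where you locate the main obstacle. The ``deterministic criterion'' of Step~1 is neither a known theorem nor proved, and as formulated it cannot yield a non-M\"obius map.

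For quasiconformal maps, the uniform H\"older equicontinuity you invoke comes from a uniform bound on the dilatation (Mori's theorem). But then any normalized limit of the $\phi_n$ is itself $K$-quasiconformal. A $K$-quasiconformal homeomorphism of $\widehat{\mathbb C}$ that is conformal off $\Gamma_B$, a set of zero area, has Beltrami coefficient $0$ almost everywhere and is therefore M\"obius. So whatever modulus defect the $\phi_n$ carry at finite stages disappears in the limit. Put differently, a non-M\"obius homeomorphism that is conformal off a null set is never quasiconformal. Any limiting scheme of this kind must therefore let the dilatation blow up, and then compactness and, above all, injectivity of the limit are exactly what has to be proved. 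Your outline supplies no mechanism for that.

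Also, removability is not a topological property. $\Gamma_B$ is an arc, so it contains no carpet or copy of $\mathtt C\times[0,1]$, and quasiconformal invariance of non-removability transfers nothing to it. Step~2 is too vague to assess: there is no concrete event on the path and no quantitative link between Brownian statistics and the moduli inequality you would need.

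The key idea missing from your proposal is to obtain injectivity from positivity rather than from quasiconformal compactness. The paper takes $F(z)=z+\epsilon f(z)$ with $f$ continuous, $\bar\partial f=\mu$ a positive measure on the graph, and $\partial f=\mu+g\,\dd A$, where only $\operatorname{Re}g$ is required to be bounded. Then $\operatorname{sym}DF\ge c_0I\,\dd A$ as measures, and mollification gives $(F(z)-F(w))\cdot(z-w)\ge c_0|z-w|^2$. So $F$ is a homeomorphism, conformal off $\supp\mu$ and non-M\"obius, even though it is not Lipschitz (consistent with $W^{1,\infty}$-removability).

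This reduces the problem to constructing a real $v$ with $D_xv=\mu+a\,\dd A$, $D_yv=b\,\dd A$, $b\in L^q$ for some $q>2$, and $\mathcal R_x\mathcal R_yb\in L^\infty$. The paper builds $v$ by Tecu/Doherty--Miller mass redistribution on a regular tree of Brownian upboxes. Two probabilistic inputs make the mixed Riesz bound summable: the fast-duration condition $\mathrm{len}(I(P))\le\mathsf CH_n^2$ for retained upboxes, and the uniform count $N^+_{H_n,\ell}(W)\le C_B\sqrt{n+1}\sqrt R/H_n$ of upsteps meeting a time window of length $R$. Together they give $\|\mathcal T\mathsf b_n\|_{L^\infty}\lesssim\sqrt{n+1}(\sqrt K/m)^n$. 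None of these ingredients, or substitutes for them, appear in your outline.
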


Our proof gives the following more precise statement.

\begin{theorem}\label{thm:construction}
Almost surely, there exist a nonzero finite positive Radon measure $\mu$ supported on $\Gamma_B$ and a continuous function $f:\mathbb C \to \mathbb C$ with $\bar\partial f=\mu$ such that, for every sufficiently small $\epsilon>0$, the map $z \mapsto z+\epsilon f(z)$ extends to a non-M\"obius homeomorphism of $\widehat{\mathbb C}$ that is conformal on $\widehat{\mathbb C} \setminus \supp \mu$.
\end{theorem}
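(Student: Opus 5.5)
The plan follows Kaufman's Cauchy-transform mechanism. Set $f=\frac{1}{\pi}\,\mu * \frac{1}{z}$, so that $\bar\partial f=\mu$ in the sense of distributions, and $f$ is holomorphic off $\supp\mu$ with $f(z)\to 0$ at $\infty$. If we can choose $\mu$ so that $f$ is continuous and, more strongly, \emph{Lipschitz} on $\mathbb C$ with constant $L$, the rest is routine. For $\epsilon<1/(2L)$ we have
\[
|(z+\epsilon f(z))-(w+\epsilon f(w))| \ge (1-\epsilon L)|z-w|,
\]
so $F_\epsilon=z+\epsilon f$ is injective. It is also proper, since $f$ is bounded, and therefore a homeomorphism of $\mathbb C$ that extends to $\widehat{\mathbb C}$ by fixing $\infty$. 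It is conformal off $\supp\mu$. It is not M\"obius: a M\"obius map fixing $\infty$ with $F_\epsilon(z)-z\to 0$ would be the identity, forcing $f\equiv 0$ and hence $\mu=0$.

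Lipschitz is too much to ask, though, since the introduction notes that maps of this kind would violate $W^{1,\infty}$-removability. So the real requirement is weaker. We need $f$ continuous, and we need $F_\epsilon$ injective for some other reason. I would aim for a quasi-Lipschitz modulus $|f(z)-f(w)|\le C|z-w|\log(e/|z-w|)$ together with a \emph{one-sided} condition, namely $\operatorname{Re}\bigl(\overline{(z-w)}\,(f(z)-f(w))\bigr)\ge -\tfrac12\epsilon^{-1}|z-w|^2$. Such a condition already gives injectivity.

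The Brownian graph has a monotone structure that makes a condition like this plausible. Take $\mu$ to be the pushforward, under $t\mapsto (t,B_t)$, of a measure $\nu$ on $[0,1]$ that lives on a random Cantor set of times where $B$ has a one-sided controlled oscillation. Locally, $\mu$ then looks like a superposition of near-horizontal measures, whose Cauchy transforms have derivative of a fixed sign in the $x$-direction.

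The construction of $\nu$ is multiscale. At scale $2^{-n}$, split $[0,1]$ into dyadic intervals $I$. Keep those intervals on which the Brownian increment and the oscillation $\sup_I|B-B(\text{left endpoint})|$ are both at most $K2^{-n/2}$, and distribute mass proportionally among the children kept. The construction must retain a positive-probability, branching-process-type survival; one then upgrades to almost sure survival via Blumenthal's $0$-$1$ law and Brownian scaling applied near time $0$.

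The Cauchy transform is split into scales, $f=\sum_n f_n$, where $f_n$ comes from interactions at distance about $2^{-n}$. Bound $\|\nabla f_n\|_\infty$ by the ratio of $\mu$-mass to distance at that scale, and show that the cancellations coming from the symmetric vertical spread of width $2^{-n/2}$ make the sum of gradients converge, or diverge only logarithmically. This estimate is the heart of the continuity and modulus bounds.

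I expect the main obstacle to be injectivity of $z+\epsilon f$ without a global Lipschitz bound. The derivative $\partial f$ is unbounded near $\supp\mu$, and $\mu$'s vertical spread $2^{-n/2}\gg 2^{-n}$ is exactly what breaks the naive Lipschitz estimate. My first attempt would be to show that $\partial f$ stays in a sector, $|\arg(1+\epsilon\,\partial f)|<\pi/2-\delta$, off the support. The Lipschitz estimate is then replaced by a monotonicity (Dini-type) argument along horizontal and vertical segments, which crosses $\supp\mu$ only on a set of small capacity. Alternatively, I would prove injectivity on the approximations $\mu_n$, where $f_n$ is smooth, uniformly in $n$, via a uniform quasiconformal-type distortion bound. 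I would then pass to the limit: a uniform limit of homeomorphisms with uniform lower modulus of injectivity is a homeomorphism. Getting that uniform lower modulus from the Brownian fluctuations is where I expect the technical difficulty to concentrate.
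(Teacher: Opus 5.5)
There is a genuine gap. Your framework is right: $f$ is a continuous version of $\mathcal C\mu$, and your one-sided inequality is exactly the strong monotonicity~\eqref{eq:strong-monotonicity} that Proposition~\ref{prop:criterion} delivers. But nothing in the proposal proves that inequality, and the heuristics you offer point the wrong way.

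\textbf{What the inequality actually requires.} Since $\bar\partial F=\epsilon\mu$ is real and nonnegative, the quadratic form of $\operatorname{sym}DF$ in a direction $h$ is $(\operatorname{Re}\partial F-\epsilon\mu)|h|^2+2\epsilon\mu h_1^2$. Using vertical difference quotients in one direction and mollification as in Proposition~\ref{prop:criterion} in the other, the bound $\operatorname{Re}\bigl(\overline{(z-w)}(f(z)-f(w))\bigr)\ge -C|z-w|^2$ is \emph{equivalent} to the distributional inequality $\operatorname{Re}\partial f\ge \mu-C\,\dd A$. For $f=\mathcal C\mu$ this reads $2\mathcal R_y^2\mu\ge -C$.

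So injectivity is decided by the sign of the singular part of $\partial f$ on $\supp\mu$, not by how small the set is where segments cross it. For arclength on a horizontal segment this singular part is $-\mu$, which is fatal; for a vertical segment it is $+\mu$. A ``superposition of near-horizontal measures'' is therefore exactly what must be avoided. Mass has to be organized vertically, which the locally steep Brownian graph permits.

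Your reduction to an off-support bound can be rescued, but not by capacity. Each vertical line meets $\Gamma_B$ at most once, so continuity of $f$ and $\operatorname{Re}f'\ge -C$ off $\supp\mu$ already give $D_y\operatorname{Im}f=\operatorname{Re}\partial f-\mu\ge -C\,\dd A$. Along horizontal lines, which meet $\Gamma_B$ in uncountable level sets, a Dini argument cannot work on its own, and it is not needed.

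The quasi-Lipschitz modulus you aim for is impossible. For a.e.\ $r$, $\mu(B(z,r))=\frac{1}{2i}\oint_{\partial B(z,r)}(f(w)-f(z))\,\dd w\lesssim r^2\log(e/r)$, which forces $\dim\supp\mu=2>3/2=\dim\Gamma_B$. Only continuity is needed.

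\textbf{The missing construction.} The real content of the theorem is building $\mu$ on $\Gamma_B$ with $\mathcal R_y^2\mu$ bounded below, and your plan has no mechanism for this. The paper goes through Proposition~\ref{prop:criterion+}. Suppose $\mu=D_xv-a\,\dd A$ with $v\in C_c(\mathbb C)$ real, $a\in C_c^\infty(\mathbb C)$, and $D_yv=b\,\dd A$ with $b\in L^q_c(\mathbb C)$, $q>2$. Then $f=2v-\mathcal C((a+ib)\dd A)$ is continuous with $\bar\partial f=\mu$, and $2\mathcal R_y^2\mu=2\mathcal R_x\mathcal R_yb-2\mathcal R_y^2a$. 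Everything therefore reduces to $\mathcal R_x\mathcal R_yb\in L^\infty$.

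Your measure splits mass among retained dyadic \emph{time} intervals, so its profile in height follows the time the path spends near each level. In the extreme case of Lebesgue measure in time, $\mu=D_x\lambda$ with $\lambda(x,y)$ the local time of $B$ at level $y$ up to time $x$, and $D_y\lambda$ is not even a function.

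The paper's redistribution is instead uniform in height. Each upbox's mass is split among the $(1+2r)K$ height rows by a smooth partition of unity, then equally among the $m$ retained descendants in each row. Thus every $D_xu_n$ has the same smooth $y$-marginal $\omega$, and $b\in L^q$ once $m>K^{1-1/q}$ (Lemma~\ref{lem:vertical-derivative}).

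The bound $\mathcal R_x\mathcal R_yb\in L^\infty$ (Proposition~\ref{prop:riesz-bounded}) then rests on three inputs absent from your outline:
\begin{enumerate}[(i)]
\item the oddness of $K_{\mathcal T}$ in the vertical variable, which gives a pointwise estimate free of derivatives of $G_{P,j}$ (Lemma~\ref{lem:single});
\item the fast-duration condition $\mathrm{len}(I(P))\le\mathsf CH_n^2$ built into the tree (Proposition~\ref{prop:construct-tree}(c));
\item the uniform upstep count of Lemma~\ref{lem:uniform-count}.
\end{enumerate}
Together these give $\|\mathcal T\mathsf b_n\|_{L^\infty}\lesssim\sqrt{n+1}\,(\sqrt K/m)^n$. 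The ``cancellations from the symmetric vertical spread'' in your plan are asserted rather than derived, and that is precisely where these ingredients are needed.
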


Since $\supp \mu \subseteq \Gamma_B$, Theorem~\ref{thm:main} follows immediately from Theorem~\ref{thm:construction}. In fact, the function $f$ constructed in the proof is a continuous representative of the Cauchy transform of $\mu$; see Remark~\ref{rem:identification}.

The same analytic criterion also yields a deterministic construction of conformally non-removable H\"older graphs.

\begin{theorem}\label{thm:holder-graph}
    For every $0<\alpha<2/3$, there exists an $\alpha$-H\"older continuous function $f:[0,1] \to \mathbb R$ such that its graph $\Gamma_f=\{(t,f(t)):0 \le t \le 1\}$ is not conformally removable.
\end{theorem}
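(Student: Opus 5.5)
We construct $f$ so that its graph carries a measure $\mu$ to which the same analytic criterion behind Theorem~\ref{thm:construction} applies. That criterion asks for a nonzero finite measure $\mu$ supported on the graph whose Cauchy transform $\mathcal C\mu(z)=\frac{1}{\pi}\int \frac{d\mu(w)}{z-w}$ has a continuous representative $F$. The map $z\mapsto z+\epsilon F(z)$ must also be injective for small $\epsilon$. It suffices that $F$ have a modulus of continuity in the difference direction: $|F(z)-F(w)|\le C|z-w|$ whenever $z,w$ are separated by the support, or more realistically a quasi-Lipschitz control of $\mathrm{Re}$-type that forces $\mathrm{Re}\,\langle (z-w),(F(z)-F(w))\rangle \ge -C|z-w|^2$. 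Injectivity then follows from a degree argument. Non-M\"obius follows since $\bar\partial F=\mu\neq 0$.

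\textbf{Step 1: Cantor-type staircase.} Fix scales $r_n=\prod_{k\le n}\lambda_k$. At stage $n$ each surviving horizontal segment of length $\ell_n$ is subdivided into $N_n$ subintervals. The function rises by height $h_n$ across the subinterval, and we keep $M_n$ of the subintervals as the ``active'' pieces carrying mass. The H\"older condition $h_{n+1}\lesssim (\ell_n/N_n)^\alpha$ is imposed at each stage. The measure $\mu$ is the natural weak limit, giving equal mass to active pieces, pushed forward to the graph.

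\textbf{Step 2: estimate the Cauchy transform stage by stage.} We write $\mathcal C\mu=\sum_n (\mathcal C\mu_n-\mathcal C\mu_{n-1})$, where $\mu_n$ is the stage-$n$ approximation. Each difference is a sum of dipole-type contributions localized near active boxes of size $\ell_n\times h_n$. Their far-field decays like the quadrupole, because mass is redistributed within each box preserving total mass and (by symmetry of the construction) the first moment. Summing gives uniform convergence, hence continuity of $F$, plus a bound on $\nabla F$ off the support. The key estimate is that the oscillation of $F$ across one stage-$n$ box is $\lesssim \mu(\text{box})/\ell_n \cdot \log$-free factors. Summing over stages, this is summable precisely when the geometry matches dimension exceeding $4/3$. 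We expect that Hausdorff dimension $2-\alpha$ of the graph exceeding $4/3$ is exactly the condition $\alpha<2/3$.

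\textbf{Step 3: injectivity and conclusion.} Using the Step~2 bounds, we show that $z+\epsilon F$ is a local homeomorphism off the support with Jacobian $\ge 1-C\epsilon$. Its winding numbers around points are constant, and it equals $z+O(1/z)$ at infinity, so it is a homeomorphism of $\widehat{\mathbb C}$ conformal off $\Gamma_f$. It is not M\"obius, since $\bar\partial=\epsilon\mu\ne0$.

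The main obstacle is Step~2: making the oscillation of $F$ near the support small enough to preserve injectivity with only H\"older exponent up to $2/3$, rather than $1/2$. The Kaufman/Tecu mechanism fails there because $\nabla F$ is unbounded. We would first try a two-scale staircase, alternating steep thin pieces that carry no mass with flat wide pieces that carry the mass. Using cancellation of the mass-preserving redistribution, this keeps $F$ Lipschitz in the needed directional sense.
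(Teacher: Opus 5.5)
There is a genuine gap in Steps 2--3, and the mechanism you rely on cannot work in the new range. You plan to get ``a bound on $\nabla F$ off the support'' and then a Jacobian lower bound $1-C\epsilon$. For $\alpha\in[1/2,2/3)$ this is impossible, not just hard. Such graphs are $W^{1,\infty}$-removable, so a continuous $F$ with bounded gradient off $\Gamma_f$ would be Lipschitz near $\Gamma_f$. That forces $\bar\partial F\in L^\infty$, contradicting $\bar\partial F=\mu$ singular. Your last paragraph concedes that $\nabla F$ must be unbounded, but nothing in the plan replaces it.

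Even a correct Jacobian bound off $\Gamma_f$ would not be enough. A degree argument only counts preimages of points outside $(z+\epsilon F)(\Gamma_f)$ and cannot rule out identifications on the graph. Positivity of $\mu$ alone also does not fix the sign of the singular part of $D\mathcal C\mu$: for arclength on a horizontal segment, the singular part of $\partial\mathcal C\mu$ is $-\mu$. The one-sided inequality $\operatorname{Re}\langle z-w,F(z)-F(w)\rangle\ge -C|z-w|^2$ you wrote down is the right target, but it must come from a distributional inequality that holds \emph{across} the support.

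The missing idea is the paper's structure. Build a \emph{real} $v\in C_c$ with $D_xv=\mu+a\,\dd A$ ($a$ smooth) and $D_yv=b\,\dd A$ ($b\in L^q_c$, $q>2$), and set $F=2v-\mathcal C((a+ib)\,\dd A)$. Then $\bar\partial F=\mu$, $\partial F=\mu+g\,\dd A$, and $\operatorname{sym}DF=(\operatorname{Re}g)\,I\,\dd A+2\mu\,e_1\otimes e_1$. The singular part is positive semidefinite. $\operatorname{Im}g$, which involves $b$ and $(\mathcal R_y^2-\mathcal R_x^2)b$ and must be unbounded here by the obstruction above, drops out. The remaining part $\operatorname{Re}g=(\mathrm{id}-\mathcal R_y^2+\mathcal R_x^2)a+2\mathcal R_x\mathcal R_yb$ is bounded as soon as $\mathcal R_x\mathcal R_yb\in L^\infty$. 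Mollifying $\operatorname{sym}D(z+\epsilon F)\ge c_0I$ and integrating along segments then gives strong monotonicity, hence injectivity.

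Step 1 also fails as described. A ``staircase'' in which $f$ rises across subintervals, or your steep-thin/flat-wide variant, is monotone. Its graph is then rectifiable and hence conformally removable, whatever $\mu$ you place on it, and its dimension is $1$, not $2-\alpha$. The paper's building block instead makes $M=\lceil K^\beta\rceil$ full oscillations over the enlarged range $[-r,1+r]$. It retains both up- and down-crossings in $N=(1+2r)K$ overlapping rows, with smooth row weights forming a partition of unity. It also uses horizontal densities symmetric about the interval midpoints, so each redistribution term $G_{P,j}$ has zero integral. Your first-moment cancellation idea does appear there, but only to bound $\mathcal R_x\mathcal R_y\mathsf b_n$ for far boxes via the harmonic gradient estimate, never the full gradient.

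Finally, the threshold $2/3$ is not a dimension count. It comes from requiring $b\in L^q$ with $q>2$, so that $\mathcal C((a+ib)\,\dd A)$ is continuous. This means $M>K^{1-1/q}$, i.e.\ $\beta>1-1/q>1/2$. Combined with the H\"older constraint $S^\alpha<K$ with $S\asymp MK$, i.e.\ $\alpha<1/(1+\beta)$, this gives $\alpha<2/3$. The Riesz summability condition $K^2<2MS$ is then automatic.
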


In particular, this settles the previously open range $1/2 \le \alpha<2/3$. Combined with the removability result for $\alpha \ge 2/3$~\cite{Tecu10,Holder-manuscript}, Theorem~\ref{thm:holder-graph} identifies $2/3$ as the sharp threshold for conformal removability of H\"older graphs, matching the $W^{1,2}$-removability threshold.

For $1/2 \le \alpha<2/3$, the graphs in Theorem~\ref{thm:holder-graph} are $W^{1,\infty}$-removable by~\cite{Tecu10,Holder-manuscript}, whereas Theorem~\ref{thm:main} and~\cite{DM26} give the same separation for the Brownian graph. To the best of our knowledge, these are the first examples of graphs that are $W^{1,\infty}$-removable but conformally non-removable.

Moreover, Theorem~\ref{thm:main} and~\cite{DM26} show that the Brownian graph is almost surely non-removable in both the $W^{1,2}$ and conformal senses. This, together with the agreement of the H\"older thresholds, provides further evidence for the conjectured equivalence between $W^{1,2}$-removability and conformal removability.

Let us now give an overview of the proofs. The proofs of Theorems~\ref{thm:main} and~\ref{thm:holder-graph} build on the mass-redistribution method of Tecu~\cite{Tecu10} and, in the Brownian case, its stochastic implementation by Doherty and Miller~\cite{DM26}. For Sobolev non-removability, the aim is to construct a continuous function with a singular distributional derivative supported on the graph and suitably integrable derivatives off it. To prove conformal non-removability, however, we must additionally obtain a complex-valued function that is holomorphic off the graph and gives a globally injective perturbation of the identity. To achieve this, we use Kaufman's idea of constructing conformal perturbations from Cauchy transforms~\cite{Kaufman1986}.

The common analytic criterion is given in Propositions~\ref{prop:criterion} and~\ref{prop:criterion+}. Following Kaufman, we use the positivity of the measure in the Cauchy transform to establish injectivity. The important difference is that we do not require a bound on the full complex derivative off the graph. More precisely, Proposition~\ref{prop:criterion} applies to a continuous function $f$ satisfying $\bar\partial f=\mu$ and $\partial f=\mu+g \dd A$, where $\mu$ is a nonzero finite positive measure with compact support and $g$ is locally integrable with bounded real part. The imaginary part of $g$ cancels when taking the symmetric part of the distributional derivative matrix, while $\mu$ contributes a nonnegative term. This ensures that, for every sufficiently small $\epsilon>0$, the map $z\mapsto z+\epsilon f(z)$ is strongly monotone in the sense of~\eqref{eq:strong-monotonicity}, and hence extends to a non-M\"obius homeomorphism of the sphere that is conformal off $\supp \mu$.

Proposition~\ref{prop:criterion+} puts this criterion into a form suitable for our constructions. We seek a continuous, compactly supported, real-valued function $v$ whose horizontal distributional derivative is the sum of a positive measure $\mu$ on the graph and a smooth density, while its vertical derivative has a compactly supported density $b \in L^q$ for some $q>2$. The resulting measure $\mu$ and a continuous representative $f$ of its Cauchy transform (see Remark~\ref{rem:identification}) are natural candidates for applying Proposition~\ref{prop:criterion}. The remaining analytic condition then reduces to boundedness of the mixed Riesz transform $\mathcal R_x\mathcal R_y b$.

For the Brownian graph, we first construct a regular multi-scale tree of rectangles associated with upcrossings at a geometric sequence of heights (Proposition~\ref{prop:construct-tree}). The construction is similar to that of~\cite{DM26}, except that we additionally require each retained upcrossing to have duration bounded by a fixed constant times the square of its height. We also establish a separate uniform upper bound on the number of crossings meeting a time interval (Lemma~\ref{lem:uniform-count}). These additional ingredients are the probabilistic input needed for the singular-integral estimate later in the proof.

On this tree, we carry out a smooth version of the mass redistribution, following the same general approach as Tecu and Doherty--Miller. At each generation, the mass attached to a parent box is divided equally among the selected descendants within each row, while the contributions from adjacent rows form a smooth partition of unity. Lemma~\ref{lem:measure-construction} shows that the resulting functions converge to a continuous real-valued function whose horizontal derivative is a nonzero positive measure supported on the Brownian graph. After subtracting an initial smooth term, we obtain a function whose vertical derivative has a compactly supported $L^q$ density $b$ for some $q>2$, as shown in Lemma~\ref{lem:vertical-derivative}.

The main additional estimate is the boundedness of $\mathcal R_x\mathcal R_y b$, proved in Proposition~\ref{prop:riesz-bounded}. Each contribution to $b$ is a product of a bounded horizontal function $G_{P,j}$ and a smooth vertical derivative. The oddness of the mixed Riesz kernel $K_{\mathcal T}$ in the vertical variable gives a pointwise estimate for such a product that does not involve derivatives of $G_{P,j}$ (Lemma~\ref{lem:single}). The estimate is proportional to the length of the supporting time interval and decays away from the corresponding rectangle. The duration restriction controls the former, while Lemma~\ref{lem:uniform-count} controls how many rectangles can contribute near a given point. Summing over dyadic annuli and then over generations gives uniform convergence of the series of mixed Riesz transforms. An application of Proposition~\ref{prop:criterion+} then proves Theorem~\ref{thm:construction}, and hence Theorem~\ref{thm:main}.

For the H\"older graphs, the strategy is similar, but we must construct the graph itself and sharpen the singular-integral estimate to reach higher H\"older exponents (recall that the Brownian graph is not $1/2$-H\"older continuous). We construct the graph by successive piecewise linear replacements and retain intervals corresponding to both upcrossings and downcrossings. To obtain the mixed Riesz transform bound, however, we impose an additional condition on the retained intervals: in each row, their average midpoint must coincide with the midpoint of the parent interval (Lemma~\ref{lem:holder-center}). Choosing the horizontal probability densities to be symmetric about these midpoints then ensures that every horizontal redistribution term $G_{P,j}$ has integral zero (Lemma~\ref{lem:G-p-j-sum-zero}). This cancellation replaces the use of the Brownian counting estimate and yields boundedness of the mixed Riesz transform (Proposition~\ref{prop:riesz-bounded-holder}), thereby proving Theorem~\ref{thm:holder-graph}.

\medskip
\noindent\textbf{Organization of the paper.}
Section~\ref{sec:analysis-prelim} collects the analytic preliminaries and proves the criteria for conformal non-removability. Section~\ref{sec:prob-input} establishes the probabilistic estimates and constructs the regular tree of Brownian upcrossing rectangles with the required duration bound. In Section~\ref{sec:construct-measure}, we use this tree to construct the singular measure and the vertical derivative, prove the mixed Riesz transform bound, and conclude the proofs of Theorems~\ref{thm:construction} and~\ref{thm:main}. Finally, Section~\ref{sec:holder-graph} gives a deterministic construction of non-removable H\"older graphs and proves Theorem~\ref{thm:holder-graph}.

\medskip
\noindent\textbf{Basic Notations.}
We identify $\mathbb C$ with $\mathbb R^2$, write $z=x+iy$, and denote Lebesgue measure by $\dd A=\dd x \dd y$. The Riemann sphere is $\widehat{\mathbb C}=\mathbb C \cup \{\infty\}$. The symbols $\cdot$ and $|\cdot|$ denote the Euclidean inner product and norm on $\mathbb R^2$, respectively. For $z \in \mathbb C$ and $r>0$, $B(z,r)=\{w \in \mathbb C : |w-z|<r\}$ is the open Euclidean disk. For $A \subseteq \mathbb C$, we set $\mathrm{diam}(A)=\sup\{|x-y| : x,y \in A\}$ and $\dist(z,A)=\inf\{ |z-x| : x \in A\}$. The length of an interval $I \subset \mathbb R$ is written as $\mathrm{len}(I)$. For a measure $\sigma$, its closed support is denoted by $\supp \sigma$. For $p \in [1,\infty]$, we use $\|\cdot\|_{L^p}$ for the $L^p$-norm of a function, and we write $L_c^p(\mathbb C)$ for the functions in $L^p(\mathbb C)$ with compact essential support. For an open set $\Omega$, $C_c(\Omega)$ and $C_c^\infty(\Omega)$ denote the spaces of continuous and smooth functions with compactly support in $\Omega$. Finally, $X \lesssim Y$ means that $X\le CY$ for some finite constant $C$ depending only on the fixed parameters that will be clear from the context.

\medskip
\noindent\textbf{Acknowledgement.} We thank Zijie Zhuang for helpful discussions at an earlier stage of this project. AI tools were used in a supporting role in the preparation and editing of the manuscript. The author takes full responsibility for its contents. The author is partially supported by National Key R\&D Program of China (No.\ 2023YFA1010700) and the Fundamental Research Funds for the Central Universities, Peking University.

\section{Analytic preliminaries}\label{sec:analysis-prelim}

In this section, we collect the deterministic analytic tools used in the Brownian and H\"older constructions. We review some basic concepts in distributions and measures in Section~\ref{subsec:prelim-measure}. Section~\ref{subsec:transforms-prelim} then records the relevant identities for the Riesz, Cauchy, and Beurling transforms. In Section~\ref{subsec:criterion}, we prove criteria for conformal non-removability: Proposition~\ref{prop:criterion} gives a useful condition ensuring global injectivity of a conformal perturbation of the identity, and Proposition~\ref{prop:criterion+} puts this condition into the form used in Sections~\ref{sec:construct-measure} and~\ref{sec:holder-graph}.

\subsection{Distributional derivatives and measures}\label{subsec:prelim-measure}

We first recall some notions in distributions. For any domain $\Omega \subseteq \mathbb C$, let $C_c^\infty(\Omega)$ denote the space of compactly supported smooth test functions, and let $\mathcal D'(\Omega)$ be its dual space, i.e., the space of distributions on $\Omega$.

For $T \in \mathcal D'(\mathbb C)$ and $\psi \in C_c^\infty(\mathbb C)$, we write $\langle T,\psi\rangle$ for the action of the distribution $T$ on the test function $\psi$. If $u \in L^1_{\mathrm{loc}}(\mathbb C)$, then it defines a regular distribution $T_u \in \mathcal D'(\mathbb C)$ satisfying $\langle T_u,\psi \rangle=\int_{\mathbb C} u \psi \dd A$ for every $\psi\in C_c^\infty(\mathbb C)$. We then identify $u$ with $T_u$, and write $\langle u,\psi \rangle$ instead of $\langle T_u,\psi \rangle$. The distributional derivatives of $u$ are defined by
\[ \langle D_xu,\psi\rangle:=-\int_{\mathbb C} u\,\partial_x\psi \dd A \qquad \mbox{and} \qquad \langle D_yu,\psi\rangle:=-\int_{\mathbb C} u\,\partial_y\psi \dd A. \]
Similarly, a signed or complex Radon measure $\mu$ defines a distribution $T_\mu \in \mathcal D'(\mathbb C)$ satisfying $\langle \mu,\psi\rangle= \int_{\mathbb C} \psi \dd \mu$, and we identify $\mu$ with $T_\mu$. Such a measure is called finite if its total variation on $\mathbb C$ is finite.

If $f=u+iv\in L^1_{\mathrm{loc}}(\mathbb C;\mathbb C)$, with $u$ and $v$ real-valued, we may view $f$ as the map $(u,v):\mathbb R^2\to\mathbb R^2$ and write
\[ Df=
    \begin{pmatrix}
        D_xu & D_yu \\
        D_xv & D_yv
    \end{pmatrix} \]
for its distributional Jacobian. A real $2\times2$ matrix-valued Radon measure $M=(M_{jk})$ is a matrix whose entries are signed Radon measures. Let $I$ be the $2 \times 2$ identity matrix. For $c \in \mathbb R$, we write the inequality $M \ge cI \dd A$ if $v^TMv \ge c|v|^2 \dd A$ as signed Radon measures for every $v \in \mathbb R^2$; that is, for every $v \in\mathbb R^2$ and every nonnegative $\psi \in C_c^\infty(\mathbb C)$,
\[ \langle v^TMv,\psi\rangle \ge c|v|^2\int_{\mathbb C}\psi \dd A. \]
We also write $\operatorname{sym} M=\frac{1}{2}(M+M^T)$ for the symmetric part of $M$. Note that since $v^TMv=v^T(\operatorname{sym} M)v$, the preceding inequality depends only on the symmetric part of $M$.

We will also use the standard notation of Wirtinger derivatives, $\bar\partial=\partial_{\bar{z}}=\frac{1}{2}(\partial_x+i\partial_y)$ and $\partial=\partial_z=\frac{1}{2}(\partial_x-i\partial_y)$, which act on functions or distributions. In particular, for $f=u+iv$ we have, distributionally,
\begin{equation}\label{eq:distributional-derivatives}
    \bar\partial f=\frac{1}{2}(D_xu-D_yv)+\frac{i}{2}(D_xv+D_yu) \quad \mbox{and} \quad \partial f=\frac{1}{2}(D_xu+D_yv)+\frac{i}{2}(D_xv-D_yu).
\end{equation}

\subsection{Riesz, Cauchy, and Beurling transforms}
\label{subsec:transforms-prelim}

Let us now review some singular integral operators and their basic properties. We refer readers to~\cite{Stein70,AIM09} for more details.

Let $\mathcal S(\mathbb R^2)$ denote the Schwartz space on $\mathbb R^2$. For $h \in\mathcal S(\mathbb R^2)$, we use the following convention for Fourier transform
\[ \widehat h(\xi,\eta)=\int_{\mathbb R^2} h(x,y)e^{-i(x\xi+y\eta)} \dd x \dd y, \]
and the inverse Fourier transform is given by
\[ h(x,y)=\frac{1}{(2\pi)^2} \int_{\mathbb R^2} \widehat h(\xi,\eta)e^{i(x\xi+y\eta)} \dd \xi \dd \eta. \]

For $h \in \mathcal S(\mathbb R^2)$, the planar Riesz transforms $\mathcal R_x$ and $\mathcal R_y$ are the Fourier multiplier operators such that
\[ \widehat{\mathcal R_x h}(\xi,\eta)=-\frac{i\xi}{(\xi^2+\eta^2)^{1/2}}\widehat h(\xi,\eta) \qquad \mbox{and} \qquad \widehat{\mathcal R_y h}(\xi,\eta)=-\frac{i\eta}{(\xi^2+\eta^2)^{1/2}}\widehat h(\xi,\eta) \]
for $(\xi,\eta) \neq (0,0)$. Equivalently,
\[ \mathcal R_x h(z):=\operatorname{p.v.} \int_{\mathbb C} K_x(z-w)h(w) \dd A(w) \quad \mbox{and} \quad \mathcal R_y h(z):=\operatorname{p.v.} \int_{\mathbb C} K_y(z-w)h(w) \dd A(w), \]
where $K_x(X,Y)=\frac{1}{2\pi}\frac{X}{(X^2+Y^2)^{3/2}}$ and $K_y(X,Y)=\frac{1}{2\pi}\frac{Y}{(X^2+Y^2)^{3/2}}$. Here and below, the principal value integral on $\mathbb C$ shall be understood as the limit of the integral as $\delta \downarrow 0$ when restricting the domain to $\{w \in \mathbb C:|w-z|>\delta\}$.

The transforms $\mathcal R_x$ and $\mathcal R_y$ commute on $\mathcal S$, and their composition $\mathcal T:=\mathcal R_x \mathcal R_y=\mathcal R_y \mathcal R_x$ has Fourier multiplier $\widehat{\mathcal T h}(\xi,\eta)=-\frac{\xi\eta}{\xi^2+\eta^2} \widehat h(\xi,\eta)$. Since $\mathcal T=\partial_x \partial_y (-\Delta)^{-1}$ and the fundamental solution of $-\Delta$ in the plane is $-(2\pi)^{-1} \log |z|$, the principal-value kernel of $\mathcal T$ is
\begin{equation}\label{eq:mixed-riesz-kernel-prelim}
    K_{\mathcal T}(X,Y)=\frac{1}{\pi}\frac{XY}{(X^2+Y^2)^2},
\end{equation}
and thus
\[ \mathcal T h(z)=\operatorname{p.v.} \int_{\mathbb C} K_{\mathcal T}(z-w)h(w) \dd A(w). \]

For a compactly supported finite complex Radon measure $\sigma$, its Cauchy transform is the locally integrable distribution $\mathcal C\sigma=E*\sigma$, where $E(z)=1/(\pi z)$. The local integrability follows from $1/|z| \in L^1_{\rm loc}(\mathbb C)$ and Fubini's theorem. At every point where the integral is absolutely convergent,
\[ \mathcal C\sigma(z)=\frac{1}{\pi} \int_{\mathbb C}\frac{1}{z-w} \dd \sigma(w). \]
Since $\bar\partial E=\delta_0$, we have $\bar\partial \mathcal C \sigma=\sigma$.

For $h \in \mathcal S(\mathbb C)$, the Beurling transform is defined as
\[ \mathcal Bh(z)=-\frac1\pi\operatorname{p.v.} \int_{\mathbb C}\frac{h(w)}{(z-w)^2} \dd A(w). \]
Since $\partial E=-1/(\pi z^2)$ in the principal-value sense, for $q>2$ and $h \in L_c^q(\mathbb C)$, we have $\bar\partial \mathcal C(h \dd A)=h \dd A$ and $\partial \mathcal C(h \dd A)=(\mathcal B h) \dd A$; see, e.g.,~\cite[Theorem 4.3.10]{AIM09}.

For $(\xi,\eta) \neq (0,0)$, the Fourier multiplier of $\mathcal B$ is $\frac{\xi-i\eta}{\xi+i\eta}=\frac{\xi^2-\eta^2}{\xi^2+\eta^2}-2i\frac{\xi\eta}{\xi^2+\eta^2}$. Consequently,
\begin{equation}\label{eq:beurling-riesz}
    \mathcal B=\mathcal R_y^2-\mathcal R_x^2+2i \mathcal R_x \mathcal R_y \qquad \mbox{and} \qquad \mathcal R_x^2+\mathcal R_y^2=-\operatorname{id}.
\end{equation}

We shall use the following standard facts. For every $1<p<\infty$, the Riesz transforms $\mathcal R_x$ and $\mathcal R_y$ extend to bounded operators on $L^p(\mathbb C)$, and consequently so do all their finite compositions; see~\cite[Chapter II, Theorem 3, and Chapter III, Section 1]{Stein70}. The Beurling transform $\mathcal B$ also extends to a bounded operator on $L^p(\mathbb C)$ for every $1<p<\infty$; see~\cite[Theorem 4.5.3]{AIM09}.

Moreover, $\mathcal R_x$, $\mathcal R_y$, their finite compositions, and $\mathcal B$ map $C_c^\infty(\mathbb C)$ into $C^\infty(\mathbb C)\cap L^\infty(\mathbb C)$. Indeed, these operators commute with distributional derivatives, and their $L^p$-boundedness implies that, for every $\psi \in C_c^\infty(\mathbb C)$, the image of $\psi$ under any one of these operators belongs to $W^{k,p}(\mathbb C)$ for every $k \ge 0$ and every $1<p<\infty$. The asserted smoothness and boundedness then follow from Sobolev embedding.

Finally, if $q>2$ and $h \in L_c^q(\mathbb C)$, then the Cauchy transform $\mathcal C(h \dd A)$ has a H\"older continuous representative of exponent $1-2/q$; see~\cite[Theorem 4.3.13]{AIM09}.

\subsection{Criteria for conformal non-removability}
\label{subsec:criterion}

We now isolate the analytic mechanism underlying both non-removability constructions. The argument is inspired by Kaufman's construction~\cite{Kaufman1986}.

\begin{proposition}\label{prop:criterion}
Let $f:\mathbb C \to \mathbb C$ be continuous, and suppose that, in the sense of distributions,
\begin{equation}\label{eq:criterion-derivatives}
    \bar\partial f=\nu, \qquad \partial f=\nu+g \dd A,
\end{equation}
where $\nu$ is a nonzero finite positive measure with compact support, $g \in L_{\rm loc}^1(\mathbb C)$ and $\operatorname{Re} g \in L^\infty(\mathbb C)$. For any $\epsilon>0$ such that $\epsilon\|\operatorname{Re} g\|_{L^\infty(\mathbb C)}<1$, write $c_0:=1-\epsilon\|\operatorname{Re} g\|_{L^\infty(\mathbb C)}>0$.
Then $F(z):=z+\epsilon f(z)$ is a homeomorphism of $\mathbb C$ onto itself and satisfies
\begin{equation}\label{eq:strong-monotonicity}
    (F(z)-F(w)) \cdot (z-w) \ge c_0|z-w|^2 \qquad \mbox{for } z,w\in\mathbb C.
\end{equation}
Moreover, $F$ extends to a homeomorphism of $\widehat{\mathbb C}$, is conformal at every point of
$\widehat{\mathbb C}\setminus\supp\nu$, and is not a
M\"obius transformation.
\end{proposition}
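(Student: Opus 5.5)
The plan is to first compute the symmetric part of the distributional Jacobian of $F$, then deduce strong monotonicity by integrating along segments after mollification, and finally derive the topological and conformal conclusions from \eqref{eq:strong-monotonicity}.

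\textbf{Step 1: the symmetric part of $DF$.} From \eqref{eq:distributional-derivatives}, with $f=u+iv$, we have $D_xu=\operatorname{Re}(\partial f+\bar\partial f)$ and $D_yv=\operatorname{Re}(\partial f-\bar\partial f)$. The off-diagonal entries satisfy $D_yu+D_xv=2\operatorname{Im}\bar\partial f$. Inserting \eqref{eq:criterion-derivatives} gives
\[ D_xu=2\nu+\operatorname{Re}g\,\dd A, \qquad D_yv=\operatorname{Re}g\,\dd A, \qquad D_yu+D_xv=0, \]
since $\nu$ is real. Hence $\operatorname{sym}Df=\operatorname{diag}(2\nu,0)+\operatorname{Re}g\, I\,\dd A\ge -\|\operatorname{Re}g\|_{L^\infty}I\,\dd A$. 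The imaginary part of $g$ drops out here, as does the antisymmetric part. Therefore $\operatorname{sym}DF\ge c_0 I\,\dd A$.

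\textbf{Step 2: monotonicity.} Mollify: $F_\delta=F*\rho_\delta$ is smooth. Since the matrix inequality is preserved under convolution with nonnegative kernels, $e^T DF_\delta(\zeta)e\ge c_0$ pointwise for unit vectors $e$. Integrating along the segment from $w$ to $z$ with $e=(z-w)/|z-w|$ gives $(F_\delta(z)-F_\delta(w))\cdot(z-w)\ge c_0|z-w|^2$. Because $f$ is continuous, $F_\delta\to F$ locally uniformly, which yields \eqref{eq:strong-monotonicity}.

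\textbf{Step 3: topological consequences.} By Cauchy--Schwarz, \eqref{eq:strong-monotonicity} gives $|F(z)-F(w)|\ge c_0|z-w|$. So $F$ is injective, its inverse is Lipschitz, and $F$ is proper. By invariance of domain, $F(\mathbb C)$ is open. Properness and the lower Lipschitz bound make $F(\mathbb C)$ closed. Hence $F$ is a homeomorphism of $\mathbb C$. Setting $F(\infty)=\infty$ extends it continuously to $\widehat{\mathbb C}$, since $|F(z)|\to\infty$.

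\textbf{Step 4: conformality.} On $\mathbb C\setminus\supp\nu$ we have $\bar\partial F=\epsilon\nu=0$. By Weyl's lemma $F$ is holomorphic there, and being injective, it is conformal.

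Near $\infty$ I would use that $\nu$ has compact support. Then $f-\mathcal C\nu$ is entire, because $\bar\partial(f-\mathcal C\nu)=0$. I also expect $f-\mathcal C\nu$ to be bounded or affine. To see this, note that $F$ is injective and entire outside a compact set, so $F$ has a removable singularity or a simple pole at $\infty$. Concretely, an injective holomorphic map on a punctured neighbourhood of $\infty$ that tends to $\infty$ has at most a simple pole, by Casorati--Weierstrass and injectivity. So $F$ is conformal at $\infty$.

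\textbf{Step 5: $F$ is not Möbius.} If $F$ were Möbius, fixing $\infty$, it would be affine, so $\bar\partial F=0$ everywhere. But $\bar\partial F=\epsilon\nu\ne0$, a contradiction.

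The main obstacle I expect is Step 2: justifying that the distributional inequality $\operatorname{sym}DF\ge c_0I$ passes to mollifiers and then to the pointwise segment inequality. This should be handled by pairing the mollified derivative with $\rho_\delta(\zeta-\cdot)\ge0$.
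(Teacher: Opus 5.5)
Your proposal is correct and follows the paper's proof essentially step for step. The steps are the same: the computation showing $\operatorname{sym} DF=(1+\epsilon\operatorname{Re} g)I\,\dd A+2\epsilon\nu\, e_1\otimes e_1\ge c_0 I\,\dd A$, then mollification and integration along segments, then the co-Lipschitz and invariance-of-domain argument, Weyl's lemma, and finally the contradiction $\bar\partial F=\epsilon\nu\neq0$. Your handling of $\infty$ (injectivity forces a simple pole) is equivalent to the paper's use of $G(\zeta)=1/F(1/\zeta)$. The aside about $f-\mathcal C\nu$ being entire, and your unproven expectation that it is bounded or affine, is not needed and can be dropped.
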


\begin{proof}
Write $f=u+iv$ and $g=\alpha+i\beta$, where $\alpha=\operatorname{Re} g$. By the identities for distributional derivatives~\eqref{eq:distributional-derivatives}, the conditions~\eqref{eq:criterion-derivatives} are equivalent to
\[ D_xu=2\nu+\alpha \dd A, \qquad D_yu=-\beta \dd A, \qquad D_xv=\beta \dd A, \qquad D_yv=\alpha \dd A. \]
Equivalently, the distributional Jacobian of $f$ is the matrix-valued Radon measure
\begin{equation}\label{eq:matrix-Df}
    Df=
    \begin{pmatrix}
        2\nu+\alpha \dd A & -\beta \dd A \\
        \beta \dd A & \alpha \dd A
    \end{pmatrix}.
\end{equation}
Let $e_1=(1,0)^T$. Since $DF=I \dd A+\epsilon Df$, taking symmetric parts in~\eqref{eq:matrix-Df} gives
\begin{equation}\label{eq:sym-lower}
    \operatorname{sym} DF=(1+\epsilon\alpha)I \dd A +2\epsilon\nu \, e_1\otimes e_1 \ge c_0 I \dd A,
\end{equation}
where we used $1+\epsilon \alpha \ge c_0$ almost everywhere and the positivity of $\nu$.

Choose a nonnegative function $\eta\in C_c^\infty(\mathbb C)$ satisfying $\supp \eta \subset B(0,1)$ and $\int_{\mathbb C} \eta \dd A=1$, and let $\eta_\delta(z)=\delta^{-2}\eta(z/\delta)$ be the standard mollifier. Define $F_\delta(z):=z+\epsilon(f*\eta_\delta)(z)$, then we have $DF_\delta=(DF)*\eta_\delta$. Convolution with nonnegative $\eta_\delta$ preserves the inequality~\eqref{eq:sym-lower}, which implies that $\operatorname{sym} DF_\delta(z)\ge c_0 I$ for all $z \in \mathbb C$.

For $z,w \in \mathbb C$, write $v=z-w$, then
\[ (F_\delta(z)-F_\delta(w)) \cdot v=\int_0^1 v^T DF_\delta(w+tv) v \dd t = \int_0^1 v^T (\operatorname{sym} DF_\delta)(w+tv)v \dd t \ge c_0|v|^2. \]
Because $f$ is continuous, $F_\delta \to F$ locally uniformly. Letting $\delta\downarrow0$ therefore gives
\eqref{eq:strong-monotonicity}.

By the Cauchy--Schwarz inequality,~\eqref{eq:strong-monotonicity} implies
\begin{equation}\label{eq:colipschitz}
    |F(z)-F(w)|\ge c_0|z-w| \qquad \mbox{for } z,w\in\mathbb C,
\end{equation}
so $F$ is injective, and its inverse on its image is continuous.

By invariance of domain, $F(\mathbb C)$ is open. Note that $F(\mathbb C)$ is also closed: if $F(z_j)$ converges in $\mathbb C$, then
\eqref{eq:colipschitz} shows that $(z_j)$ is a Cauchy sequence.
Writing $z=\lim_jz_j$ and using the continuity of $F$, we obtain $\lim_j F(z_j)=F(z) \in F(\mathbb C)$. Thus $F(\mathbb C)$ is a nonempty subset of the connected space $\mathbb C$ that is both open and closed, and hence $F(\mathbb C)=\mathbb C$. This proves that $F$ is a homeomorphism of $\mathbb C$ onto itself.

Moreover, taking $w=0$ in~\eqref{eq:colipschitz} gives $|F(z)| \ge |F(z)-F(0)|-|F(0)| \ge c_0 |z|-|F(0)|$. Hence $|F(z)| \to \infty$ as $|z| \to \infty$. Therefore, setting $F(\infty)=\infty$ extends $F$ to a homeomorphism of $\widehat{\mathbb C}$.

Since $\bar\partial F=\epsilon \nu$ vanishes on $\mathbb C \setminus \supp \nu$, by Weyl's lemma~\cite[Lemma A.6.10]{AIM09}, it follows that $F$ is holomorphic on $\mathbb C \setminus \supp \nu$. Moreover, $F$ is injective, so its complex derivative does not vanish there. Hence $F$ is conformal on $\mathbb C \setminus \supp \nu$. At infinity, in the coordinate $\zeta=1/z$, the function $G(\zeta):=\frac{1}{F(1/\zeta)}$ is holomorphic in a punctured neighborhood of 0 and satisfies $G(\zeta) \to 0$ as $\zeta \to 0$. Therefore, $G$ extends holomorphically to 0. The extension is injective near 0, and hence $G'(0) \neq 0$. Therefore $F$ is also conformal at infinity.

Finally, if $F$ were a M\"obius map, then, since $F$ fixes $\infty$, its restriction to $\mathbb C$ would be holomorphic. But this contradicts $\bar\partial F=\epsilon\nu \neq 0$. This completes the proof.
\end{proof}

The constructions for the Brownian graph and H\"older graphs naturally produce a real-valued function with a positive singular horizontal derivative and an absolutely continuous vertical derivative. The following proposition shows that the remaining estimate needed for Proposition~\ref{prop:criterion} is a boundedness condition on the mixed Riesz transform of the vertical derivative.

\begin{proposition}\label{prop:criterion+}
Let $q>2$, and let $v \in C_c(\mathbb C)$ be real-valued. Suppose that, in the sense of distributions,
\begin{equation}\label{eq:derivative-v}
    D_x v=\mu+a\dd A, \qquad D_y v=b\dd A,
\end{equation}
where $\mu$ is a nonzero finite positive Radon measure with compact support, and the functions $a \in C_c^\infty(\mathbb C)$ and $b \in L_c^q(\mathbb C)$ are real-valued. Suppose further that $\mathcal R_x\mathcal R_y b \in L^\infty(\mathbb C)$. Then there exists a continuous function $f:\mathbb C \to \mathbb C$ with $\bar \partial f = \mu$ such that, for every sufficiently small $\epsilon>0$, the map $z\mapsto z+\epsilon f(z)$ extends to a non-M\"obius homeomorphism of $\widehat{\mathbb C}$ that is conformal on $\widehat{\mathbb C} \setminus \supp\mu$.
\end{proposition}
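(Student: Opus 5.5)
The plan is to build $f$ explicitly from $v$, $a$, $b$ so that $\bar\partial f=\mu$ holds by construction, and then to check the hypotheses of Proposition~\ref{prop:criterion} with $\nu=\mu$. The starting point is the identity $D_x=2\bar\partial-iD_y$ on distributions. Combined with~\eqref{eq:derivative-v}, it gives
\[ \mu=D_xv-a\dd A=2\bar\partial v-ib\dd A-a\dd A. \]
This suggests defining
\[ f:=2v-i\,\mathcal C(b\dd A)-\mathcal C(a\dd A), \]
taking continuous representatives of the Cauchy transforms.

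First I will check continuity and the $\bar\partial$ equation.
\begin{itemize}
\item $v\in C_c(\mathbb C)$ by assumption.
\item $\mathcal C(b\dd A)$ has a H\"older continuous representative because $b\in L^q_c$ with $q>2$.
\item $\mathcal C(a\dd A)$ is continuous, indeed smooth, since $a\in C_c^\infty$.
\end{itemize}
By the identities $\bar\partial\mathcal C(h\dd A)=h\dd A$ from Section~\ref{subsec:transforms-prelim}, we get $\bar\partial f=2\bar\partial v-ib-a=\mu$.

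Next I will compute $\partial f$. Using $2\partial=D_x-iD_y$ and~\eqref{eq:derivative-v}, we have $2\partial v=\mu+a\dd A-ib\dd A$. Together with $\partial\mathcal C(h\dd A)=(\mathcal Bh)\dd A$ for $h\in L^q_c$, this gives
\[ \partial f=\mu+g\dd A,\qquad g:=a-ib-i\mathcal Bb-\mathcal Ba. \]
Local integrability of $g$ is clear: $a$ is smooth, $b\in L^q$, $\mathcal Bb\in L^q$ by $L^q$-boundedness of $\mathcal B$, and $\mathcal Ba\in L^\infty$.

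It remains to show $\operatorname{Re} g\in L^\infty$, which I expect to be the only genuinely delicate point. The term $\mathcal Bb$ itself is only in $L^q$, so we must see that its real contribution is controlled by the hypothesis on $\mathcal R_x\mathcal R_y b$. By~\eqref{eq:beurling-riesz}, $\mathcal B=(\mathcal R_y^2-\mathcal R_x^2)+2i\mathcal T$. The Riesz transforms have real kernels, so for real $b$ both $(\mathcal R_y^2-\mathcal R_x^2)b$ and $\mathcal Tb$ are real. Hence
\[ \operatorname{Re}(-ib-i\mathcal Bb)=\operatorname{Im}\mathcal Bb=2\mathcal Tb. \]
This step requires the extension of~\eqref{eq:beurling-riesz} from Schwartz functions to $L^q$, which follows by density and $L^q$-boundedness. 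Therefore
\[ \operatorname{Re} g=a-\operatorname{Re}\mathcal Ba+2\mathcal R_x\mathcal R_yb. \]
Each term is bounded: $a\in C_c^\infty$, $\mathcal Ba\in L^\infty$ since $\mathcal B$ maps $C_c^\infty$ into $C^\infty\cap L^\infty$, and $\mathcal R_x\mathcal R_y b\in L^\infty$ by assumption.

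Proposition~\ref{prop:criterion} then applies with $\nu=\mu$, which is nonzero, finite, positive and compactly supported. For every $\epsilon$ with $\epsilon\|\operatorname{Re} g\|_{L^\infty}<1$, it shows that $z\mapsto z+\epsilon f(z)$ extends to a non-M\"obius homeomorphism of $\widehat{\mathbb C}$ that is conformal off $\supp\mu$.
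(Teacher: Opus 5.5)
Your proposal is correct and is essentially the paper's proof. Your $f=2v-i\,\mathcal C(b\dd A)-\mathcal C(a\dd A)$ is exactly the paper's $f=2v-\mathcal C(\rho\dd A)$ with $\rho=a+ib$, and your $g$ and your formula $\operatorname{Re} g=a-\operatorname{Re}\mathcal B a+2\mathcal R_x\mathcal R_y b$ coincide with the paper's via~\eqref{eq:beurling-riesz} (you also spell out the density extension of that identity to $L^q$ and the realness of Riesz transforms of real functions, which the paper leaves implicit).
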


\begin{proof}
    Define $\rho=a+ib$ and $f=2v-\mathcal C(\rho \dd A)$. Since $\rho \in L_c^q(\mathbb C)$ and $q>2$,~\cite[Theorem 4.3.13]{AIM09} implies that $\mathcal C (\rho \dd A) \in C^{0,\alpha}(\mathbb C)$, where $\alpha=1-2/q>0$. In particular, $\mathcal C(\rho \dd A)$ is continuous, so $f$ is continuous.

    The Cauchy-transform identities from Section~\ref{subsec:transforms-prelim} give $\bar\partial f=\mu$ and $\partial f=\mu+g \dd A$, where, by~\eqref{eq:beurling-riesz},
    \[ g=(a-ib)-\mathcal{B}(a+ib)=(a-ib)-(\mathcal R_y^2-\mathcal R_x^2+2i \mathcal R_x \mathcal R_y)(a+ib). \]
    The $L^q$-boundedness of $\mathcal B$ implies $g \in L^q(\mathbb C) \subset L_{\rm loc}^1(\mathbb C)$. Moreover, since $a$ and $b$ are real-valued, taking the real part gives $\operatorname{Re} g = (\operatorname{id}-\mathcal R_y^2+\mathcal R_x^2)a+ 2 \mathcal R_x \mathcal R_y b$. The first term is bounded since $a \in C_c^\infty(\mathbb C)$, and the second term is bounded by assumption. Thus $\operatorname{Re} g \in L^\infty(\mathbb C)$, and the conclusion now follows from Proposition~\ref{prop:criterion}.
\end{proof}

\begin{remark}\label{rem:identification}
    The function $f$ constructed in the proof is in fact a continuous representative of the Cauchy transform of $\mu$, although this identification was not needed.
    
    Indeed,~\eqref{eq:derivative-v} implies $2\bar\partial v=\mu+\rho \dd A$. Recall that $E(z)=1/(\pi z)$ satisfies $\bar\partial E=\delta_0$. Since $v$ is compactly supported, the standard differentiation rule for convolution of distributions gives $\mathcal C(\bar\partial v)=E*(\bar\partial v)=(\bar\partial E)*v=v$. Therefore $2v=\mathcal C \mu+\mathcal C(\rho \dd A)$, and hence $f=\mathcal C\mu$ in the sense of distributions.
\end{remark}

\section{Multi-scale structure of the Brownian graph}
\label{sec:prob-input}

Throughout this section and Section~\ref{sec:construct-measure}, unless explicitly stated otherwise, we work with a standard Brownian motion $(B_t)_{t \ge 0}$. We fix the exponent $q>2$ that will be used in the analytic construction.

The purpose of this section is to extract the pathwise structure needed in Section~\ref{sec:construct-measure}. Following~\cite{DM26}, we first organize the Brownian path into upboxes at nested spatial scales and obtain a uniform supply of descendants. We then prove a separate upper bound on the number of upsteps that can occur during a finite time interval. Finally, in Section~\ref{subsec:construct-tree} we use Lemma~\ref{lem:row}, together with a fast-duration condition, to construct the regular multi-scale tree on which the analytic argument will be based.

\subsection{Upboxes and descendants}
\label{subsec:prob-setup}

We begin with the stopping-time decomposition of the Brownian path that will define the vertices and edges of the tree. At each scale $H$, consecutive $H$-steps form a global partition of time; passing from $H$ to $H/K$ therefore gives a natural notion of descendants. We will need two facts about this decomposition: each upbox typically has many upward descendants in every relevant row, and, conditional on the sequence of step directions, the corresponding Brownian path segments are independent.

Let $\tau_1=\inf\{t>0:|B_t|=1\}$ and let $\mathbb P^+$ be the law of $(B_t)_{t \in[0,\tau_1]}$ conditional on $B_{\tau_1}=1$.

For $H>0$, define $T_{H,0}=0$ and, for each $k \ge 0$,
\begin{equation}\label{eq:global-partition}
    T_{H,k+1}=\inf\{t>T_{H,k}:|B_t-B_{T_{H,k}}|=H \}.
\end{equation}

For $H>0$, an \emph{$H$-step} refers to an interval $[T_{H,k},T_{H,k+1}]$ for some $k \ge 0$. It is called an \emph{$H$-upstep} if $B_{T_{H,k+1}}-B_{T_{H,k}}=H$. For an $H$-upstep $[T_{H,k},T_{H,k+1}]$, the rectangle $P:=[T_{H,k},T_{H,k+1}] \times [B_{T_{H,k}},B_{T_{H,k+1}}]$ is called an \emph{$H$-upbox}\footnote{Our definition of upboxes is slightly different from that in~\cite{DM26} as we do not shift the lower boundary downward.}. We use the terms \emph{upstep} and \emph{upbox} when the scale need not be specified.

For an $H$-upbox $P=[T_{H,k},T_{H,k+1}] \times [B_{T_{H,k}},B_{T_{H,k+1}}]$, we call $[T_{H,k},T_{H,k+1}]$ the time interval of the upbox and denote it by $I(P)$; we call $[B_{T_{H,k}},B_{T_{H,k+1}}]$ the \emph{height range} of the upbox and denote it by $J(P)$. By slightly abusing notation, we also refer to $J(P)$ as the height range of the corresponding upstep.
Conditional on $B_{T_{H,k}}$ and $B_{T_{H,k+1}}$ (but not on the value of $T_{H,k+1}-T_{H,k}$), the process $(B_t)_{t \in [T_{H,k},T_{H,k+1}]}$ has the law of a Brownian motion started at $B_{T_{H,k}}$ that is conditioned to hit $B_{T_{H,k}}+H$ before $B_{T_{H,k}}-H$ and is stopped once this occurs. Note that this is a translated and rescaled version of $\mathbb P^+$. Probability estimates for a single upbox below refer to this upward-exit conditioning; any additional conditions imposed during the tree construction will be stated explicitly.

Let $K$ be a positive integer. For $H>0$ and $h=H/K$, it is clear that all $H$-steps $\{[T_{H,k},T_{H,k+1}]\}_{k \ge 0}$ form a partition of $[0,\infty)$ (except that those intervals may intersect at the endpoints), and all $h$-steps $\{[T_{h,k'},T_{h,k'+1}]\}_{k' \ge 0}$ give a finer partition. We thus define the descendant relation between an $H$-upbox and an $h$-upbox according to their time intervals.
Specifically, let $P=[T_{H,k},T_{H,k+1}] \times [b,b+H]$ be an $H$-upbox, where $b=B_{T_{H,k}} \in H\mathbb Z$ and $b+H=B_{T_{H,k+1}}$.
For an integer $j$, the \emph{$K$-descendants in row $j$} of $P$ are defined as the collection of the $h$-upboxes $Q$ of the form $Q=[T_{h,k'},T_{h,k'+1}] \times [b+jh,b+(j+1)h]$ for some $k' \ge 0$, where $b+jh=B_{T_{h,k'}}$ and $b+(j+1)h=B_{T_{h,k'+1}}$, and $[T_{h,k'},T_{h,k'+1}] \subseteq [T_{H,k},T_{H,k+1}]$.

The first ingredient is a uniform lower bound on the number of descendants available in each relevant row of a given upbox. This is the form of the upcrossing estimate from~\cite{DM26} that will be used in the branching construction.

\begin{lemma}\label{lem:row}
For every $\eta \in (0,1)$ there exist a rational number $r \in (0,1/8)$, $c \in (0,1)$, and a positive integer $K_0$, which depend only on $\eta$, such that for every integer $K \ge K_0$ with $rK \in \mathbb{N}$, the following holds with $h=H/K$. For an $H$-upbox $P=[T_{H,k},T_{H,k+1}] \times [b,b+H]$, the event that
\[ P \mbox{ has at least } cK \mbox{ many } K\mbox{-descendants in row } j \mbox{ for each } j \in \mathcal J_K:=\{-3rK,-3rK+1,\ldots,K-rK-1\} \]
occurs with conditional probability at least $1-\eta$.
\end{lemma}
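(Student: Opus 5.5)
By Brownian scaling and translation invariance we may take $H=1$, $b=0$ and $T_{H,k}=0$. Then $(B_t)_{t\in[0,\tau_1]}$ has law $\mathbb P^+$, and the $h$-steps inside $[0,\tau_1]$, with $h=1/K$, are exactly the $h$-steps of the global partition restarted at time $0$. This holds because $1\in h\mathbb Z$, so $\tau_1$ is itself one of the times $T_{h,k'}$. A $K$-descendant in row $j$ is then an $h$-upstep of the discrete walk $S_m:=B_{T_{h,m}}/h$ that goes from $j$ to $j+1$ before the walk first exits $(-K,K)$ at $+K$.

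Under the unconditioned law, $(S_m)$ is a simple symmetric random walk. Under $\mathbb P^+$ it is an $h$-transform, namely the walk conditioned to hit $K$ before $-K$. The count in row $j$ is therefore the number of upcrossings of the edge $\{j,j+1\}$ by a simple random walk conditioned to exit $(-K,K)$ at $+K$. The rows in $\mathcal J_K$ lie in $[-3rK,(1-r)K)$. With $r<1/8$, every such edge stays at distance at least $rK$ from both endpoints $\pm K$. This margin is what makes the lower bound uniform in $j$.

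The core estimate goes through local times. For $j$ in this range, the number $U_j$ of upcrossings of $\{j,j+1\}$ is comparable to the discrete local time at $j$. Under the conditioned walk, this local time is of order $K$ times a nondegenerate random variable. In the diffusive limit, $U_j/K$ converges jointly in $j/K=x$ to the local time profile $\tfrac12 L^x$ of a Brownian motion started at $0$, conditioned to hit $1$ before $-1$, and stopped at $1$.

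For this conditioned process, the map $x\mapsto L^x$ is continuous and almost surely strictly positive on $[-3r,1-r]$. Positivity holds because the path visits every level of $[0,1)$, because Ray--Knight shows that the local time cannot vanish at an interior level above $\min B$ before time $\tau_1$, and because with probability close to $1$ the path dips below $-3r$ when $r$ is small. Choose $r$ rational and small so that $\mathbb P^+(\min B<-4r)\ge1-\eta/3$. Next choose $c$ so that $\mathbb P^+\bigl(\inf_{x\in[-3r,1-r]}L^x\ge 4c\bigr)\ge1-\eta/3$.

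It remains to transfer this continuum statement to the discrete counts uniformly over all rows. One route is a strong invariance principle, such as Knight's or Révész's theorem on the uniform convergence of random-walk local times: it gives $\sup_j|U_j/K-\tfrac12L^{j/K}|\to0$ in probability, and the conditioning on an event of probability $1/2$ is harmless. Taking $K_0$ large then gives the bound with probability at least $1-\eta$.

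An alternative avoids invariance principles and uses the Ray--Knight description directly. For the conditioned walk, $(U_j)_j$ is a Galton--Watson-type branching chain with geometric offspring, a squared-Bessel analogue. One then shows that a critical branching process started from order $K$ individuals stays above $cK$ for order $K$ generations with high probability, via a martingale maximal inequality.

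The main obstacle is exactly this uniformity over roughly $(1+2r)K$ rows simultaneously, with a linear lower bound $cK$ rather than only a typical value. Near the bottom rows $-3rK$ the local time can become small if the path barely reaches that level, and this is why the reach condition $\min B<-4r$ is needed. The invariance-principle route handles the uniformity cleanly, and that is the route I would try first. This statement is essentially the upcrossing lemma of~\cite{DM26} adapted to the present definition of upboxes.
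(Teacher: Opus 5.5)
Your proposal is correct. Its first step matches the paper, and its core estimate takes a different, self-contained route.

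\textbf{The reduction.} You identify the $K$-descendants of $P$ in row $j$ with upsteps $j\to j+1$ of the embedded walk $S_m=B_{T_{h,m}}/h$ before it exits $(-K,K)$ at $+K$. This is the same observation the paper makes when it notes that each Brownian upcrossing of $[b+jh,b+(j+1)h]$ during $I(P)$ ends with a distinct $h$-upbox in row $j$. Your version actually gives a bijection, while the paper only needs the injection.

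\textbf{The core estimate.} The paper does not prove the uniform lower bound. It quotes \cite[Lemma~2.2]{DM26}, which already gives at least $cK$ upcrossings of every $[j/K,(j+1)/K]$, $j\in\mathcal J_K$, under $\mathbb P^+$, and then concludes by translation and scaling. You derive that bound from classical local-time theory:
\begin{enumerate}[(i)]
\item strict positivity of $x\mapsto L^x_{\tau_1}$ on $(\min B,1)$, via Ray--Knight;
\item a dip of the conditioned path below $-3r$ with high probability, quantitatively $\mathbb P^+(\min B\le-\varepsilon)=(1-\varepsilon)/(1+\varepsilon)$;
\item the uniform convergence $\sup_j|U_j/K-\tfrac12 L^{j/K}_{\tau_1}|\to0$.
\end{enumerate}
The paper's route is a two-line reduction, but it is only as transparent as the cited lemma. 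Yours is self-contained modulo standard theorems. It also explains where the margins in $\mathcal J_K$ come from: the local time vanishes at level $1$ and below $\min B$, which is why $r$ must shrink with $\eta$.

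\textbf{Points to handle when writing it out.}
\begin{enumerate}[(i)]
\item You do not need a R\'ev\'esz-type strong coupling, since the walk is built from $B$ itself. What you need is the uniform-in-level form of L\'evy's downcrossing theorem along the full sequence $h=1/K$, up to the a.s.\ finite time $\tau_1$.
\item Classical statements such as Knight's are along dyadic scales. For arbitrary $K$, either cite a continuous-parameter version, or use convergence in law in the uniform topology together with the portmanteau theorem, since $\inf_{x\in[-3r,1-r]}$ is a continuous functional.
\item Positivity of $L^x$ on $[-3r,1-r]$ holds only on $\{\min B<-3r\}$, not almost surely under $\mathbb P^+$ as one sentence of yours says. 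Your subsequent choice of $r$ handles this correctly.
\item Choose $c$ with a strict margin. The tail bound then only holds with some room below $1-\eta/3$ rather than exactly at it, but your $\eta$-budget has enough slack to absorb this.
\end{enumerate}
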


\begin{proof}
This is essentially~\cite[Lemma~2.2]{DM26}. More precisely, it is shown that for every $\eta \in (0,1)$ there exist a rational number $r \in (0,1/8)$, $c \in (0,1)$, and a positive integer $K_0$, which depend only on $\eta$, such that for every integer $K \ge K_0$ with $rK\in\mathbb{N}$, the following event occurs with probability at least $1-\eta$: the stopped Brownian motion $(B_t)_{t \in [0,\tau_1]}$ conditional on $B_{\tau_1}=1$ makes at least $cK$ upcrossings of every interval of the form
\[ [j/K,(j+1)/K], \quad \mbox{for } j \in \mathcal J_K=\{-3rK,-3rK+1,\ldots,K-rK-1\}. \]

Note that each upcrossing of the interval $[b+jh,b+(j+1)h]$ during the time interval $I(P)$ naturally induces (immediately before this upcrossing ends) an $h$-upbox whose height range is $[b+jh,b+(j+1)h]$, and different upcrossings correspond to different upboxes. The desired claim then follows from translation and Brownian scaling.
\end{proof}

We remind the readers that, however, since $j \in \mathcal J_K$ can be negative, these $K$-descendants (viewed as closed subsets of $\mathbb R^2$) in Lemma~\ref{lem:row} are not necessarily contained in $P$.

To iterate the preceding estimate down the tree, we also need independence between the finer-scale path segments once their directions have been specified. This follows immediately from the strong Markov property; for completeness, we record the precise conditional independence statement below.

\begin{lemma}\label{lem:conditional-independence}
Fix an integer $K \ge2$ and an $H$-upbox $P=[T_{H,k},T_{H,k+1}] \times [b,b+H]$, and set $h=H/K$. There is an index $r_0$ such that $T_{h,r_0}=T_{H,k}$, and we write $\sigma_\ell=T_{h,r_0+\ell}$ for $\ell \ge 0$. Let $N_P$ be the first index for which $\sigma_{N_P}=T_{H,k+1}$. Write $\epsilon_\ell=h^{-1}(B_{\sigma_{\ell+1}}-B_{\sigma_\ell}) \in \{-1,1\}$.

Let $\mathcal S_P$ be the sigma-algebra generated by $N_P$ and $\epsilon_0,\ldots,\epsilon_{N_P-1}$. Conditional on $\mathcal S_P$, the rescaled stopped Brownian path segments together with their durations,
\[ \left(h^{-2}(\sigma_{\ell+1}-\sigma_\ell),(h^{-1}(B_{\sigma_\ell+h^2t}-B_{\sigma_\ell}))_{0\le t\le h^{-2}(\sigma_{\ell+1}-\sigma_\ell)}\right), \quad 0\le\ell<N_P,
\]
are independent. Moreover, each rescaled Brownian path segment has law $\mathbb P^+$ or its vertical reflection depending on its sign.
\end{lemma}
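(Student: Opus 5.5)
The plan is to derive the statement from the strong Markov property applied successively at the stopping times $\sigma_\ell$, combined with the classical fact that a Brownian excursion out of a symmetric interval, conditioned on its exit side, is independent of everything prior to its start.

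\textbf{Step 1: reduction to an unconditioned description.} The times $\sigma_\ell=T_{h,r_0+\ell}$ are stopping times, and $T_{H,k}$ is itself one of the $h$-step times, since $H=Kh$ and every $H$-step endpoint lies in $b+h\mathbb Z$. Define the rescaled segments
\[ W_\ell:=\bigl(h^{-2}(\sigma_{\ell+1}-\sigma_\ell),\,(h^{-1}(B_{\sigma_\ell+h^2t}-B_{\sigma_\ell}))_{0\le t\le h^{-2}(\sigma_{\ell+1}-\sigma_\ell)}\bigr), \qquad \ell\ge0. \]
By the strong Markov property at $\sigma_\ell$ and Brownian scaling, $W_\ell$ is independent of $\mathcal F_{\sigma_\ell}$ and has the law of $(\tau_1,(B_t)_{t\le\tau_1})$ under the unconditioned measure. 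Iterating this shows that $W_0,W_1,\dots$ form an i.i.d.\ sequence, jointly independent of $\mathcal F_{T_{H,k}}$. Each sign satisfies $\epsilon_\ell=\pm1$ according to the endpoint of the path component of $W_\ell$. By symmetry, $\epsilon_\ell$ is a fair sign, and conditional on $\epsilon_\ell=1$ the segment $W_\ell$ has law $\mathbb P^+$ (jointly with its duration); conditional on $\epsilon_\ell=-1$ it has the reflected law.

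\textbf{Step 2: identify $N_P$ as a function of the signs.} The quantity $N_P$ is the first $n$ with $|\sum_{\ell<n}\epsilon_\ell|=K$, because $T_{H,k+1}$ is the first time the path moves $H$ away from $B_{T_{H,k}}$, and this can only happen at an $h$-step endpoint. Thus $N_P$ is a stopping time for the sign sequence, and $\mathcal S_P\subseteq\sigma(\epsilon_0,\epsilon_1,\dots)$. The statement of the lemma also implicitly conditions on $P$ being an upbox, i.e.\ on $\sum_{\ell<N_P}\epsilon_\ell=K$. This event is measurable with respect to the signs, so it is absorbed into $\mathcal S_P$, and independence from $\mathcal F_{T_{H,k}}$ covers any conditioning on the past.

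\textbf{Step 3: compute the conditional law.} Fix $n$ and a sign vector $(s_0,\dots,s_{n-1})$ that is compatible with $\{N_P=n\}$, and take bounded measurable functionals $\Phi_0,\dots,\Phi_{n-1}$. The event $\{N_P=n,\ \epsilon_\ell=s_\ell\ \forall\ell<n\}$ equals $\{\epsilon_\ell=s_\ell,\ \ell<n\}$, because for a compatible vector the value of $N_P$ is determined by the first $n$ signs. Hence the i.i.d.\ structure from Step 1 gives
\[ \mathbb E\Bigl[\prod_{\ell<n}\Phi_\ell(W_\ell)\,\Big|\,\epsilon_\ell=s_\ell,\ \ell<n\Bigr]=\prod_{\ell<n}\mathbb E\bigl[\Phi_\ell(W_0)\,\big|\,\epsilon_0=s_\ell\bigr]. \]
This factorized form is exactly the claimed conditional independence, with marginals $\mathbb P^+$ or its reflection. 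Summing over atoms then handles conditioning on $\mathcal S_P$, which is generated by a countable partition.

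The main obstacle is bookkeeping rather than depth, namely making sure that conditioning on $N_P$ does not bias the segments. This is resolved in Step 2, where $N_P$ is shown to be a function of the first $N_P$ signs alone, together with the atom-by-atom computation in Step 3.
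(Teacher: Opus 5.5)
Your proposal is correct and follows essentially the same route as the paper: the strong Markov property at the successive times $\sigma_\ell$ gives i.i.d.\ unconditioned exit segments, $N_P$ is a function of the sign sequence, and conditioning on the signs together with Brownian scaling identifies each segment's law as $\mathbb P^+$ or its reflection. Your atom-by-atom computation, and your remark that the upward-exit conditioning is absorbed into $\mathcal S_P$, spell out details that the paper's short proof leaves implicit.
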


\begin{proof}
We first work under the unconditioned Brownian law after $T_{H,k}$, and apply the strong Markov property successively at the stopping times $\sigma_\ell$. Before their signs are revealed, the stopped increments are independent Brownian paths run until first exit from $(-h,h)$. On the event $\{N_P=n\}$, these $n$ stopped Brownian path segments are conditionally independent given the sign sequence $(\epsilon_0,\ldots,\epsilon_{n-1})$. Since the index $N_P$ is determined by the sign sequence, conditional independent still holds given $\mathcal S_P$. After translation and Brownian scaling, conditioning an increment on its exit sign gives $\mathbb P^+$ for a positive sign and the reflected law for a negative sign.
\end{proof}

\subsection{Uniform counting of upsteps}
\label{subsec:uniform-bound}

The aim of this subsection is to prove an almost-sure upper bound on the number of $H$-upsteps, uniformly along a geometric sequence of scales, over all height ranges and all time intervals of a prescribed minimum length. This estimate will be used in Section~\ref{subsec:mixed-riesz} to control the number of level-$n$ tree upboxes that can contribute near a given point, which is the key input for the mixed Riesz transform estimate.

For $H>0$, an integer $\ell$, and a time interval $W \subseteq [0,\infty)$, let $N^+_{H,\ell}(W)$ be the number of $H$-upsteps $[T_{H,k},T_{H,k+1}]$ of $(B_t)_{t \ge 0}$ that intersect $W$ and have height range $[\ell H,(\ell+1)H]$.

\begin{lemma}\label{lem:uniform-count}
Fix $H_0>0$ and $ K \ge 2$, and set $H_n=H_0K^{-n}$. Almost surely, there exists a finite random constant $C_B<\infty$ which depends only on $H_0$, $K$, and $(B_t)_{t \ge 0}$, such that, for every $n$, every integer $\ell$, and every time interval $W \subseteq [0,1]$ of length $R \ge H_n^2$, we have
\begin{equation}\label{eq:uniform-count}
    N_{H_n,\ell}^+(W) \le C_B\sqrt{n+1} \frac{\sqrt{R}}{H_n}.
\end{equation}
\end{lemma}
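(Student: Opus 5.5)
Our plan is to prove a tail bound for a single discretized window and then apply a union bound over all scales, levels and windows, finishing with Borel--Cantelli. First we reduce to finitely many windows per scale. Fix $n$ and write $h=H_n$. Any $W\subseteq[0,1]$ of length $R\ge h^2$ is contained in the union of at most two consecutive dyadic-type windows $[mR',(m+2)R']$. Here $R'=h^2 4^{i}$ is chosen with $R'\le R<4R'$ and $i\ge0$. So it suffices to bound $N^+_{h,\ell}([mR',(m+2)R'])$, uniformly over $i\le \log_4(h^{-2})+1$, $0\le m\le R'^{-1}$, and all $\ell$. For fixed $n$ the number of pairs $(i,m)$ is at most $O(h^{-2})=O(K^{2n})$. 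Only levels $\ell$ with $|\ell h|\le \sup_{[0,1]}|B|+1$ matter, and there are at most $O(K^{n}(1+\sup|B|))$ of them. Since $\sup_{[0,1]}|B|$ is a.s.\ finite, we may restrict to $|\ell|\le K^{n} M$ for a fixed $M$ and let $M\to\infty$ at the end. The $h$-steps meeting a window of length $2R'$ consist of those lying inside it plus at most two boundary steps, so the boundary contributes only $O(1)\le O(\sqrt{R'}/h)$.

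The key step is a single-window tail estimate. Fix a deterministic interval $W_0=[s,s+2R']$, let $L=\sqrt{R'}/h\ge1$, and aim for
\[
\mathbb P\bigl(N^+_{h,\ell}(W_0)\ge \lambda L\bigr)\le C e^{-c\lambda^2}\qquad\text{uniformly in }\ell,s,
\]
for, say, $\lambda\ge2$. Every $h$-upstep with height range $[\ell h,(\ell+1)h]$ occurring after time $s$ is, apart from at most one, a complete upcrossing of $[\ell h,(\ell+1)h]$ performed by the post-$s$ path $(B_{s+t})_{t\ge0}$. Let $\tilde W=W_0$. By the strong Markov property applied at the first hitting of level $\ell h$ after $s$, it suffices to handle a Brownian motion started at $\ell h$ and to count its upcrossings of $[\ell h,(\ell+1)h]$ by time $2R'$. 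Each upcrossing is followed by a return to $\ell h$, and the return times from $(\ell+1)h$ to $\ell h$ are i.i.d.\ with the law of the hitting time of $h$. That hitting time equals $h^2/N^2$ in law, with $N$ standard Gaussian. Having $N\ge \lambda L$ upcrossings requires the sum of $\lambda L-1$ i.i.d.\ copies of $h^2 \tau$ to be at most $2R'$, where $\tau$ is a $1/2$-stable hitting time. By stability, this sum has the law of $(\lambda L)^2h^2\tau$. We therefore need $\tau\le 2/\lambda^2$ (up to constants), and $\mathbb P(\tau\le x)=\mathbb P(|N|\ge x^{-1/2})\le e^{-1/(2x)}$, which yields the Gaussian tail $e^{-c\lambda^2}$. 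The comparison with the hitting-time sum is the part needing care, and we expect it to be the main technical point. An upstep counted in $N^+$ must actually be an $h$-step of the global partition, which is only a subset of upcrossings, so it is dominated by the count. The first partial step also has to be handled. Nevertheless, domination by the upcrossing count should suffice, so no conditioning on the global partition is needed.

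Finally we take the union bound. Choose $\lambda=C\sqrt{n+1}$ with $C$ large. The number of triples $(i,m,\ell)$ at scale $n$ is $O(MK^{3n})$, so the probability that some window fails is at most $O(MK^{3n}e^{-cC^2(n+1)})$. This is summable in $n$ once $cC^2>3\log K$. By Borel--Cantelli, almost surely only finitely many scales fail, and those contribute finitely many counts, which are a.s.\ finite since $B$ is continuous. These are absorbed into $C_B$. Combined with $\sqrt{R'}\le\sqrt R$ and the $O(1)$ boundary terms, this gives \eqref{eq:uniform-count} on the event $\sup_{[0,1]}|B|\le M$. Taking the union over $M\in\mathbb N$ completes the plan.
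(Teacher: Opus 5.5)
Your proposal is correct and follows essentially the same route as the paper: a Gaussian tail for a single window from the i.i.d.\ one-sided hitting times of the downcrossings forced between consecutive upsteps, then a union bound over $O(K^{3n})$ (window, level) pairs at each scale, then Borel--Cantelli, with the finitely many remaining scales absorbed into $C_B$. The differences are cosmetic: you evaluate the small-ball probability exactly via $1/2$-stability (a sum of $k$ i.i.d.\ copies of $\tau$ has the law of $k^2\tau$) instead of the paper's Laplace-transform/Chernoff bound, and you restrict the levels on $\{\sup_{[0,1]}|B|\le M\}$ rather than via the paper's Borel--Cantelli bound on $\sup_{[0,5]}|B|$; also, a window of length $R<4R'$ may need three rather than two of your length-$2R'$ windows, which only changes a constant.
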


\begin{proof}
The proof is divided into three steps. First, we obtain a Gaussian tail bound for a fixed triple $(H,\ell,W)$. Next, a union bound and Borel--Cantelli give the desired estimate simultaneously over the relevant height ranges and dyadic time intervals at all sufficiently small scales. Finally, we pass from dyadic intervals to arbitrary intervals and absorb the finitely many remaining scales into the random constant.

\textit{Step 1. Tail bound for a fixed interval.} Suppose $W$ has length $R$, and $N^+_{H,\ell}(W) \ge k \ge 4$ for some $H$ and $\ell$. After ignoring the at most two crossing intervals containing the endpoints of $W$, there are at least $k-2$ upsteps inside $W$. Between any two consecutive upsteps, there must be a downcrossing of the interval $[\ell H,(\ell+1)H]$. Hence, the Brownian motion makes at least $k-3$ disjoint downcrossings during $W$, and these downcrossings are independent by the strong Markov property.

Recall that for a Brownian motion $(B_t)_{t \ge 0}$, the one-sided exit time $\sigma_H:=\inf\{t>0: B_t=-H\}$ has Laplace transform $\mathbb E e^{-\lambda \sigma_H}=e^{-H\sqrt{2\lambda}}$ for $\lambda>0$ (see~\cite[Exercise 2.18]{MP10}). Let $\sigma_H^1,\ldots,\sigma_H^{k-3}$ denote i.i.d.\ copies of $\sigma_H$. Then, by Markov's inequality,
\[ \mathbb P[N^+_{H,\ell}(W) \ge k] \le \mathbb P[\sigma_H^1+\cdots+\sigma_H^{k-3} \le R] \le e^{\lambda R} \mathbb E[e^{-\lambda(\sigma_H^1+\cdots+\sigma_H^{k-3})}]=\exp(\lambda R-(k-3)H\sqrt{2\lambda}). \]
Optimizing over $\lambda>0$ gives, for $k \ge 4$,
\begin{equation}\label{eq:count-tail}
    \mathbb P[N^+_{H,\ell}(W)\ge k] \le \exp\left(-\frac{(k-3)^2H^2}{2R}\right) \le \exp\left(-\frac{k^2H^2}{32R}\right).
\end{equation}

\textit{Step 2. Proving~\eqref{eq:uniform-count} for dyadic intervals.} First, by the reflection principle,
\[ \sum_{n=1}^\infty \mathbb P\left[\sup_{0 \le t \le 5} |B_t|>n \right] \le 4\sum_{n=1}^\infty \mathbb P[B_5>n]<\infty. \]
It follows from the Borel--Cantelli lemma that, almost surely, there exists an integer $N_0 \ge 2$ such that $\sup_{0 \le t \le 5} |B_t| \le N_0$. Increasing $N_0$ if necessary, we may assume that $H_n=H_0 K^{-n} \le 1$ for all $n \ge N_0$. We now fix $n \ge N_0$. Then
\begin{equation}\label{eq:event-condition}
    \mbox{any } H_n\mbox{-upstep that intersects } [0,5] \mbox{ must have its height range contained in } [-n-2,n+2].
\end{equation}
For each dyadic number $D \in [H_n^2,2]$, consider the following collection of intervals of length $2D$,
\[ \mathcal V_D:=\{[0,2D], \ [D,3D], \ [2D,4D],\ldots\}. \]
Let $\mathsf A \ge 4$ be a large constant to be determined later. For each $V \in \mathcal V_D$, by~\eqref{eq:count-tail} we have
\begin{equation}\label{eq:count-tail-2}
    \mathbb P \left[N^+_{H_n,\ell}(V) \ge \mathsf A \sqrt{n+1} \tfrac{\sqrt{2D}}{H_n} \right] \le \exp\left(-\frac{\mathsf A^2 (n+1)}{32}\right).
\end{equation}

For fixed $n \in \mathbb N$, we now apply a union bound using~\eqref{eq:count-tail-2} over all pairs $(V,\ell)$, where
\begin{itemize}
    \item $V \in \mathcal V_D$ for some dyadic $D \in [H_n^2,2]$, such that $V$ intersects $[0,1]$, and
    \item $\ell$ is an integer satisfying $|\ell| \le 2n/H_n$. (For larger $|\ell|$, we have $N_{H_n,\ell}^+(V)=0$ by~\eqref{eq:event-condition}.)
\end{itemize}
The number of dyadic levels $D$ is at most $\log_2(2/H_n^2)+1$. For fixed dyadic $D \ge H_n^2$, the number of $V \in \mathcal V_D$ that intersect $[0,1]$ is at most $1/D+1 \le 1/H_n^2+1$. Therefore, the number of pairs $(V,\ell)$ is at most
\[ (\log_2(2/H_n^2)+1) \times (1/H_n^2+1) \times (4n/H_n+1), \]
which is $O(n^2 H_n^{-3})=O(n^2 K^{3n})$ with the implicit constant depending only on $H_0$ and $K$. Hence, by choosing $\mathsf A \ge 4$ sufficiently large, the sum of~\eqref{eq:count-tail-2} over all pairs $(V,\ell)$ decays exponentially in $n$ and is therefore summable. By the Borel--Cantelli lemma, almost surely there exists an integer $N_1 \ge N_0$ such that for every $n \ge N_1$, none of the events in~\eqref{eq:count-tail-2}, indexed by the pairs $(V,\ell)$ considered above, occurs.

\textit{Step 3. Concluding the proof.} Fix $n \ge N_1$. For each dyadic $D \in [H_n^2,2]$ and every interval $W \subseteq [0,1]$ with length $R \in [D/2,D]$, it is clear that $W$ is contained in $V \cap [0,1]$ for some $V \in \mathcal V_D$. It follows that
\[ N_{H_n,\ell}^+(W) \le N_{H_n,\ell}^+(V)< \mathsf A \sqrt{n+1} \frac{\sqrt{2D}}{H_n} \le 2 \mathsf A \sqrt{n+1} \frac{\sqrt{R}}{H_n}. \]

Finally, at the finitely many remaining levels $n<N_1$, since $H_n$ is bounded away from 0, the number of $H_n$-upsteps that intersect $[0,1]$ is finite almost surely. Thus, we can choose a finite random constant $C_B \ge 2\mathsf A$ for which the asserted estimate holds at every level. This concludes the proof.
\end{proof}

\subsection{Construction of the regular tree}
\label{subsec:construct-tree}

We now construct the regular tree on the Brownian graph $\Gamma_B$ that will support the analytic construction. Lemma~\ref{lem:row} supplies many candidate descendants in every row, while Lemma~\ref{lem:conditional-independence} allows the construction to be iterated independently down the successive branches. By retaining only descendants with short duration, we will additionally ensure a time-length bound needed later. Recall that $I(P)$ denotes the time interval of an upbox $P$ and that descendants are indexed by the rows in $\mathcal J_K$.

\begin{proposition}\label{prop:construct-tree}
Fix $q>2$ and $\eta \in (0,1/40)$. There exist a rational number $r \in (0,1/8)$, $s \in (0,1)$, $\mathsf C>0$, and a positive integer $K$ with $rK \in \mathbb N$, such that $m=\lfloor sK \rfloor>K^{1-1/q}$, and the following event occurs almost surely: there exists an infinite $m(1+2r)K$-regular tree of upboxes such that

\begin{enumerate}[(a)]
\item The root of the tree is an $H_0$-upbox $[t_1,t_2] \times [B_{t_1},B_{t_2}]$ for some interval $[t_1,t_2]=[T_{H_0,k},T_{H_0,k+1}] \subseteq [0,1]$ and a rational $H_0 \in (0,1)$; the $n$-th level of the tree consists of $H_n$-upboxes, with $H_n=H_0K^{-n}$.
\item Every $H_n$-upbox in the tree has exactly $m$ many $K$-descendants in each row $j \in \mathcal J_K$.
\item Every $H_n$-upbox $P$ in the tree satisfies $\mathrm{len}(I(P))\le \mathsf C H_n^2$.
\end{enumerate}
\end{proposition}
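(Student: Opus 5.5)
The plan is a Galton--Watson type branching argument, combining Lemma~\ref{lem:row} with a duration filter and Lemma~\ref{lem:conditional-independence}, followed by a zero-one argument over countably many possible roots.

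\emph{Choice of parameters.} Under $\mathbb P^+$ the exit time $\tau_1$ is almost surely finite, so we may fix $\mathsf C$ with $\mathbb P^+[\tau_1>\mathsf C]\le \eta$. Apply Lemma~\ref{lem:row} with $\eta$ to obtain $r,c,K_0$, and put $s=c/2$. Since $1-1/q<1$, we can choose $K\ge K_0$ with $rK\in\mathbb N$ and $m=\lfloor sK\rfloor>K^{1-1/q}$.

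\emph{One-generation estimate.} Call an $H$-upbox \emph{good} if
\begin{enumerate}[(i)]
\item its duration is at most $\mathsf C H^2$, and
\item it has at least $cK$ many $K$-descendants in every row $j\in\mathcal J_K$.
\end{enumerate}
Each of (i) and (ii) fails with conditional probability at most $\eta$, by the choice of $\mathsf C$ and by Lemma~\ref{lem:row} respectively. Hence an upbox is good with probability at least $1-2\eta$.

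Now restrict to a good $P$, so that $P$ has at least $cK$ descendants $Q$ in each row. Condition on $\mathcal S_P$ together with all information at scale $H$. By Lemma~\ref{lem:conditional-independence}, the events ``$Q$ is good'' are then independent across descendants, and each has probability at least $1-2\eta$. The probability that row $j$ contains fewer than $m=cK/2$ good descendants is at most the probability that at least $cK/2$ of the candidates are bad. Since $2\eta<1/20$, a Chernoff bound makes this at most $e^{-\kappa K}$. A union bound over the $(1+2r)K$ rows then gives failure probability at most $(1+2r)Ke^{-\kappa K}$, which is small once $K$ is large. Increasing $K$ if necessary is harmless, because every other constant was fixed independently of $K$.

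One subtlety is the dependence between scales. Being good at scale $h$ for $Q$ involves the $h/K$-steps inside $I(Q)$, and those steps are independent across distinct $Q$ given the sign sequence. I would phrase goodness of $Q$ as a function of its own rescaled path segment only, which is exactly what Lemma~\ref{lem:conditional-independence} provides.

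\emph{Survival.} Starting from a good root, declare the children of a good box to be $m$ good descendants chosen (say, the first ones) in each row. This defines a branching process in which each vertex succeeds, i.e.\ has $m$ good children in every row, independently with probability at least $1-\delta$, where $\delta=(1+2r)Ke^{-\kappa K}$. I would then show that with positive probability $p_0>0$ every vertex along the construction succeeds. The cleanest route is to compare with a supercritical Galton--Watson process: a vertex has $m(1+2r)K$ children with probability $1-\delta$, and otherwise is pruned. The quantity needed is the probability that some \emph{full} $m(1+2r)K$-regular subtree survives. For this, define $\phi(x)=\mathbb P[\mathrm{Bin}(N,x)\ge M]$ with $N\ge cK$ per row, and apply fixed-point iteration of $x\mapsto(1-\eta')\,\phi(x)^{|\mathcal J_K|}$. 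A fixed point close to $1$ exists when $K$ is large, by the same Chernoff bound; this gives $p_0>0$ for the existence of a regular subtree. I expect this step to be the main obstacle. The required independence along the full tree is justified by iterating Lemma~\ref{lem:conditional-independence} level by level, using the strong Markov property.

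\emph{Almost sure conclusion.} Consider rational $H_0=2^{-i}$, together with roots taken to be the disjoint $H_0$-upsteps inside $[0,1]$ (there are about $H_0^{-2}$ of them). By the strong Markov property these give independent trials, each succeeding with probability at least $p_0$ times a probability bounded below. The number of such trials tends to infinity as $i\to\infty$, so almost surely one of them succeeds. Properties (a)--(c) then follow from the construction: (b) by selecting exactly $m$ descendants per row, and (c) by goodness condition (i).
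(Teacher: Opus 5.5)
Your outline follows the paper's route: a duration cutoff $\mathsf C$ plus Lemma~\ref{lem:row}, a recursively defined goodness controlled by a Chernoff bound and a union bound over the $(1+2r)K$ rows, a monotone recursion for the goodness probabilities, and many independent steps at a small rational scale. However, one step is false as written. You condition on $P$ being good, in particular fast (and on ``all information at scale $H$'', which includes $\mathrm{len}(I(P))$). You then invoke Lemma~\ref{lem:conditional-independence} to say the descendants' goodness events are independent, each with probability at least $1-2\eta$. That lemma gives independence only conditionally on $\mathcal S_P$, and fastness of $P$ is not $\mathcal S_P$-measurable. Indeed, $\mathrm{len}(I(P))$ is the sum of the durations of the $h$-steps inside $I(P)$, so conditioning on $\mathrm{len}(I(P))\le \mathsf C H^2$ couples the descendant segments and changes their laws. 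The same objection applies to your recursion $x\mapsto(1-\eta')\phi(x)^{|\mathcal J_K|}$ if $\eta'$ is meant to absorb the failure of fastness of $P$.

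The repair, which is exactly what the paper does, is to subtract the fastness failure rather than condition on fastness. Take $(N+1)$-good to mean: $P$ is fast, $G(P)$ holds, and in each row at least $m$ of the first $M=\lfloor cK\rfloor$ descendants are $N$-good. Set $\epsilon_0=\mathbb P^+[\tau_1>\mathsf C]$. Since $G(P)$ is $\mathcal S_P$-measurable, a union bound that conditions only on $\mathcal S_P$ gives
\[ t_{N+1}\ge 1-\epsilon_0-\eta-(1+2r)K\,\mathbb P[\mathsf{Bin}(M,t_N)<m]. \]
Your conditional product over rows given $\mathcal S_P$ is legitimate; only the fastness factor must be subtracted. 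The existence of a fixed point near $1$ is also not by itself what closes the argument. The right-hand side is increasing in $t_N$ and $t_0=1-\epsilon_0>0.9$, so induction gives $t_N\ge 0.9$ for all $N$. Since the $N$-good events decrease in $N$, the probability of being $\infty$-good is then at least $0.9$.

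In the last step, the $H_0$-upsteps lying inside $[0,1]$ do not form a deterministic family of independent trials. Which steps fit in $[0,1]$ is decided by the same durations that enter the goodness events. The clean version, again the paper's, fixes a rational $a$ and an integer $N$. Let $A_k^a$ be the event that the $k$-th $a$-step is upward and $\infty$-good; these are i.i.d.\ with probability $\mathsf p>0$. The failure probability is then at most $\mathbb P[T_{a,N}>1]+(1-\mathsf p)^N\le Na^2+(1-\mathsf p)^N$, which can be made arbitrarily small. With these two repairs your argument coincides with the paper's proof.
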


Before proving the proposition, we choose the parameters so that the branching recursion remains uniformly supercritical while the retained upboxes satisfy the required duration bound. Fix $q>2$ and $\eta \in (0,1/40)$. Let $r \in (0,1/8)$, $c \in (0,1)$, and $K_0 \in \mathbb N$ be the constants given by Lemma~\ref{lem:row}. Write $s=c/4$, and let $M=\lfloor cK \rfloor$, $m=\lfloor sK \rfloor$. By the Chernoff bound, there exist constants $C_0,c_0>0$, depending only on $r$ and $c$, such that for all sufficiently large $K \ge K_0$, we have
\begin{equation}\label{eq:parameter-K}
    (K+2rK) \, \mathbb P[\mathsf{Bin}(M,t)<m] \le C_0 K e^{-c_0 K} \quad \mbox{for all } t \in [0.9,1].
\end{equation}
We fix a positive integer $K \ge \max\{4,K_0\}$ with $rK \in \mathbb N$ so large that $m=\lfloor sK \rfloor>K^{1-1/q}$, and the right-hand side of~\eqref{eq:parameter-K} is at most $1/40$.

Let $\mathsf C>0$ be a constant. An $H$-upbox is called \emph{fast} if its duration is at most $\mathsf C H^2$. By translation invariance and Brownian scaling, the conditional probability that an $H$-upbox is not fast does not depend on $H$, and is equal to
\begin{equation}\label{eq:parameter-C}
    \epsilon_0:=\mathbb P^+[\mbox{the box } [0,\tau_1] \times [0,1] \mbox{ is not fast}]=\mathbb P^+[\tau_1>\mathsf C] \to 0 \quad \mbox{as} \quad \mathsf C \to \infty.
\end{equation}
We fix $\mathsf C<\infty$ so that $\epsilon_0<1/40$.

\begin{proof}[Proof of Proposition~\ref{prop:construct-tree}]

The proof proceeds in two steps. First, we use Lemma~\ref{lem:row} to show that a fixed upbox has a uniform positive probability of inducing an infinite regular tree. We then upgrade this positive-probability statement to an almost-sure one via the independence of Brownian motion.

\textbf{Step 1. Estimate for a single upbox.} As mentioned earlier, when considering an upbox $P=[T_{H,k},T_{H,k+1}] \times [B_{T_{H,k}},B_{T_{H,k+1}}]$, we work under the conditional law of $(B_t)_{t \in [T_{H,k},T_{H,k+1}]}$. Let $G(P)$ be the event considered in Lemma~\ref{lem:row}. Then $G(P)$ is measurable with respect to $\mathcal S_P$ and has conditional probability at least $1-\eta$.

For an upbox $P$, we say that $P$ is \emph{0-good} if $P$ is fast. For each $N \ge 0$, we say that $P$ is \emph{$(N+1)$-good} if the following conditions hold simultaneously:
\begin{itemize}
    \item $P$ is fast;
    \item $G(P)$ occurs, and moreover, for every $j \in \mathcal J_K$, there are at least $m$ indices $\ell \in \{1,\ldots,M\}$ for which $Q_{j,\ell}$ is $N$-good. Here, $Q_{j,1},\ldots,Q_{j,M}$ are the first $M$ elements, in chronological order according to the time interval, among the $K$-descendants of $P$ in row $j$.
\end{itemize}
A simple induction shows that if $P$ is $(N+1)$-good, then it is also $N$-good. We say that $P$ is $\infty$-good if $P$ is $N$-good for every $N \ge 0$. Note that if $P$ is $\infty$-good, then there exists an infinite regular tree rooted at $P$ that satisfies conditions (b) and (c).

By translation invariance and Brownian scaling, the conditional probability of $P$ being $N$-good does not depend on the choice of $P$. We denote this common value by $t_N=\mathbb P^+ [\mbox{the box } [0,\tau_1] \times [0,1] \mbox{ is } N\mbox{-good}]$. Lemma~\ref{lem:conditional-independence} implies that, conditional on $\mathcal S_P$, the events that $Q_{j,1},\ldots,Q_{j,M}$ are $N$-good are independent, for each fixed $j$, and each has probability $t_N$. Hence, by a union bound,
\begin{equation}\label{eq:recursion}
    t_{N+1} \ge 1-\epsilon_0-\eta -(K+2rK) \cdot \mathbb P[\mathsf{Bin}(M,t_N)<m].
\end{equation}
By the choice of $\epsilon_0$ in~\eqref{eq:parameter-C}, we have $t_0=1-\epsilon_0>0.9$. Moreover, since $\eta<1/40$ and $K$ is chosen as in~\eqref{eq:parameter-K}, an induction using~\eqref{eq:recursion} shows that $t_N \ge 0.9$ for every $N \ge 0$. Since the events that $P$ is $N$-good form a decreasing sequence, it follows that $P$ is $\infty$-good with conditional probability at least $0.9$.

\textbf{Step 2. Upgrading to an almost-sure event.} For $a>0$, recall the definition of $(T_{a,k})_{k \ge 0}$ from~\eqref{eq:global-partition}.
By the strong Markov property, the translated processes $(B_t-B_{T_{a,k}})_{t \in [T_{a,k},T_{a,k+1}]}$ for $k \ge 0$ are independent. Each segment exits upwards with probability $1/2$, and conditional on an upward exit its translated and rescaled path has law $\mathbb P^+$. Consequently, the events
\[ A_k^a:=\{B_{T_{a,k+1}}-B_{T_{a,k}}=a \ \mbox{ and } \ [T_{a,k},T_{a,k+1}] \times [B_{T_{a,k}},B_{T_{a,k+1}}] \mbox{ is } \infty\mbox{-good} \} \]
are independent and have the same probability $\mathsf p=\frac{1}{2} \mathbb P^+[\mbox{the box } [0,\tau_1] \times [0,1] \mbox{ is } \infty\mbox{-good}]>0$.

Let $E$ be the event that there exist $a \in \mathbb Q \cap(0,1)$ and $k \ge 0$ such that $A_k^a$ occurs, and $T_{a,k+1} \le 1$.

Fix any $\delta>0$. First choose a positive integer $N$ so that $(1-\mathsf p)^N<\delta/2$, and then choose one rational number $a \in (0,1)$ so small that $Na^2<\delta/2$. Since $(T_{a,k+1}-T_{a,k})_{k \ge 0}$ are i.i.d.\ with mean $a^2$, we have $\mathbb E[T_{a,N}]=Na^2$. By Markov's inequality and independence of the events $(A_k^a)_{k \ge 0}$,
\[ \mathbb P[E^c] \le \mathbb P[T_{a,N}>1]+\mathbb P[\cap_{k=0}^{N-1}(A_k^a)^c] \le Na^2+(1-\mathsf p)^N<\delta. \]
Since $\delta>0$ was arbitrary, it follows that $\mathbb P[E]=1$. On the event $E$, there exists an infinite regular tree rooted at $[T_{a,k},T_{a,k+1}] \times [B_{T_{a,k}},B_{T_{a,k+1}}]$ (with rational $H_0=a \in (0,1)$ and $[T_{a,k},T_{a,k+1}] \subseteq [0,1]$) that satisfies conditions (a), (b), and (c).
\end{proof}

\section{From the regular tree to non-removability}
\label{sec:construct-measure}

Throughout this section, we work on a fixed Brownian sample path belonging to the following almost-sure event. Proposition~\ref{prop:construct-tree} holds, and, for the corresponding value of $K$, Lemma~\ref{lem:uniform-count} holds simultaneously for every rational $H_0 \in (0,1)$. The latter follows by taking a countable intersection.

Fix one of the regular trees given by Proposition~\ref{prop:construct-tree}, with root $\mathsf R=[t_1,t_2] \times [B_{t_1},B_{t_2}]$ of size $H_0$, and write $H_n=H_0 K^{-n}$. For $n \ge 0$, denote by $\mathscr T_n$ the collection of $H_n$-upboxes in this tree, and set $\mathscr T=\cup_{n \ge 0} \mathscr T_n$. From now on, the parameters $q$, $r$, $s$, $K$, $m=\lfloor sK \rfloor$, $H_0$, the constant $\mathsf C$ in Condition (c) of Proposition~\ref{prop:construct-tree}, and the random constant $C_B$ from Lemma~\ref{lem:uniform-count} are all fixed, as are the Brownian path and the chosen tree. In particular, whenever $P \in \mathscr T_n$, its descendants will always mean the $K$-descendants of $P$ that belong to $\mathscr T_{n+1}$. If $Q \in \mathscr T_{n+1}$ is a descendant of $P \in \mathscr T_n$, we also say $P$ is the parent of $Q$. Unless otherwise stated, constants in the estimates below may depend on the fixed quantities above and on the smooth functions $\phi$ and $\omega$ chosen below, but not on the generation or on the points and radii involved.

Recall that the parameters were chosen so that
\begin{equation}\label{eq:parameter-conditions}
    m=\lfloor sK \rfloor>K^{1-1/q}>\sqrt{K}.
\end{equation}

We now convert the tree into a deterministic analytic construction on the subgraph $\Gamma_0:=\{(t,B_t): t \in [t_1,t_2]\}$. At each level, mass associated with a parent upbox will be redistributed among its selected descendants using smooth weights. The resulting sequence converges to a continuous function whose horizontal derivative is a nonzero positive measure supported on $\Gamma_0$, while the vertical derivative, after subtracting the initial term, has an $L^q$ density. In Section~\ref{subsec:mixed-riesz} we combine the structure of the vertical derivative with Lemma~\ref{lem:uniform-count} to obtain a mixed Riesz transform bound required in the application of Proposition~\ref{prop:criterion+}.

\subsection{The singular measure and vertical derivative}
\label{subsec:singular-measure}

We begin the construction by defining a system of smooth \emph{vertical} weights on the tree. The weights are chosen so that the mass of a parent is distributed equally among its selected descendants within each row and, at the same time, the contributions from adjacent rows form a smooth partition of unity. This conservation property makes it possible to produce a positive limiting horizontal derivative.

We first choose a nonnegative function $\phi\in C_c^\infty(\mathbb R)$, flat at the endpoints of its support, such that
\begin{equation}\label{eq:phi}
    \supp \phi \subseteq [-3r,1-2r], \qquad \phi=1 \ \mbox{on } [-2r,1-3r], \qquad \phi(t-1)+\phi(t)=1 \quad \mbox{for } 0 \le t \le 1.
\end{equation}
Such a function can be constructed by choosing a smooth transition on $[-3r,-2r]$, and then defining the matching transition on $[1-3r,1-2r]$ by the last identity.

For $n \ge 0$ and an upbox $P \in \mathscr T_n$ with height range $[\ell H_n,(\ell+1)H_n]$, define
\begin{equation}\label{eq:def-phi}
    \phi_{P,j}(y):=\phi(y/H_{n+1}-(\ell K+j)), \qquad \mbox{for} \ j \in \mathcal J_K=\{-3rK,\ldots,K-rK-1\}.
\end{equation}
We also write $J(P;j)=\ell H_n+j H_{n+1}+H_{n+1} [-3r,1-2r]$, an interval of length $(1+r)H_{n+1}$, so that $\supp \phi_{P,j} \subseteq J(P;j)$.

Suppose that the root $\mathsf R$ has height range $[\ell_0 H_0,(\ell_0+1)H_0]$. Choose a nonzero, nonnegative $\omega \in C_c^\infty(\mathbb R)$ satisfying $\supp \omega \subseteq \ell_0 H_0+H_1[-3rK-2r,K-rK-3r]$, and set $\theta_{\mathsf R}:=\omega$. We then define the functions $\theta_P$ recursively down the tree. More precisely, let $P \in \mathscr T_n$, and let $Q \in \mathscr T_{n+1}$ be one of the $m$ descendants of $P$ in row $j$, we define
\begin{equation}\label{eq:theta-child}
    \theta_Q(y)=\frac{1}{m} \theta_P(y)\phi_{P,j}(y).
\end{equation}
In particular, $\supp \theta_Q \subseteq \supp \theta_P \cap \supp \phi_{P,j} \subseteq \supp \theta_P$. Note that this support inclusion holds even though $Q$, viewed as a closed subset of $\mathbb R^2$, is not necessarily contained in $P$.

The recursive definition has two basic consequences that will be used throughout the construction: each weight remains localized near the height range of its upbox, and the total weight is preserved when passing from a parent to its descendants. We record these properties precisely in the following lemma.

\begin{lemma}\label{lem:conservation}
    The following properties hold.
    \begin{enumerate}[(a)]
        \item Let $n \ge 1$, and let $Q \in \mathscr T_n$ have height range $[\ell H_n,(\ell+1)H_n]$, then $\supp \theta_Q \subseteq \ell H_n+H_n[-3r,1-2r]$.
        \item For any $P \in \mathscr T$, we have $\sum_{j\in \mathcal J_K} \phi_{P,j}(y)=1$ for every $y \in \supp \theta_P$.
        \item For any $P \in \mathscr T$, we have $\sum_Q \theta_Q(y)=\theta_P(y)$, where the sum is taken over all $m(1+2r)K$ descendants $Q$ of $P$. Consequently, $\sum_{P \in \mathscr T_n} \theta_P(y)=\omega(y)$ for every $n \ge 0$.
    \end{enumerate} 
\end{lemma}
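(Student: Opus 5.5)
We prove the three parts in the order (b), (a), (c), by induction down the tree using the recursion~\eqref{eq:theta-child}. The key preliminary observation concerns the shifted functions $\phi(\cdot-k)$ for $k \in \mathbb Z$. Their supports have length $1+r<2$, and~\eqref{eq:phi} gives $\phi(t-1)+\phi(t)=1$ on $[0,1]$. Together with $\phi=1$ on $[-2r,1-3r]$ and $\supp\phi\subseteq[-3r,1-2r]$, this yields $\sum_{k\in\mathbb Z}\phi(t-k)=1$ for all $t\in\mathbb R$. Indeed, on $[-3r,1-3r]$ only $k=0,-1$ can contribute, since $\phi(t+1)$ vanishes for $t>-2r$, and the identity there transfers to all $t$ by periodicity.

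\emph{Part (b).} By~\eqref{eq:def-phi}, $\sum_{j\in\mathcal J_K}\phi_{P,j}(y)=\sum_{j\in\mathcal J_K}\phi(y/H_{n+1}-\ell K-j)$. It therefore suffices to show that for $y\in\supp\theta_P$ every $k\notin\mathcal J_K$ gives $\phi(y/H_{n+1}-\ell K-k)=0$. Equivalently, we need $y/H_{n+1}-\ell K \in [-3rK+1-2r,\,K-rK-1-3r]$, which is the set where only indices in $\mathcal J_K$ can contribute. For the root, this is exactly the hypothesis on $\supp\omega$. For $n\ge1$, we use (a): $y/H_{n+1}-\ell K\in K[-3r,1-2r]=[-3rK,K-2rK]$. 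This interval sits strictly inside the required one, because $1-2r\le rK$ and $1+3r\le rK$ once $rK\ge2$. If $rK=1$ the margin fails, so we would instead take $K$ large enough that $rK\ge 2$; the parameter choice permits this. Checking these endpoint margins is the only fiddly step.

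\emph{Part (a).} We argue by induction on $n$. Let $Q\in\mathscr T_n$ be a descendant of $P\in\mathscr T_{n-1}$ in row $j$. Then $Q$ has height range $[\ell' H_{n-1}+jH_n,\ell' H_{n-1}+(j+1)H_n]$, so $\ell H_n=\ell' H_{n-1}+jH_n$. Moreover $\supp\theta_Q\subseteq\supp\phi_{P,j}\subseteq J(P;j)=\ell H_n+H_n[-3r,1-2r]$, which is precisely the claim. This step requires no inductive hypothesis beyond the definitions.

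\emph{Part (c).} For a fixed row $j$, there are exactly $m$ descendants, each with $\theta_Q=\frac1m\theta_P\phi_{P,j}$, so their sum is $\theta_P\phi_{P,j}$. Summing over $j\in\mathcal J_K$ and applying (b) on $\supp\theta_P$ (off this support both sides vanish) gives $\theta_P$. The consequence then follows by induction on $n$. Each $Q\in\mathscr T_{n+1}$ has a unique parent in $\mathscr T_n$, since the tree is defined by descendants and each $Q$ appears under exactly one parent. Hence $\sum_{\mathscr T_{n+1}}\theta_Q=\sum_{\mathscr T_n}\theta_P=\dots=\theta_{\mathsf R}=\omega$.

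We expect the main obstacle to be the bookkeeping of support margins in (b), particularly the root case and the requirement that $rK$ be large enough.
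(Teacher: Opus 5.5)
Your strategy matches the paper's: (a) follows directly from the recursion, (c) follows by summing, and (b) reduces to showing that $\supp\theta_P$ lies in the region where only indices of $\mathcal J_K$ can contribute to $\sum_{j\in\mathbb Z}\phi_{P,j}=1$. The root case of (b) uses the choice of $\omega$, and the case $n\ge1$ uses (a).

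However, the step you flag as the crux is miscomputed, and as written it fails. Set $t=y/H_{n+1}-\ell K$. An index $k$ contributes only if $t-k\in(-3r,1-2r)$. The indices that must be shown not to contribute are the first ones \emph{outside} $\mathcal J_K$, namely $k=-3rK-1$ and $k=K-rK$. This gives the condition $t\in[-3rK-2r,\,K-rK-3r]$, not $[-3rK+1-2r,\,K-rK-1-3r]$. Your interval comes from imposing vanishing on the extreme indices $-3rK$ and $K-rK-1$ of $\mathcal J_K$ itself, so it is off by one at each end.

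Two of your claims are then false as stated:
\begin{enumerate}
\item The root hypothesis $\supp\omega\subseteq\ell_0H_0+H_1[-3rK-2r,K-rK-3r]$ is \emph{not} your interval; it is larger by $1$ at each end, so it does not place $t$ in your set.
\item $[-3rK,K-2rK]$ does not lie inside your interval. The left endpoint would require $-3rK\ge-3rK+1-2r$, i.e.\ $r\ge1/2$, whereas $r<1/8$. Your stated inequality $1-2r\le rK$ does not correspond to this comparison, and the extra assumption $rK\ge2$ would not rescue it.
\end{enumerate}

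With the correct region, which is the paper's $\widetilde J(P)$ after rescaling, the argument closes. The root case is then literally the hypothesis on $\omega$. For $n\ge1$, the inclusion $[-3rK,K-2rK]\subseteq[-3rK-2r,K-rK-3r]$ is trivial on the left. On the right it reduces to $rK\ge3r$, i.e.\ $K\ge3$, which holds since $K\ge4$, with no condition on $rK$.

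Your treatment of (a) and (c) and your proof that $\sum_{k\in\mathbb Z}\phi(t-k)=1$ are fine.
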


\begin{proof}
    (a) Let $P' \in \mathscr T_{n-1}$ be the parent of $Q$, and suppose that $P'$ has height range $[\ell'H_{n-1},(\ell'+1)H_{n-1}]$. If $Q$ is a descendant of $P'$ in row $j$, then its height range is $[(\ell'K+j)H_n, (\ell'K+j+1)H_n]$, so that $\ell=\ell'K+j$. By~\eqref{eq:theta-child}, we have
    \[ \supp \theta_Q \subseteq \supp \phi_{P',j} \subseteq J(P';j)=\ell H_n+H_n[-3r,1-2r]. \]
    This proves (a).
    
    (b) Let $P \in \mathscr T_n$ have height range $J(P)=[\ell H_n,(\ell+1)H_n]$, and define the auxiliary interval $\widetilde J(P):=\ell H_n+H_{n+1}[-3rK-2r,K-rK-3r]$. We first claim that
    \begin{equation}\label{eq:sum-phi}
        \sum_{j\in \mathcal J_K} \phi_{P,j}(y)=1 \qquad \mbox{for} \ y \in \widetilde J(P).
    \end{equation}
    For this argument, extend the notation $\phi_{P,j}$ to all $j \in \mathbb Z$ using the same formula as in~\eqref{eq:def-phi}. The last identity in~\eqref{eq:phi}, together with integer translation, gives $\sum_{j \in \mathbb Z} \phi(t-j)=1$ for every $t \in \mathbb R$. Therefore, $\sum_{j \in \mathbb Z} \phi_{P,j}(y)=1$ for every $y \in \mathbb R$.
    
    It is easy to verify that, if $y \in \widetilde J(P)$ and $j \in \mathbb Z \setminus \mathcal J_K$, then $y/H_{n+1}-(\ell K+j) \notin (-3r,1-2r)$. Since $\phi$ can be nonzero only in $(-3r,1-2r)$, it follows that $\phi_{P,j}(y)=0$ whenever $j \notin \mathcal J_K$. This proves~\eqref{eq:sum-phi}.

    We now show that $\supp \theta_P \subseteq \widetilde J(P)$. For $P=\mathsf R$, this follows directly from the choice of $\omega=\theta_{\mathsf R}$. Suppose instead that $P \neq \mathsf R$. By Part (a),
    \[ \supp \theta_P \subseteq \ell H_n+H_n[-3r,1-2r] = \ell H_n+H_{n+1}[-3rK,K-2rK] \subseteq \widetilde J(P), \]
    where we used $K \ge 4$ in the final step. Combining this inclusion with~\eqref{eq:sum-phi} proves (b).

    (c) Fix $P \in \mathscr T$. For each $j \in \mathcal J_K$, the upbox $P$ has exactly $m$ descendants in row $j$. Summing~\eqref{eq:theta-child} over all descendants $Q$ of $P$, we obtain
    \[ \sum_Q \theta_Q(y)=\sum_{j \in \mathcal J_K} m \cdot \frac{1}{m} \theta_P(y) \phi_{P,j}(y)=\theta_P(y) \sum_{j \in \mathcal J_K} \phi_{P,j}(y). \]
    If $y \in \supp \theta_P$, Part (b) shows that the last sum equals 1, and therefore $\sum_Q \theta_Q(y)=\theta_P(y)$. If $y \notin \supp \theta_P$, then both sides vanish, so the same identity holds for every $y \in \mathbb R$.
    
    Finally, summing this identity over all $P \in \mathscr T_n$ gives $\sum_{Q \in \mathscr T_{n+1}} \theta_Q(y)=\sum_{P \in \mathscr T_n} \theta_P(y)$. A simple induction on $n$ yields $\sum_{P \in \mathscr T_n} \theta_P(y)=\theta_{\mathsf R}(y)=\omega(y)$ for every $n \ge 0$. This completes the proof.
\end{proof}

We will also need quantitative control of the variation of these weights. Each passage to the next generation contributes a factor $1/m$ and introduces oscillation only at the new scale $H_{n+1}$; differentiating the recursive formula therefore gives the following uniform estimates.

\begin{lemma}\label{lem:theta-derivatives}
For every integer $k \ge 0$, there is a constant $C_k<\infty$ such that, for every $n \ge 0$, every upbox $P \in \mathscr T_n$, and every $j \in \mathcal J_K$,
\begin{equation}\label{eq:theta-phi-derivative-bound}
    \|(\theta_P \phi_{P,j})^{(k)}\|_{L^\infty} \le C_k m^{-n} H_{n+1}^{-k}.
\end{equation}
\end{lemma}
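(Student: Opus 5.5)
We would prove the bound by first controlling $\theta_P$ alone and then applying the Leibniz rule. The intermediate claim is: for every $k\ge0$ there is $A_k<\infty$ with
\[ \|\theta_P^{(k)}\|_{L^\infty}\le A_k\, m^{-n} H_n^{-k} \qquad \mbox{for all } n\ge0,\ P\in\mathscr T_n, \]
with the convention $H_0^{-k}$ replaced by a constant depending on $\omega$ at $n=0$. Granting this, we write $(\theta_P\phi_{P,j})^{(k)}=\sum_{i=0}^k\binom{k}{i}\theta_P^{(i)}\phi_{P,j}^{(k-i)}$. By~\eqref{eq:def-phi}, $\|\phi_{P,j}^{(k-i)}\|_{L^\infty}=\|\phi^{(k-i)}\|_{L^\infty}H_{n+1}^{-(k-i)}$. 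Using $H_n^{-i}=K^{-i}H_{n+1}^{-i}\le H_{n+1}^{-i}$, each term is then at most a constant times $m^{-n}H_{n+1}^{-k}$. This gives~\eqref{eq:theta-phi-derivative-bound} with $C_k=\sum_i\binom ki A_i\|\phi^{(k-i)}\|_{L^\infty}$.

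We prove the intermediate claim by induction on $n$, but to avoid constants compounding across generations we do not iterate the Leibniz estimate. Instead we unfold the recursion~\eqref{eq:theta-child}. If $P_0=\mathsf R,P_1,\ldots,P_n=P$ is the ancestral line, with $P_{i+1}$ lying in row $j_i$ of $P_i$, then
\[ \theta_P=m^{-n}\,\omega\prod_{i=0}^{n-1}\phi_{P_i,j_i}. \]
Each factor $\phi_{P_i,j_i}$ takes values in $[0,1]$ and has $k$-th derivative bounded by $\|\phi^{(k)}\|_{L^\infty}H_{i+1}^{-k}$. The $k$-th derivative of the product is a sum, over ways of distributing $k$ derivatives among the $n+1$ factors, of products of derivatives. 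A term in which derivative orders $k_i$ fall on factor $i$ is bounded by $\prod_i C H_{i+1}^{-k_i}$, and only factors with $k_i\ge1$ contribute constants other than $1$.

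The main obstacle is that the number of factors grows with $n$, so a naive count of distributions (roughly $(n+1)^k$) would lose a factor polynomial in $n$. We overcome this with the geometric decay of the scales. A term assigning orders $k_i$ is bounded by
\[ C^k\prod_i H_{i+1}^{-k_i}=C^kH_n^{-k}\prod_i K^{-(n-1-i)k_i}. \]
Summing over all distributions with multinomial weights, this is at most $C^k k!\,H_n^{-k}\bigl(\sum_{i\ge0}K^{-i}\bigr)^k\le C'_k H_n^{-k}$, since $K\ge4$ makes the series converge. The factor $\omega$, whose derivatives are bounded by constants independent of $n$, is absorbed in the same way. This yields $A_k$ independent of $n$. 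We should check that the $n=0$ case and the mild difference between $H_n$ and $H_{n+1}$ are consistent with the stated bound, but both only change constants.
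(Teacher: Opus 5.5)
Your proposal is correct and follows the paper's proof. The paper likewise unfolds the recursion to $\theta_P=m^{-n}\,\omega\prod_{i=0}^{n-1}\phi_{P_i,j_i}$ and bounds the $k$-th derivative of this product by $C\,m^{-n}\bigl(1+\sum_{i=1}^n H_i^{-1}\bigr)^k\lesssim m^{-n}H_n^{-k}$, which is your multinomial-plus-geometric-series count (your extra factor $k!$ is unnecessary but harmless); it then finishes with the same Leibniz step against $\phi_{P,j}$, using $H_n\ge H_{n+1}$.
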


\begin{proof}
We first show that for some constant $C_k'<\infty$,
\begin{equation}\label{eq:theta-derivative-bound}
    \|\theta_P^{(k)}\|_{L^\infty} \le C_k' m^{-n} H_n^{-k}.
\end{equation}

Let $P_0=\mathsf R, P_1,\ldots,P_{n-1}, P_n=P$ be the chain of ancestors of $P$, and suppose that $P_{i+1}$ is a descendant of $P_i$ in row $j_i$. Iterating~\eqref{eq:theta-child}, we obtain
\[ \theta_P(y)=m^{-n} \cdot \omega(y) \prod_{i=0}^{n-1} \phi_{P_i,j_i}(y). \]
For every integer $a \ge 0$, the definition~\eqref{eq:def-phi} of $\phi_{P_i,j_i}$ gives $\| \phi_{P_i,j_i}^{(a)} \|_{L^\infty} \le c_a H_{i+1}^{-a}$, where $c_a$ depends only on $a$ and $\phi$. When the above product is differentiated $k$ times, each derivative on a factor $\phi_{P_i,j_i}$ contributes a factor bounded by a constant multiple of $H_{i+1}^{-1}$, while the derivatives of $\omega$ are uniformly bounded. It follows that $\|\theta_P^{(k)}\|_{L^\infty} \le C m^{-n} (1+\sum_{i=1}^n 1/H_i)^k$. This proves~\eqref{eq:theta-derivative-bound} since $1+\sum_{i=1}^n 1/H_i \lesssim 1/H_n$.

Note that $(\theta_P \phi_{P,j})^{(k)}=\sum_{i=0}^k \binom{k}{i} \theta_P^{(i)} \phi_{P,j}^{(k-i)}$. Since $\| \phi_{P,j}^{(a)} \|_{L^\infty} \le c_a H_{n+1}^{-a}$, the estimate~\eqref{eq:theta-derivative-bound} and $H_n \ge H_{n+1}$ imply~\eqref{eq:theta-phi-derivative-bound}. This proves the lemma.
\end{proof}

We next introduce the \emph{horizontal} part of the construction. To each time interval we associate a smooth probability density and its cumulative distribution function. This allows the mass carried by a parent interval to be redistributed smoothly among the time intervals of its descendants.

For every time interval $I$, choose a normalized density $p_I\in C_c^\infty(\mathbb R)$ which is positive in $\operatorname{int}(I)$, flat at the endpoints, and satisfies $\supp p_I=I$ and $\int_{\mathbb R} p_I(x) \dd x=1$. Write $F_I(x)=\int_{-\infty}^x p_I(t) \dd t$.

Given $P \in \mathscr T_n$ and $j \in \mathcal J_K$, denote the $m$ descendants of $P$ in row $j$ by $P_{j,1},\ldots,P_{j,m}$. Define
\begin{equation}\label{eq:G-p-j}
    G_{P,j}(x)=\frac{1}{m} \sum_{\ell=1}^m F_{I(P_{j,\ell})}(x)-F_{I(P)}(x).
\end{equation}
Then $G_{P,j}$ is supported in $I(P)$. Since $0 \le F_I \le 1$, we also have $|G_{P,j}(x)| \le 1$.

For each $n \ge 0$, define
\begin{equation}\label{eq:def-un}
    u_n(x,y)=\sum_{P \in \mathscr T_n}\theta_P(y)F_{I(P)}(x).
\end{equation}

\begin{lemma}\label{lem:measure-construction}
    The sequence $(u_n)_{n \ge 0}$ converges uniformly to a continuous function $u$, with $u-u_0$ continuous and compactly supported. Moreover, $\mu:=D_x u$ is a nonzero finite positive Radon measure such that $\supp \mu \subseteq \Gamma_0$.
\end{lemma}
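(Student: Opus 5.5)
The plan is to write $u_n=u_0+\sum_{k=0}^{n-1}(u_{k+1}-u_k)$ and control each increment. By Lemma~\ref{lem:conservation}(c) and \eqref{eq:theta-child},
\[ u_{k+1}(x,y)-u_k(x,y)=\sum_{P\in\mathscr T_k}\sum_{j\in\mathcal J_K}\theta_P(y)\phi_{P,j}(y)\Bigl(\tfrac1m\sum_{\ell=1}^m F_{I(P_{j,\ell})}(x)\Bigr)-\sum_{P\in\mathscr T_k}\theta_P(y)F_{I(P)}(x), \]
and, using Lemma~\ref{lem:conservation}(b) to write $\theta_P=\sum_j\theta_P\phi_{P,j}$, this becomes
\[ u_{k+1}-u_k=\sum_{P\in\mathscr T_k}\sum_{j\in\mathcal J_K}\theta_P(y)\phi_{P,j}(y)\,G_{P,j}(x). \]
Each summand is bounded by $C_0m^{-k}$ (Lemma~\ref{lem:theta-derivatives}, $|G_{P,j}|\le1$). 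It is supported in $I(P)\times J(P;j)$, and is therefore compactly supported inside $[t_1,t_2]\times\supp\omega$.

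For uniform convergence I need a bound on overlaps. For fixed $(x,y)$, the $P\in\mathscr T_k$ with $x\in I(P)$ are pairwise distinct $H_k$-steps of a global partition. Hence at most two of them contain $x$, namely when $x$ is an endpoint. Also, for each $P$ only $O(1)$ indices $j$ have $y\in J(P;j)$. So $\|u_{k+1}-u_k\|_\infty\lesssim m^{-k}$, which is summable. Thus $u_n\to u$ uniformly, $u$ is continuous, and $u-u_0=\sum_k(u_{k+1}-u_k)$ is continuous and supported in the fixed compact set $[t_1,t_2]\times\supp\omega$.

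Positivity is the easy part. Each $u_n$ is nondecreasing in $x$ because $\theta_P\ge0$ and $F_I$ is nondecreasing, and this passes to the uniform limit. So $D_xu$ is a positive distribution, hence a positive Radon measure $\mu$. It is finite: for each $y$, $u(\cdot,y)$ increases from $0$ to $\lim_n\sum_P\theta_P(y)=\omega(y)$, by Lemma~\ref{lem:conservation}(c), since $F_{I(P)}=1$ to the right of $[t_1,t_2]$. Hence $\mu(\mathbb C)=\int\omega\,\dd y>0$, so $\mu\neq0$. Moreover $u(x,y)=0$ for $x\le t_1$ and $u(x,y)=\omega(y)$ for $x\ge t_2$.

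The main step is $\supp\mu\subseteq\Gamma_0$. Take $z_0=(x_0,y_0)\notin\Gamma_0$, with distance $d>0$ to $\Gamma_0$. If $x_0\notin[t_1,t_2]$ we are done by the boundary values above. Otherwise, pick $n$ so large that $H_n\ll d$ and $\mathsf C H_n^2\ll d$. Any $P\in\mathscr T_n$ whose time interval meets $B(x_0,d/4)$ lies within distance $O(H_n)$ of $\Gamma_0$ in height, because $B$ stays within the height range on $I(P)$ and $\mathrm{len}(I(P))$ is small by Proposition~\ref{prop:construct-tree}(c). The support of $\theta_P$ is within $O(H_n)$ of $J(P)$ by Lemma~\ref{lem:conservation}(a). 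Together these give $\theta_P(y)=0$ for $|y-y_0|<d/4$. The same holds for all $u_{n'}$ with $n'\ge n$, since $\supp\theta_Q\subseteq\supp\theta_P$ for descendants $Q$ of $P$.

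Consequently, on the disk $B(z_0,d/4)$ every $u_{n'}$ is a sum of terms $\theta_P(y)F_{I(P)}(x)$ in which $I(P)$ avoids the $x$-window. Such an $F_{I(P)}$ is constant ($0$ or $1$) there, so $D_xu_{n'}=0$ on the disk. Passing to the limit gives $D_xu=0$ on $B(z_0,d/4)$, so $z_0\notin\supp\mu$. The only genuinely delicate point is this localization, which uses the duration bound (c) and the support bookkeeping of Lemma~\ref{lem:conservation}(a).
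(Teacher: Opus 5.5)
Your proof is correct and follows essentially the same route as the paper. It uses the same increment identity $u_{k+1}-u_k=\sum_{P,j}\theta_P\phi_{P,j}G_{P,j}$ with an overlap count for uniform convergence, and the same ingredients (the duration bound (c), Lemma~\ref{lem:conservation}(a), and nestedness of $I(P)$ and $\supp\theta_P$) to localize $\mu$ on $\Gamma_0$. The differences are only in packaging: the paper writes the localization as $\supp\mu\subseteq\bigcap_n A_n\subseteq\Gamma_0$ with $A_n=\bigcup_{P\in\mathscr T_n}I(P)\times\supp\theta_P$, and reads positivity and total mass $\int\omega$ off the explicit formula $D_xu_n=\sum_P\theta_P(y)p_{I(P)}(x)\dd A$, whereas you use monotonicity of $u$ in $x$ and a contrapositive neighborhood argument.

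One small inaccuracy: $B$ does not stay inside the height range $J(P)$ during an upstep, since it can dip to nearly $B_{t_n}-H_n$. Your argument only needs the endpoint $(t_n,B_{t_n})\in\Gamma_0$, which lies within $O(H_n)$ of $\supp\theta_P$, so nothing breaks.
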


\begin{proof}
We first prove the uniform convergence. Substituting~\eqref{eq:theta-child} into~\eqref{eq:def-un} yields
\begin{align}\label{eq:un-increment}
    u_{n+1}(x,y)-u_n(x,y) &= \sum_{P \in \mathscr T_n} \sum_{j \in \mathcal J_K} \sum_{\ell=1}^m \theta_{P_{j,\ell}}(y) F_{I(P_{j,\ell})}(x) - \sum_{P \in \mathscr T_n} \theta_P(y)F_{I(P)}(x) \nonumber \\
    &= \sum_{P \in \mathscr T_n} \sum_{j \in \mathcal J_K} \sum_{\ell=1}^m \frac{1}{m} \theta_P(y) \phi_{P,j}(y) F_{I(P_{j,\ell})}(x) - \sum_{P \in \mathscr T_n} \sum_{j \in \mathcal J_K} \theta_P(y) \phi_{P,j}(y) F_{I(P)}(x) \nonumber \\
    &=\sum_{P \in \mathscr T_n}\sum_{j \in \mathcal J_K} \theta_P(y) \phi_{P,j}(y) G_{P,j}(x),
\end{align}
where we used Lemma~\ref{lem:conservation}(b) in the second equality, and~\eqref{eq:G-p-j} in the last equality.

The function $G_{P,j}$ vanishes outside $\operatorname{int}(I(P))$. Since the intervals $I(P)$, $P \in \mathscr T_n$, have pairwise disjoint interiors, for each fixed $x$ at most one $P \in \mathscr T_n$ contributes nontrivially to~\eqref{eq:un-increment}. Once $P$ is fixed, $\supp \phi_{P,j} \subseteq J(P;j)$ implies that, for each fixed $y$, at most two indices $j \in \mathcal J_K$ contribute. Since $|G_{P,j}(x)| \le 1$, it follows from Lemma~\ref{lem:theta-derivatives} (with $k=0$) that $\|u_{n+1}-u_n\|_{L^\infty} \le 2C_0 m^{-n}$. Thus $u_n$ converges uniformly to a continuous function $u$. Since each $u_{n+1}-u_n$ is supported in the same compact set $I(\mathsf R) \times \supp \omega$, we obtain that $u-u_0 \in C_c(\mathbb C)$.

    We next consider the horizontal derivative. Since $F_I'=p_I$, differentiating~\eqref{eq:def-un} with respect to $x$ yields
\begin{equation}\label{eq:Dxun}
    D_xu_n =\sum_{P \in \mathscr T_n} \theta_P(y) p_{I(P)}(x) \dd A.
\end{equation}
In particular, each $D_x u_n$ is a positive Radon measure.
By Lemma~\ref{lem:conservation}(c), the measures in~\eqref{eq:Dxun} have a common finite total mass $\int_{\mathbb R} \omega(y) \dd y>0$. Moreover, $\supp D_x u_n \subseteq I(\mathsf R) \times \supp \omega$ for every $n$. Uniform convergence of $u_n$ implies $D_x u_n \to D_x u$ in the sense of distributions, so $\mu:=D_x u$ is a positive Radon measure supported on the same compact set. Since the total mass of $\mu$ is finite and positive, $\mu$ is finite and nonzero.

It remains to determine the support of $\mu$. For $n \ge 1$, let $A_n=\bigcup_{P \in \mathscr T_n} I(P) \times \supp \theta_P$. Each $A_n$ is compact. If $Q$ is a descendant of $P$, then $I(Q) \subset I(P)$ and $\supp \theta_Q \subset \supp \theta_P$, and therefore $A_{n+1} \subseteq A_n$. By~\eqref{eq:Dxun}, every $D_xu_N$ with $N\ge n$ is supported in $A_n$, and hence $\supp \mu \subset \cap_n A_n$.

Now take $(t,y) \in \cap_n A_n$. For each $n$, choose $P_n \in \mathscr T_n$ such that $(t,y) \in I(P_n) \times \supp \theta_{P_n}$ and let $t_n$ denote the left endpoint of $I(P_n)$. By Condition (c) of Proposition~\ref{prop:construct-tree}, $|t-t_n|\le \mathsf C H_n^2$. On the other hand, Lemma~\ref{lem:conservation}(a) gives $|y-B_{t_n}| \le \max\{3r,1-2r\} H_n \le H_n$. Since $H_n \to 0$, we have $t_n \to t$ and $B_{t_n} \to y$, and continuity of the Brownian path gives $y=B_t$. Thus $(t,y) \in \Gamma_0$, which proves $\supp \mu \subset \Gamma_0$. This completes the proof.
\end{proof}

It remains to control the vertical derivative. Unlike the horizontal derivative, no singular part should appear in this direction. Using the derivative estimates from Lemma~\ref{lem:theta-derivatives}, we obtain an absolutely continuous vertical derivative with finite $L^q$-norm.

\begin{lemma}\label{lem:vertical-derivative}
    There exists a function $b \in L_c^q(\mathbb C)$ such that $D_y (u-u_0)=b \dd A$.
\end{lemma}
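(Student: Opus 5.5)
We write $u-u_0=\sum_{n\ge0}(u_{n+1}-u_n)$, which converges uniformly by the proof of Lemma~\ref{lem:measure-construction}, and use the increment formula~\eqref{eq:un-increment}. Each increment is smooth in $y$, so
\[ b_n(x,y):=\partial_y(u_{n+1}-u_n)(x,y)=\sum_{P\in\mathscr T_n}\sum_{j\in\mathcal J_K}(\theta_P\phi_{P,j})'(y)\,G_{P,j}(x). \]
Each $b_n$ is bounded, measurable and supported in the fixed compact set $I(\mathsf R)\times\supp\omega$. The plan is to show that $\sum_n\|b_n\|_{L^q}<\infty$. Once this holds, the series $b:=\sum_n b_n$ converges in $L^q$, and hence in $L^1_{\rm loc}$, to a function $b\in L^q_c(\mathbb C)$. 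For any test function $\psi$, uniform convergence of the partial sums to $u-u_0$ then gives
\[ -\int(u-u_0)\partial_y\psi\dd A=\lim_N\sum_{n<N}\int b_n\psi\dd A=\int b\psi\dd A, \]
which is exactly $D_y(u-u_0)=b\dd A$.

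To estimate $\|b_n\|_{L^q}$, we first bound it pointwise. For fixed $x$, at most one $P\in\mathscr T_n$ has $G_{P,j}(x)\ne0$, since the intervals $I(P)$ have disjoint interiors. For fixed $y$, at most two indices $j$ contribute. Using $|G_{P,j}|\le1$ and Lemma~\ref{lem:theta-derivatives} with $k=1$, we get $|b_n|\lesssim m^{-n}H_{n+1}^{-1}\lesssim (K/m)^n$. This crude bound alone diverges, so we must also use the small size of the support.

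The key step is therefore to bound the area of $\supp b_n$. This set is contained in $\bigcup_{P\in\mathscr T_n}I(P)\times\supp\theta_P$. By Lemma~\ref{lem:conservation}(a), the factor $\supp\theta_P$ has length $\lesssim H_n$. By Condition (c) of Proposition~\ref{prop:construct-tree}, $\mathrm{len}(I(P))\le\mathsf C H_n^2$. The tree has $\#\mathscr T_n=(m(1+2r)K)^n$ upboxes at level $n$. Hence the area of $\supp b_n$ is at most
\[ \lesssim (m(1+2r)K)^nH_n^3\lesssim m^n(1+2r)^nK^{-2n}. \]
Alternatively, one can use that the time intervals at level $n$ are disjoint subintervals of $I(\mathsf R)$, which gives the bound $\lesssim H_n\cdot\#\{\text{rows}\}$. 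Either way, combining with the pointwise bound gives
\[ \|b_n\|_{L^q}^q\lesssim (K/m)^{nq}\,m^n(1+2r)^nK^{-2n}. \]

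The main obstacle is verifying that this quantity decays geometrically with the chosen parameters. The requirement is $K^{q-2}(1+2r)<m^{q-1}$. By~\eqref{eq:parameter-conditions} we only have $m>K^{1-1/q}$, that is $m^{q}>K^{q-1}$, which does not directly give this; a naive count may not close.

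If the duration-based area bound does not suffice, I would instead use the disjointness of the intervals more cleverly. For fixed $y$, only upboxes whose weight $\theta_P$ is nonzero at $y$ contribute. Their number is governed by the row structure: the supports at level $n$ overlap at most boundedly in $y$ within each branch. So the total $x$-length of $\{x: b_n(x,y)\ne0\}$ is at most the sum of $\mathrm{len}(I(P))$ over such $P$, which is $\lesssim (\text{number of such }P)\cdot H_n^2$. One then integrates in $y$. I would try the cleanest version first: since the contributing $x$-set for fixed $y$ has length at most $\mathrm{len}(I(\mathsf R))$, we get $\|b_n\|_{L^q}\lesssim (K/m)^n|\text{support}|^{1/q}$. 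I would then optimize between the two area bounds, and expect the condition $m>K^{1-1/q}$ to be precisely what makes the resulting series converge.
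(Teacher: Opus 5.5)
Your outline is the paper's: bound $b_n$ pointwise by $\lesssim m^{-n}H_{n+1}^{-1}$, bound the area of its support, sum the $L^q$ norms, and pass to the distributional limit. The pointwise bound and the limit argument are fine. The gap is that you never establish $\sum_n\|b_n\|_{L^q}<\infty$. You call it the ``main obstacle,'' say the count ``may not close,'' and end with an expectation rather than a computation.

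The missing step is short, and you should slice with $x$ fixed, not $y$ fixed. For each $x$, at most one $P\in\mathscr T_n$ has $x\in\operatorname{int}I(P)$. For that $P$, $b_n(x,\cdot)$ is supported in $\supp\theta_P$, which has length at most $(1+r)H_n$ by Lemma~\ref{lem:conservation}(a); the paper uses $\bigcup_j J(P;j)$, of length at most $2H_n$. The $x$-projection of the support lies in $I(\mathsf R)\subset[0,1]$, so the support has area at most $2H_n$. Hence
\[ \|b_n\|_{L^q}^q\lesssim H_n\,(m^{-n}H_{n+1}^{-1})^q\lesssim (K^{q-1}/m^q)^n, \]
which is summable exactly because $m>K^{1-1/q}$ in~\eqref{eq:parameter-conditions}. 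This is the ``alternative'' bound you mention, but you never plug it in. Your ``either way'' formula is only what the duration count gives. Your last suggestion, fixing $y$ and bounding the $x$-set by $\mathrm{len}(I(\mathsf R))$, only bounds the area by a constant. That cannot beat $|b_n|\lesssim (K/m)^n$.

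Your doubt about the duration-based count is also unfounded. You have
\[ \frac{K^{q-2}(1+2r)}{m^{q-1}}=\frac{K^{q-1}}{m^q}\cdot\frac{m(1+2r)}{K}, \]
and $m(1+2r)\le sK(1+2r)<K$ because $s=c/4<1/4$ and $r<1/8$. So this ratio is even smaller than $K^{q-1}/m^q<1$, and that route closes too. It actually gives a stronger area bound, since Condition (c) of Proposition~\ref{prop:construct-tree} forces the total length of the level-$n$ time intervals to be $\lesssim (m(1+2r)/K)^n$. It does, however, rely on Condition (c). The paper's proof of this lemma needs only the disjointness of the $I(P)$ and the localization of $\theta_P$.
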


\begin{proof}
For $N \ge 1$, we write $u_N-u_0=\sum_{n=0}^{N-1} (u_{n+1}-u_n)$. Differentiating~\eqref{eq:un-increment} with respect to $y$ gives $D_y (u_N-u_0)=(\sum_{n=0}^{N-1} \mathsf b_n) \dd A$, where, for $n \ge 0$,
\begin{equation}\label{eq:b-n}
    \mathsf b_n(x,y) := \partial_y(u_{n+1}-u_n) =\sum_{P \in \mathscr T_n} \sum_{j \in \mathcal J_K} G_{P,j}(x) \,(\theta_P\phi_{P,j})'(y).
\end{equation}

We now bound $\mathsf b_n$ in $L^q$. As in the proof of Lemma~\ref{lem:measure-construction}, for each fixed $(x,y)$, at most one $P \in \mathscr T_n$ and at most two indices $j \in \mathcal J_K$ contribute nontrivially to~\eqref{eq:b-n}. Since $|G_{P,j}(x)| \le 1$, Lemma~\ref{lem:theta-derivatives} (with $k=1$) yields $|\mathsf b_n(x,y)| \le 2C_1 m^{-n} H_{n+1}^{-1}$.

We next estimate the area of $\supp \mathsf b_n$. Since $G_{P,j}$ vanishes outside $\operatorname{int}(I(P))$, the projection of $\supp \mathsf b_n$ onto the $x$-axis is contained in $\cup_{P \in \mathscr T_n} I(P) \subset I(\mathsf R) \subset [0,1]$. For fixed $x$, at most one $P \in \mathscr T_n$ contributes nontrivially to~\eqref{eq:b-n}. For this $P$, all possible $y$-values lie in $\cup_{j \in \mathcal J_K} \supp \phi_{P,j}$. Since $|\mathcal J_K|=(1+2r)K$ and $\supp \phi_{P,j} \subseteq J(P;j)$ with length $(1+r)H_{n+1}$, the length of the union is bounded by $(1+2r)K \cdot (1+r) H_{n+1} \le 2H_n$. Hence the area of $\supp \mathsf b_n$ is at most $2H_n$.

Combining the pointwise bound with the support estimate, we obtain
\[ \|\mathsf b_n\|_{L^q}^q=\int_{\mathbb R^2} |\mathsf b_n(x,y)|^q \dd A \le 2H_n (2C_1 m^{-n} H_{n+1}^{-1})^q \lesssim (K^{q-1}/m^q)^n. \]
By the parameter choice~\eqref{eq:parameter-conditions}, we have $K^{q-1}/m^q<1$. Hence $\sum_{n=0}^\infty \|\mathsf b_n\|_{L^q}<\infty$. It follows that the series $\sum_{n=0}^\infty \mathsf b_n$ converges in $L^q(\mathbb C)$; denote its limit by $b:=\sum_{n=0}^\infty \mathsf b_n$. Moreover, each $\mathsf b_n$ is supported in the same compact set $I(\mathsf R) \times \supp \omega$. Therefore $b$ is also supported in this compact set, and $b \in L_c^q(\mathbb C)$. Finally, $\sum_{n=0}^{N-1} \mathsf b_n \to b$ in $L^q(\mathbb C)$, and therefore in the sense of distributions. On the other hand, the uniform convergence $u_N \to u$ implies $D_y(u_N-u_0) \to D_y(u-u_0)$ distributionally. By uniqueness of distributional limits, $D_y(u-u_0)=b \dd A$. This completes the proof.
\end{proof}

\subsection{Mixed Riesz transform bound and Proof of Theorem~\ref{thm:construction}}
\label{subsec:mixed-riesz}

The preceding subsection gives $b \in L_c^q(\mathbb C)$, and the usual $L^q$-boundedness of the Riesz transforms therefore yields only $\mathcal T b \in L^q(\mathbb C)$, where $\mathcal T=\mathcal R_x \mathcal R_y$. For the application of Proposition~\ref{prop:criterion+}, however, we need an $L^\infty$ bound. We obtain this stronger estimate by combining a pointwise bound for the individual terms in $b$ with the uniform upstep estimate of Lemma~\ref{lem:uniform-count}.

\begin{proposition}\label{prop:riesz-bounded}
    The series $\sum_{n \ge 0} \mathcal T \mathsf b_n$ converges uniformly on $\mathbb C$ to $\mathcal Tb$. In particular, $\mathcal T b \in L^\infty(\mathbb C)$.
\end{proposition}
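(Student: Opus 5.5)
The plan is to prove a uniform bound $\|\mathcal T\mathsf b_n\|_{L^\infty}\lesssim \lambda^n$ for some $\lambda<1$, up to a factor polynomial in $n$. The series then converges uniformly. Since $\sum\mathsf b_n\to b$ in $L^q$ and $\mathcal T$ is bounded on $L^q$, the uniform limit must coincide a.e.\ with $\mathcal Tb$.

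Write $\mathsf b_n=\sum_{P\in\mathscr T_n}\sum_j G_{P,j}(x)\psi_{P,j}'(y)$ with $\psi_{P,j}=\theta_P\phi_{P,j}$. Each term is a smooth-in-$y$, bounded-in-$x$ function supported on the rectangle $S_{P,j}=I(P)\times J(P;j)$.

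The first step is a single-term estimate, the analogue of Lemma~\ref{lem:single}. Fix $z=(x_0,y_0)$ and write
\[ \mathcal T(G\psi')(z)=\operatorname{p.v.}\int G(x)\Bigl(\int K_{\mathcal T}(x_0-x,y_0-y)\psi'(y)\dd y\Bigr)\dd x. \]
Integrate by parts in $y$ to move the derivative onto the kernel. Then use that $K_{\mathcal T}$ is odd in $Y$: after the integration by parts, the inner integral is bounded by $\|\psi\|_\infty\int|\partial_YK_{\mathcal T}|$-type quantities. In the near region $|X|\lesssim H_{n+1}$, the integrand should instead be handled by subtracting $\psi'(y_0)$ and using the oddness (the odd part of $K_{\mathcal T}$ integrates to zero over symmetric $Y$-intervals). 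For fixed $X$, the one-dimensional integral $\int K_{\mathcal T}(X,Y)\psi'(y_0-Y)\dd Y$ is bounded by
\[ \min\Bigl\{\|\psi'\|_\infty\,\mathrm{len}(J)\,|X|^{-2},\ \|\psi'\|_\infty\,|X|^{-1}\Bigr\}, \]
and the small-$|X|$ singularity is cancelled by oddness together with $\|\psi''\|_\infty$. I aim for a bound of the form
\[ |\mathcal T(G\psi')(z)|\lesssim m^{-n}\,\mathrm{len}(I(P))\,H_{n+1}^{-1}\,\bigl(H_{n+1}+\dist(z,S_{P,j})\bigr)^{-1}\cdot(\text{decay}), \]
with decay like $H_{n+1}^2/\dist^2$ far away. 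Here $\mathrm{len}(I(P))$ appears because $|G|\le1$ is integrated over $x\in I(P)$, and no derivative of $G$ is used. By Proposition~\ref{prop:construct-tree}(c), $\mathrm{len}(I(P))\le\mathsf C H_n^2$.

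The second step is to sum over $P\in\mathscr T_n$ near a given $z$ by decomposing into dyadic annuli $\dist(z,S_{P,j})\sim 2^kH_{n+1}$. Rectangles in annulus $k$ have height ranges within $O(2^kH_{n+1})$ of $y_0$, hence in $O(2^k)$ rows at scale $H_n$ (or $O(1)$ rows if $2^kH_{n+1}<H_n$). Their time intervals meet a window of length $R=2^kH_{n+1}+\mathsf CH_n^2$. Lemma~\ref{lem:uniform-count} bounds the number per row by $C_B\sqrt{n+1}\sqrt R/H_n$. Multiplying the count by the single-term bound and summing over $k$ should give
\[ \|\mathcal T\mathsf b_n\|_\infty\lesssim\sqrt{n+1}\,m^{-n}K^{n/2}\cdot\mathrm{poly}(K), \]
and $m>\sqrt K$ from \eqref{eq:parameter-conditions} then gives geometric decay.

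The main obstacle is the single-term estimate in the near-field regime, together with making the annulus sum converge. For $2^kH_{n+1}\gg\mathsf CH_n^2$, the count grows like $2^{k}\cdot2^{k/2}$ while the single-term decay must beat $2^{3k/2}$; I expect the far-field kernel decay $\mathrm{len}(I)\,H\,\dist^{-3}$, obtained from a second cancellation in $y$ ($\int\psi'=0$), to suffice. I will first try to establish this sharper decay via Taylor expansion of $K_{\mathcal T}$ in $y$ around the center of $J(P;j)$.
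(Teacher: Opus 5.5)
Your proposal is correct and follows essentially the paper's route. You prove a single-term estimate using the oddness of $K_{\mathcal T}$ in $Y$ together with $\|\psi''\|_{L^\infty}$ near the rectangle and kernel decay away from it, with no derivative of $G_{P,j}$. You then sum over dyadic annuli, where $\mathrm{len}(I(P))\le\mathsf C H_n^2$ and Lemma~\ref{lem:uniform-count} give the bound $\sqrt{n+1}(\sqrt K/m)^n$.

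The only difference is that your planned second cancellation ($\int\psi'=0$, giving $\dist^{-3}$ decay) is valid but unnecessary. The crude bound $|K_{\mathcal T}(X,Y)|\le 1/(4Y^2)$ (or $1/(4X^2)$) already gives $(H_{n+1}+\dist)^{-2}$, and $2^{-2k}$ already beats the $2^{3k/2}$ growth of the count in the $k$-th annulus.
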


We first establish an elementary pointwise estimate for a single term in the expression~\eqref{eq:b-n}.

\begin{lemma}\label{lem:single}
Let $a_1 \ge 1$, $a_2>0$, $h>0$, and $L>0$. Let $I,J \subset \mathbb R$ be bounded intervals with $\mathrm{len}(I)=L$ and $\mathrm{len}(J) \le a_1h$. Suppose that
\begin{enumerate}[(i)]
    \item $G \in C_c^\infty(\mathbb R)$, $\supp G \subseteq I$, and $\|G\|_{L^\infty} \le 1$.
    \item $\gamma \in C_c^\infty(\mathbb R)$, $\supp \gamma \subseteq J$, $\|\gamma\|_{L^\infty} \le a_2/h$, and $\|\gamma'\|_{L^\infty} \le a_2/h^2$.
\end{enumerate}
Write $(G \otimes \gamma)(x,y)=G(x) \gamma(y)$. Then, for every $z=(x,y) \in \mathbb R^2$,
\[ |\mathcal T (G \otimes \gamma)(z)| \le 2 a_1 a_2 L \, (h+\dist(z,I\times J))^{-2}. \]
\end{lemma}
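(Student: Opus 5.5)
The plan is to work directly with the kernel representation
\[
\mathcal T(G\otimes\gamma)(z)=\operatorname{p.v.}\int K_{\mathcal T}(z-w)\,G(x')\gamma(y')\dd A(w),\qquad w=(x',y'),
\]
and to split into two regimes according to whether $d:=\dist(z,I\times J)$ is at least $h$ or smaller than $h$. In both regimes, the goal is to produce a bound of the form (constant)$\cdot a_1a_2L\,(h+d)^{-2}$ using only $\|G\|_{L^\infty}\le1$ and the size of $\supp G$, so that no derivative of $G$ ever enters. This is what will later make the estimate independent of the oscillation of the $G_{P,j}$.

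\emph{Far regime, $d\ge h$.} Here $z\notin \supp(G\otimes\gamma)$, so there is no principal value, and the integral converges absolutely. I will use $|K_{\mathcal T}(X,Y)|\le \frac{1}{\pi}\frac{|X||Y|}{(X^2+Y^2)^2}\le \frac{1}{2\pi |(X,Y)|^2}$ together with $|z-w|\ge d$ on the support. I will also use that the support has area at most $L\cdot a_1h$ and that $|G\gamma|\le a_2/h$. This gives a bound $\le \frac{a_1a_2L}{2\pi d^2}$. Since $h+d\le 2d$, this is at most $\frac{2a_1a_2L}{\pi(h+d)^2}$, which is within the claimed constant.

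\emph{Near regime, $d<h$.} Here $(h+d)^2<4h^2$, so it suffices to bound $|\mathcal T(G\otimes\gamma)(z)|$ by a constant multiple of $a_1a_2L/h^2$. I will use the oddness of $K_{\mathcal T}$ in the vertical variable. Because $G\otimes\gamma$ is smooth with compact support, the principal value may be computed with the excised region $\{|z-w|>\delta\}$, which is symmetric under the reflection $Y\mapsto -Y$ about $z$. Fubini applies on each truncated region, so for each fixed $x'$ I can subtract $\gamma(y)$ inside the region $\{|y-y'|<h\}$. This costs nothing, since $\int K_{\mathcal T}(X,Y)\,dY$ over a $Y$-symmetric set vanishes. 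The integral then splits into two pieces.
\begin{itemize}
\item[(1)] A near piece with integrand $K_{\mathcal T}(X,Y)\,G(x')\bigl(\gamma(y-Y)-\gamma(y)\bigr)$ over $|Y|<h$. Here $|\gamma(y-Y)-\gamma(y)|\le a_2|Y|/h^2$, and $\int_{\mathbb R}\frac{|X|Y^2}{(X^2+Y^2)^2}\,dY=\frac{\pi}{2}$ uniformly in $X$. This piece is absolutely integrable, and integrating over $x'\in I$ gives a bound of $\frac{a_2L}{2h^2}$.
\item[(2)] A far piece over $|Y|\ge h$. Here I use $|\gamma|\le a_2/h$ and the bound
\[
\int_{|Y|\ge h}\frac{|X||Y|}{(X^2+Y^2)^2}\,dY=\frac{|X|}{X^2+h^2}\le\frac{1}{2h}.
\]
Integrating over $I$ gives a bound $\lesssim a_2L/h^2$.
\end{itemize}
Since $a_1\ge1$, both pieces are $\lesssim a_1a_2L/h^2$. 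The truncation $\delta\downarrow0$ is harmless because the near integrand is integrable.

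The main obstacle is purely bookkeeping of constants to reach exactly the factor $2$; qualitatively the bound is clear. If the crude estimates above overshoot, I will sharpen them in two ways. First, in piece (2), I will integrate only over $y'\in J$, whose length is at most $a_1h$, using $|K_{\mathcal T}|\le 1/(2\pi Y^2)$; this is where $a_1$ naturally enters. Second, I will choose the cutoff in piece (1) as $|Y|<\lambda h$ for a suitably chosen $\lambda$ to balance the two pieces. The structural points that must be verified carefully are the symmetric-truncation justification of the subtraction step and the uniform-in-$X$ evaluation of the one-dimensional integrals.
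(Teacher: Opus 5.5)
Your argument is essentially the paper's. You use the same split at $d=h$ and the same absolutely convergent estimate in the far regime. In the near regime you use the same oddness of $K_{\mathcal T}$ in the vertical variable on symmetric truncations, the same Lipschitz bound on $\gamma$, and the same identity $\int_{\mathbb R}\frac{1}{\pi}\frac{|X|Y^2}{(X^2+Y^2)^2}\dd Y=\frac12$, with no derivative of $G$ entering. In the far regime your radial bound $|K_{\mathcal T}(w)|\le(2\pi|w|^2)^{-1}$ is cleaner than the paper's $\max\{d_x,d_y\}\ge d/\sqrt2$ argument, and it gives the better constant $2/\pi$.

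The one thing that does not go through as written is the constant in the near regime. Keeping the $1/\pi$ in $K_{\mathcal T}$, your two pieces give $\frac{a_2L}{2h^2}+\frac{a_2L}{2\pi h^2}$. After $h+d<2h$ this is $(2+\frac{2}{\pi})a_2L(h+d)^{-2}$, which does not yield the stated bound when $a_1<1+1/\pi$. Your first proposed sharpening does not help: restricting piece (2) to $y'\in J$ only trades a factor $1$ for $a_1\ge1$. With the crude full-line bound on piece (1), your second sharpening has no finite balance point. The total $\frac{a_2L}{h^2}\bigl(\frac12+\frac{1}{2\pi\lambda}\bigr)$ is decreasing in $\lambda$, so it only reaches $\frac{a_2L}{2h^2}$ as $\lambda\to\infty$.

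That limit is exactly the paper's route: the split at $|Y|=h$ is unnecessary. For fixed $X\neq0$ the kernel decays like $|X|\,|Y|^{-3}$, so $\int_{\mathbb R}|K_{\mathcal T}(X,Y)|\,|Y|\dd Y=\frac12$ converges. You can therefore subtract $\gamma(y)$ on the whole truncated region, which is legitimate since $\int_{X^2+Y^2>\delta^2}|K_{\mathcal T}(X,Y)|\dd Y\le(\pi\delta)^{-1}$ and $X$ ranges over a bounded set. The Lipschitz bound then applies for all $Y$ and gives $\frac{a_2L}{2h^2}\le 2a_2L(h+d)^{-2}$ directly.

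Alternatively, keep your cutoff at $|Y|=h$ but add the two pieces pointwise in $X$ before integrating over $I$. The piece (1) integral is exactly $\frac{a_2}{\pi h^2}\bigl(\arctan(h/|X|)-\frac{h|X|}{X^2+h^2}\bigr)$, and the piece (2) bound is $\frac{a_2}{\pi h}\frac{|X|}{X^2+h^2}$. These sum to $\frac{a_2}{\pi h^2}\arctan(h/|X|)\le\frac{a_2}{2h^2}$.
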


\begin{proof}
Fix $z=(x,y) \in \mathbb R^2$, and set $d=\dist(z,I \times J)$. We split into two cases.

\textbf{Case 1.} $d \ge h$. Let $d_x=\dist(x,I)$ and $d_y=\dist(y,J)$, then we have $\max\{d_x,d_y\} \ge d/\sqrt{2}$.

Suppose first that $d_y \ge d/\sqrt{2}$. Note that the kernel satisfies $|K_{\mathcal T}(X,Y)|=\frac{1}{\pi} \frac{|XY|}{(X^2+Y^2)^2} \le 1/(4Y^2)$. Since $\|G\|_{L^\infty} \le 1$ and $\|\gamma\|_{L^\infty} \le a_2/h$, we have
\begin{align*}
    |\mathcal T(G \otimes \gamma)(x,y)| &\le \iint_{I \times J} |K_{\mathcal T}(x-s,y-t)| \, |G(s)| \, |\gamma(t)| \dd s \dd t \\
    &\le \frac{1}{4} d_y^{-2} \iint_{I \times J} |G(s)| \, |\gamma(t)| \dd s \dd t \le \frac{1}{2} d^{-2} \cdot (L \cdot a_1 h) \cdot \frac{a_2}{h} \\
    &=\frac{1}{2} a_1 a_2 L d^{-2} \le 2 a_1 a_2 L \, (d+h)^{-2}.
\end{align*}
The case $d_x \ge d/\sqrt{2}$ can be handled similarly, using instead $|K_{\mathcal T}(X,Y)| \le 1/(4X^2)$.

\textbf{Case 2.} $d \le h$. For $\delta>0$, let $\mathcal T_\delta$ denote the truncation of $\mathcal T$ with kernel $K_{\mathcal T_\delta}(X,Y)=K_{\mathcal T}(X,Y) \mathbf{1}_{X^2+Y^2>\delta^2}$. Then
\[ \mathcal T_\delta (G \otimes \gamma)(x,y)=\iint_{s^2+t^2>\delta^2} K_{\mathcal T}(s,t) G(x-s) \gamma(y-t) \dd s \dd t. \]
For fixed $s $, the set $\{t : s^2+t^2>\delta^2\}$ is symmetric with respect to $t$, whereas $K_{\mathcal T}(s,t)$ is odd in $t$. Thus $\int_{s^2+t^2>\delta^2} K_{\mathcal T}(s,t) \dd t=0$ and, consequently,
\[ \mathcal T_\delta (G \otimes \gamma)(x,y) = \int_{\mathbb R} G(x-s) \int_{s^2+t^2>\delta^2} K_{\mathcal T}(s,t) (\gamma(y-t)-\gamma(y)) \dd t \dd s. \]
By the mean value theorem, we have $|\gamma(y-t)-\gamma(y)| \le \|\gamma'\|_{L^\infty} |t| \le (a_2/h^2) |t|$. Hence, for every $s$, the absolute value of the inner integral is bounded by
\[ \frac{a_2}{h^2} \int_{s^2+t^2>\delta^2} |K_{\mathcal T}(s,t)|\, |t|\dd t \le \frac{a_2}{h^2} \int_{\mathbb R} \frac{1}{\pi} \frac{|s|t^2}{(s^2+t^2)^2} \dd t \le \frac{a_2}{2h^2}. \]
Since $\|G\|_{L^\infty} \le 1$ and $\supp G \subseteq I$ with $\mathrm{len}(I)=L$, integrating with respect to $s$ gives $|\mathcal T_\delta (G \otimes \gamma)(x,y)| \le a_2 L/(2h^2)$. Since this estimate is uniform in $\delta$, letting $\delta \downarrow 0$ and applying the dominated convergence theorem yields $|\mathcal T (G \otimes \gamma)(x,y)| \le a_2 L/(2h^2)$, which does not exceed $2a_1a_2 L (d+h)^{-2}$. This completes the proof.
\end{proof}

We also include the following refinement which uses cancellation in the horizontal variable. It gives an additional decay factor and will be used for the case of H\"older graphs in Section~\ref{subsec:holder-riesz}.

\begin{lemma}\label{lem:single+}
Under the hypotheses of Lemma~\ref{lem:single}, suppose in addition that $\int_{\mathbb R} G(s) \dd s=0$. Then, for every $z=(x,y) \in \mathbb R^2$ with $\dist(z, I \times J)>0$, we have
\[ |\mathcal T (G \otimes \gamma)(z)| \le C a_1 a_2 L^2 \dist(z, I \times J)^{-1} (h+\dist(z,I\times J))^{-2}, \]
where $C>0$ is a universal constant.
\end{lemma}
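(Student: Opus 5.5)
The plan is to exploit $\int G=0$ by subtracting a reference value of the kernel in the horizontal variable, then bound the resulting difference by the mean value theorem. Fix $z=(x,y)$ with $d:=\dist(z,I\times J)>0$. Let $s_0$ be the midpoint of $I$ (or any point of $I$). Since $\int G=0$, we may write
\[ \mathcal T(G\otimes\gamma)(z)=\iint_{I\times J}\bigl(K_{\mathcal T}(x-s,y-t)-K_{\mathcal T}(x-s_0,y-t)\bigr)G(s)\gamma(t)\dd s\dd t. \]
This representation is legitimate because $z\notin I\times J$, so the kernel is smooth on a neighborhood of $I\times J$ and no principal value is needed. By the mean value theorem, the bracket is bounded by $|s-s_0|\sup|\partial_X K_{\mathcal T}|\le L\sup_{w\in I\times J}|\nabla K_{\mathcal T}(z-w)|$, and $|\nabla K_{\mathcal T}(X,Y)|\lesssim (X^2+Y^2)^{-3/2}$. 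Note that $|z-w|\ge d$ for $w\in I\times J$.

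\textbf{Case 1: $d\ge h$.} The above gives
\[ |\mathcal T(G\otimes\gamma)(z)|\lesssim L\cdot d^{-3}\cdot\|G\|_{L^1}\|\gamma\|_{L^1}\le L\cdot d^{-3}\cdot L\cdot a_1h\cdot a_2/h=a_1a_2L^2d^{-3}. \]
Since $d\ge h$, we have $d^{-2}\le 4(h+d)^{-2}$, which yields the claimed bound $Ca_1a_2L^2d^{-1}(h+d)^{-2}$.

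\textbf{Case 2: $0<d<h$.} This is the main obstacle, because the crude bound $\|\gamma\|_{L^1}\,d^{-3}$ is too large. Here we need $a_1a_2L^2d^{-1}h^{-2}$. The point is to integrate the gradient bound more carefully in $t$ rather than taking a supremum. For fixed $s$, use
\[ \int_J |\nabla K_{\mathcal T}(x-s',y-t)|\dd t\lesssim \int_{\mathbb R}\bigl((x-s')^2+(y-t)^2\bigr)^{-3/2}\dd t\lesssim |x-s'|^{-2}, \]
which only helps when $d_x=\dist(x,I)$ is comparable to $d$. So split according to which of $d_x,d_y$ is at least $d/\sqrt2$.

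If $d_x\ge d/\sqrt2$, then $\|\gamma\|_{L^\infty}\le a_2/h$ together with the $t$-integral above gives a bound of order $L\cdot L\cdot (a_2/h)\,d_x^{-2}$. Combining this with the Case 1 type bound (using $\|\gamma\|_{L^1}\le a_1a_2$) gives the minimum of $a_2L^2h^{-1}d^{-2}$ and $a_1a_2L^2d^{-3}$. I am not sure this reaches $d^{-1}h^{-2}$ directly. I would instead try the symmetric trick: integrate in $s$ first, using the odd-in-$X$ structure or $\int_{\mathbb R}|\partial_XK|\dd X\lesssim |Y|^{-2}$.

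If $d_y\ge d/\sqrt2$, the analogous estimate with the roles swapped gives $L\cdot\|\gamma\|_{L^1}\,d_y^{-2}\lesssim a_1a_2L\,d^{-2}$ up to an additional factor of $L$. To get the exact form, the fallback is to interpolate between Lemma~\ref{lem:single}, which gives $a_1a_2L(h+d)^{-2}$, and the cancellation bound. Specifically, when $L\le d$ the cancellation gains the factor $L/d$, and when $L>d$ the bound of Lemma~\ref{lem:single} already implies the claim, since then $L\cdot L/d\ge L$. So in every case it suffices to prove the gain $L/d$ when $L\le d$.

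When $L\le d$, the $s$-variation is small compared with the distance, so the mean value step costs exactly $L/|z-w|\le L/d$ relative to the kernel itself. The absolute kernel estimates from Case 1 of Lemma~\ref{lem:single} (with $d_y$ or $d_x$), together with the oddness argument from its Case 2 adapted to the differenced kernel $K_{\mathcal T}(X,Y)-K_{\mathcal T}(X+\sigma,Y)$, which is still odd in $Y$, then give $a_1a_2L\,(h+d)^{-2}\cdot L/d$, as desired.
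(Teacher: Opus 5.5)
Your setup, Case~1 ($d\ge h$), and the reduction of the case $L>d$ to Lemma~\ref{lem:single} are correct. The gap is the remaining regime $L\le d<h$, which carries the whole content of the lemma there. Your last paragraph only asserts that the Case~1 kernel bounds plus the oddness argument ``adapted to the differenced kernel'' yield the gain $L/d$, and as stated neither ingredient does.

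The absolute bound $|K_{\mathcal T}(X,Y)-K_{\mathcal T}(X_0,Y)|\lesssim L|Y|^{-3}$ integrated against $\|\gamma\|_{L^1}\le a_1a_2$ gives $a_1a_2L^2d^{-3}$. That is too large by $(h/d)^2$, as you observed yourself.

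The full-line oddness estimate $\frac{a_2}{h^2}\int_{\mathbb R}|K_{\mathcal T}(X,Y)-K_{\mathcal T}(X_0,Y)|\,|Y|\dd Y$, with $X=x-s$ and $X_0=x-s_0$, gains only $L/\dist(x,I)$. Suppose $x\in I$, so $d=\dist(y,J)$. Then $X$ and $X_0$ can have opposite signs. In that case $|K_{\mathcal T}(X,Y)-K_{\mathcal T}(X_0,Y)|=|K_{\mathcal T}(X,Y)|+|K_{\mathcal T}(X_0,Y)|$, and the integral equals $\tfrac12+\tfrac12=1$. You therefore recover nothing beyond Lemma~\ref{lem:single}. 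Your heuristic that differencing ``costs $L/|z-w|$ relative to the kernel'' holds for the pointwise size bound. It fails after integrating against $|Y|$ over all of $\mathbb R$, because that integral passes near the singularity at the origin.

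To close the argument along your lines, split according to whether $d_x=\dist(x,I)\ge d/\sqrt2$ or $d_y=\dist(y,J)\ge d/\sqrt2$.

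In the first case, $X$, $X_0$ and every point between them have the same sign and modulus at least $d_x$. Hence $|K_{\mathcal T}(X,Y)-K_{\mathcal T}(X_0,Y)|\lesssim L(d_x^2+Y^2)^{-3/2}$, and the full-line oddness argument bounds the $t$-integral by $\frac{a_2}{h^2}\cdot\frac{CL}{d_x}$.

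In the second case, $y\notin J$, so $\gamma(y)=0$ and $|\gamma(t)|\le a_2|y-t|/h^2$. Keep the support restriction $|y-t|\ge d_y$ and use $|K_{\mathcal T}(X,Y)-K_{\mathcal T}(X_0,Y)|\lesssim L|Y|^{-3}$. This bounds the $t$-integral by $\frac{Ca_2L}{h^2}\int_{|Y|\ge d_y}Y^{-2}\dd Y\lesssim \frac{a_2L}{h^2d}$.

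In both cases, multiply by $\|G\|_{L^1}\le L$ and use $h^{-2}\le 4(h+d)^{-2}$ for $d<h$ to get the claim. With this, the reduction to $L\le d$ becomes unnecessary.

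The paper avoids the case analysis entirely. Since $\int G=0$, the function $\widetilde G(s)=L^{-1}\int_{-\infty}^s G$ is bounded by $1$ and supported in $I$, so Lemma~\ref{lem:single} applies to $\widetilde G\otimes\gamma$. Then $\mathcal T(G\otimes\gamma)=L\,\partial_x\mathcal T(\widetilde G\otimes\gamma)$, and $\mathcal T(\widetilde G\otimes\gamma)$ is harmonic off $I\times J$. The interior gradient estimate on $B(z,d/2)$ then supplies the factor $1/d$ in all regimes at once.
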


\begin{proof}
    Define $\widetilde G(s)=\frac{1}{L} \int_{-\infty}^s G(t) \dd t$. Since $G$ is supported in $I$ and has integral zero, $\widetilde G \in C_c^\infty(\mathbb R)$ is also supported in $I$. Moreover, $\|\widetilde G\|_{L^\infty}\le1$ and $L \widetilde G'=G$. Applying Lemma~\ref{lem:single} to $\widetilde G$ and $\gamma$ gives
    \begin{equation}\label{eq:intermediate-step}
        |\mathcal T(\widetilde G \otimes \gamma)(v)| \le 2a_1a_2L (h+\dist(v,I\times J))^{-2} \quad \mbox{for all } v \in \mathbb C.
    \end{equation}
    Since $\mathcal T$ commutes with distributional derivatives, we have $\mathcal T(G \otimes \gamma)=L \partial_x \mathcal T(\widetilde G \otimes \gamma)$. We then estimate this derivative using harmonicity. The kernel $K_{\mathcal T}$ is harmonic on $\mathbb C \setminus \{0\}$, so $\mathcal T(\widetilde G \otimes \gamma)$ is harmonic on $\mathbb C \setminus (I \times J)$. On this open set, the kernel integral is absolutely convergent and may be differentiated locally under the integral sign.

    Fix $z$ and write $d=\dist(z,I \times J)>0$. Then $B(z,d/2)$ is disjoint from $I \times J$, and $\dist(w,I \times J) \ge d/2$ for every $w \in B(z,d/2)$. The standard gradient estimate for harmonic functions yields
    \[ |\mathcal T(G \otimes \gamma)(z)| = L|\partial_x \mathcal T(\widetilde G \otimes \gamma)(z)| \le \frac{CL}{d}\sup_{w \in B(z,d/2)}|\mathcal T(\widetilde G \otimes \gamma)(w)| \le \frac{C a_1a_2L^2}{d(h+d)^2}, \]
    after adjusting the universal constant $C>0$. This proves the lemma.
\end{proof}

We now sum the estimate of Lemma~\ref{lem:single} over the $n$-th level of the tree. The fast-duration condition controls the horizontal size of each rectangle, while Lemma~\ref{lem:uniform-count} controls how many such rectangles can occur near a given point.

\begin{proof}[Proof of Proposition~\ref{prop:riesz-bounded}]
Recall from~\eqref{eq:b-n} that each $\mathsf b_n$ is smooth and compactly supported, so $\mathcal T \mathsf b_n \in L^\infty(\mathbb C)$. We claim that there exists a constant $C<\infty$ such that for all sufficiently large $n$,
\begin{equation}\label{eq:T-b-bounded}
    \|\mathcal T \mathsf b_n\|_{L^\infty} \le C \sqrt{n+1} (\sqrt{K}/m)^n.
\end{equation}
Since $\sqrt{K}/m<1$ by~\eqref{eq:parameter-conditions}, this estimate implies $\sum_{n=0}^\infty \|\mathcal T \mathsf b_n\|_{L^\infty}<\infty$, and hence $\sum_{n \ge 0} \mathcal T \mathsf b_n$ converges uniformly to a bounded function. On the other hand, by the $L^q$-convergence of $\sum_{n \ge 0} \mathsf b_n$ to $b$ (Lemma~\ref{lem:vertical-derivative}) and the boundedness of $\mathcal T$ on $L^q(\mathbb C)$, the same series converges to $\mathcal T b$ in $L^q(\mathbb C)$. Therefore its uniform limit agrees with $\mathcal T b$ almost everywhere. Identifying $\mathcal T b$ with this bounded representative, we conclude that $\mathcal T b \in L^\infty(\mathbb C)$.

Fix $n$ sufficiently large and $z \in \mathbb C$. For $P \in \mathscr T_n$ and $j \in \mathcal J_K$, we have $\|G_{P,j}\|_{L^\infty} \le 1$, $\supp G_{P,j} \subseteq I(P)$, and $\mathrm{len}(I(P)) \le \mathsf CH_n^2$ by Condition (c) of Proposition~\ref{prop:construct-tree}. Moreover, $\supp (\theta_P \phi_{P,j}) \subseteq \supp \phi_{P,j} \subseteq J(P;j)$ and $J(P;j)$ has length $(1+r)H_{n+1}$. By Lemma~\ref{lem:theta-derivatives}, the derivative bounds in Lemma~\ref{lem:single} hold for $\gamma=(\theta_P \phi_{P,j})'$ with $a_2=\max\{C_1,C_2\} m^{-n}$ and $h=H_{n+1}$. Applying Lemma~\ref{lem:single} with $a_1=1+r$ gives
\[ \left| \mathcal T (G_{P,j} \otimes (\theta_P \phi_{P,j})')(z) \right| \lesssim m^{-n} H_n^2 \cdot (H_{n+1}+\dist(z,I(P) \times J(P;j)))^{-2}. \]
Summing over all $P \in \mathscr T_n$ and $j \in \mathcal J_K$, we obtain
\[ |\mathcal T \mathsf b_n(z)| \lesssim m^{-n} H_n^2 \sum_{P \in \mathscr T_n} \sum_{j \in \mathcal J_K} (H_{n+1}+\dist(z,I(P) \times J(P;j)))^{-2}. \]

For $P \in \mathscr T_n$ and $j \in \mathcal J_K$, let $z_{P,j}$ be the center of the rectangle $I(P) \times J(P;j)$. Since $\mathrm{len}(I(P)) \le \mathsf C H_n^2$ and $J(P;j)$ has length $(1+r)H_{n+1}$, we have, for all sufficiently large $n$, that the diameter of this rectangle is at most $\mathsf CH_n^2+(1+r)H_{n+1} \le 2H_{n+1}$. It follows that
\[ |z-z_{P,j}| \le \dist(z,I(P) \times J(P;j))+H_{n+1}. \]

Define the discrete measure $\Lambda_n =\sum_{P \in \mathscr T_n} \sum_{j \in \mathcal J_K} \delta_{z_{P,j}}$, where $\delta_z$ is the Dirac measure at $z$. Then
\[ |\mathcal T \mathsf b_n(z)| \lesssim m^{-n} H_n^2 \sum_{P \in \mathscr T_n} \sum_{j \in \mathcal J_K} (H_{n+1}+|z-z_{P,j}|)^{-2}= m^{-n} H_n^2 \int_{\mathbb C} (H_{n+1}+|z-w|)^{-2} \dd \Lambda_n(w). \]

We now estimate the mass growth of $\Lambda_n$. Let $R>H_{n+1}$. If $z_{P,j} \in B(z,R)$, then $I(P) \times J(P;j)$ intersects $B(z,R)$. We therefore count the rectangles $I(P) \times J(P;j)$ that can intersect this ball. Recall that if $P$ has height range $[\ell H_n,(\ell+1)H_n]$, then $J(P;j) = (\ell K+j)H_{n+1}+H_{n+1}[-3r,1-2r]$. Note that the vertical intervals $\ell' H_{n+1}+H_{n+1}[-3r,1-2r]$ are translates of each other by integer multiples of $H_{n+1}$. Thus the projection of $B(z,R)$ onto the $y$-axis can intersect at most $(2R/H_{n+1})+4$ such intervals. Since $r \in (0,1/8)$, for each integer $\ell'$, there are at most two pairs $(\ell,j) \in \mathbb Z \times \mathcal J_K$ such that $\ell'=\ell K+j$.

Fix such a pair $(\ell,j)$. For sufficiently large $n$, we have $R>H_{n+1}>H_n^2$. If $R<1/2$, by Lemma~\ref{lem:uniform-count} and extending the interval if necessary, we derive that the number of $P \in \mathscr T_n$ with height range $[\ell H_n,(\ell+1)H_n]$ such that $I(P)$ intersects the $x$-projection of $B(z,R)$ is at most $C_B \sqrt{n+1} \frac{\sqrt{2R}}{H_n}$. If $R \ge 1/2$, the same upper bound also holds by applying Lemma~\ref{lem:uniform-count} to $W=[0,1]$.

Combining the vertical and horizontal enumerations gives
\begin{equation}\label{eq:Lambda_mass}
    \Lambda_n(B(z,R)) \le \left(\frac{2R}{H_{n+1}}+4 \right) \times 2 \times C_B \sqrt{n+1} \frac{\sqrt{2R}}{H_n} \lesssim C_B \sqrt{n+1} \frac{R^{3/2}}{H_n H_{n+1}}.
\end{equation}

For $j \ge 1$, define the dyadic annulus $A_j=\{w \in \mathbb C: 2^j H_{n+1} \le |w-z| < 2^{j+1} H_{n+1}\}$. Applying~\eqref{eq:Lambda_mass} to each ball $B(z,2^j H_{n+1})$, $j \ge 1$, yields
\begin{align*}
    &\quad \int_{\mathbb C} (H_{n+1}+|z-w|)^{-2} \dd \Lambda_n(w) \\
    &=\int_{B(z,2H_{n+1})} (H_{n+1}+|z-w|)^{-2} \dd \Lambda_n(w) + \sum_{j=1}^\infty \int_{A_j} (H_{n+1}+|z-w|)^{-2} \dd \Lambda_n(w) \\
    &\le H_{n+1}^{-2} \cdot\Lambda_n(B(z,2H_{n+1}))+\sum_{j=1}^\infty (2^j H_{n+1})^{-2} \cdot \Lambda_n(B(z,2^{j+1} H_{n+1})) \\
    &\lesssim C_B \sqrt{n+1} (H_n H_{n+1})^{-1} \left( H_{n+1}^{-2} \cdot (2H_{n+1})^{3/2} + \sum_{j=1}^\infty (2^j H_{n+1})^{-2} (2^{j+1} H_{n+1})^{3/2} \right) \\
    &\lesssim C_B \sqrt{n+1} H_n^{-1} H_{n+1}^{-3/2}.
\end{align*}
Substituting this into the preceding estimate for $\mathcal T \mathsf b_n$, we find
\[ |\mathcal T \mathsf b_n(z)| \lesssim m^{-n} H_n^2 \times C_B \sqrt{n+1} H_n^{-1} H_{n+1}^{-3/2} \lesssim \sqrt{n+1} (\sqrt{K}/m)^n. \]
Since the bound is uniform in $z$, this proves~\eqref{eq:T-b-bounded} and thereby completes the proof.
\end{proof}

\begin{proof}[Proof of Theorem~\ref{thm:construction}]
We continue to work on the almost-sure event fixed at the beginning of Section~\ref{sec:construct-measure}. By Lemmas~\ref{lem:measure-construction} and~\ref{lem:vertical-derivative}, the real-valued function $v=u-u_0$ belongs to $C_c(\mathbb C)$, and satisfies $D_x v=\mu+a \dd A$ and $D_y v=b \dd A$, where $\mu$ is a nonzero finite positive Radon measure with $\supp \mu \subseteq \Gamma_0 \subseteq \Gamma_B$, $a(x,y):=-\partial_x u_0(x,y)=-\omega(y) p_{I(\mathsf R)}(x) \in C_c^\infty(\mathbb C)$, and $b \in L_c^q(\mathbb C)$. Proposition~\ref{prop:riesz-bounded} gives $\mathcal R_x \mathcal R_y b \in L^\infty(\mathbb C)$. Hence Theorem~\ref{thm:construction} follows readily from Proposition~\ref{prop:criterion+}.
\end{proof}

\section{Non-removable H\"older graphs}\label{sec:holder-graph}

In this section, we give a deterministic implementation of the analytic criterion to construct non-removable $\alpha$-H\"older graphs for every $\alpha \in (0,2/3)$, thereby proving Theorem~\ref{thm:holder-graph}. The mass-redistribution argument is parallel to that of Section~\ref{sec:construct-measure}, but reaching H\"older exponents beyond those available from Brownian motion requires additional geometric input. Instead of relying on the Brownian counting estimate, we prescribe the horizontal scales of the retained intervals and carefully select descendants corresponding to both upcrossings and downcrossings. These descendants are arranged so that, in each row, their average midpoint coincides with the midpoint of the parent interval. We also choose each horizontal density $p_I$ to be symmetric about the midpoint of $I$. Together, these choices ensure that $G_{P,j}$ has integral zero (Lemma~\ref{lem:G-p-j-sum-zero}), allowing us to apply the refined single-term estimate of Lemma~\ref{lem:single+}.

We first construct the H\"older continuous function and its tree of retained intervals, then define the singular measure and vertical derivative, and finally prove the mixed Riesz transform bound. All parameters and auxiliary functions used in this section are chosen independently of those in Sections~\ref{sec:prob-input} and~\ref{sec:construct-measure}. We nevertheless deliberately reuse the notation to emphasize the common structure of the two constructions. All unspecified constants in this section are deterministic and may depend on the parameters and smooth functions chosen here.

\subsection{Construction of the H\"older continuous function}
\label{subsec:holder-construction}

Throughout the section, we fix $0<\alpha<2/3$. Choose $q>2$ such that $\alpha<q/(2q-1)$, and then choose $1-1/q<\beta<\min\{1,1/\alpha-1\}$.

Let $r=1/10$, and choose a sufficiently large positive integer $K$ such that $rK \in \mathbb N$. Write $M=\lceil K^\beta \rceil$, $N=(1+2r)K$, $T=(1+r)K$, and $S=2T+2MN$. Then we have
\begin{equation}\label{eq:parameter-choice}
    M>K^{1-1/q}, \qquad S^\alpha<K, \qquad K^2<2MS.
\end{equation}
Indeed, the second inequality follows from $\alpha(1+\beta)<1$, and the third inequality follows from $2MS>4M^2N>K^2$. We fix these parameters for the remainder of the section. The three inequalities in~\eqref{eq:parameter-choice} serve different purposes: the first gives the $L^q$-summability of the vertical derivative, the second ensures $\alpha$-H\"older continuity of the limiting graph, and the third gives the summability of the mixed Riesz transform estimates.

We now construct the desired function by an iterative replacement procedure. The basic building block is a piecewise linear function which joins its two endpoint values while making many excursions across a slightly enlarged vertical interval. At each stage, the function on every interval retained from the previous stage is replaced by an affine rescaling of this building block. We then retain certain subintervals from its oscillatory part and group them into rows according to their vertical ranges. Iterating this procedure yields a regular tree of intervals.

Define the piecewise linear function $v:[0,1] \to \mathbb R$ as follows:
\begin{equation*}
    v(x)=\begin{cases}
        \frac{S}{K}x, & \mbox{for } x \in [0,\frac{T}{S}], \\
        (1+r)-\frac{S}{K}(x-\frac{T+2mN}{S}), & \mbox{for } x \in [\frac{T+2mN}{S},\frac{T+(2m+1)N}{S}], \mbox{ where } 0 \le m \le M-1, \\
        -r+\frac{S}{K}(x-\frac{T+(2m+1)N}{S}), & \mbox{for } x \in [\frac{T+(2m+1)N}{S},\frac{T+(2m+2)N}{S}], \mbox{ where } 0 \le m \le M-1, \\
        (1+r)-\frac{S}{K}(x-\frac{T+2MN}{S}), & \mbox{for }  x \in [\frac{T+2MN}{S},\frac{T+2MN+rK}{S}], \\
        1, & \mbox{for } x \in [\frac{T+2MN+rK}{S},1].
    \end{cases}
\end{equation*}
In other words, starting from height 0, the graph first rises to height $1+r$, and makes $M$ complete oscillations between $1+r$ and $-r$, then descends to 1 and remains constant. On each linear piece, its slope is one of $\pm S/K$ and 0.

Define $v_+,v_-:[0,1] \to \mathbb R$ by $v_+=v$ and $v_-=1-v$. Then $v_+$ joins 0 to 1, whereas $v_-$ joins 1 to 0, and in either case $v_\pm([0,1])=[-r,1+r]$.

For $\bullet \in \{+,-\}$ and $j \in \widetilde{\mathcal J}_K:=\{-rK,-rK+1,\ldots,(1+r)K-1\}$, let $\mathcal D_{\bullet,j}$ denote the collection of closed subintervals $I$ of the oscillatory part $[\frac{T}{S},\frac{T+2MN}{S}]$ such that $v_\bullet|_I$ is linear and maps $I$ bijectively onto $[\frac{j}{K},\frac{j+1}{K}]$. The collection $\mathcal D_{\bullet,j}$ consists of exactly $2M$ intervals, corresponding to $M$ upward crossings and $M$ downward crossings. Every such interval has length $1/S$.

For $n \ge 0$, let $\mathscr B_n=\{[\frac{k-1}{S^n},\frac{k}{S^n}] : 1 \le k \le S^n\}$ be the collection of $S$-adic intervals of level $n$.

We define recursively a sequence of continuous piecewise linear functions $f_n$ and collections of intervals $\mathscr A_n \subseteq \mathscr B_n$. Start with $f_0(x)=x$, $P_0=[0,1]$, and $\mathscr A_0=\{P_0\}$. Suppose that $f_n:[0,1] \to \mathbb R$ and $\mathscr A_n$ have been defined so that, for every $P=[a,b] \in \mathscr A_n$, the function $f_n$ is linear on $P$ and $|f_n(b)-f_n(a)|=K^{-n}$. Let $y_P=\min\{f_n(a),f_n(b)\}$ and $\bullet_P \in \{+,-\}$ be the sign of $f_n(b)-f_n(a)$. On $P$, define $f_{n+1}(x)=y_P+K^{-n} v_{\bullet_P}(S^n(x-a))$. Outside $\bigcup_{P \in \mathscr A_n} P$, we set $f_{n+1}=f_n$. Since $v_+$ joins 0 to 1 and $v_-$ joins 1 to 0, this replacement preserves the endpoint values of $f_n$ on every $P\in\mathscr A_n$. In particular, $f_{n+1}$ is continuous.

For $P=[a,b] \in \mathscr A_n$ and $j \in \widetilde{\mathcal J}_K$, the descendants of $P$ in row $j$ are the $2M$ intervals $a+S^{-n} D$, where $D \in \mathcal D_{\bullet_P,j}$. We let $\mathscr A_{n+1}$ be the collection of all such descendants, as $P$ ranges over $\mathscr A_n$ and $j$ ranges over $\widetilde{\mathcal J}_K$. Every descendant has length $S^{-(n+1)}$ and belongs to $\mathscr B_{n+1}$. Moreover, if $Q$ is a descendant of $P$ in row $j$, then the values of $f_{n+1}$ at the endpoints of $Q$ are $y_P+j K^{-(n+1)}$ and $y_P+(j+1) K^{-(n+1)}$. In particular, $f_{n+1}$ is linear on each descendant and its values at the endpoints differ by $K^{-(n+1)}$ in absolute value. Thus every $P \in \mathscr A_n$ has exactly $2M$ descendants in each of the $N$ rows, and hence exactly $2MN$ descendants in total. Iterating the construction produces a $2MN$-regular tree\footnote{We remind the reader that the vertices of this tree are encoded by intervals, rather than by upboxes as in Section~\ref{sec:construct-measure}.} $\mathscr A=\bigcup_{n \ge 0} \mathscr A_n$ of intervals.

The choice of descendants has the following simple property, which will later imply the zero-integral identity in Lemma~\ref{lem:G-p-j-sum-zero}.

\begin{lemma}\label{lem:holder-center}
Let $P \in \mathscr A_n$ and $j \in \widetilde{\mathcal J}_K$. Denote the $2M$ descendants of $P$ in row $j$ by $P_{j,1},\ldots,P_{j,2M}$. Then $\frac{1}{2M} \sum_{\ell=1}^{2M} c(P_{j,\ell})=c(P)$, where $c(I)$ denotes the midpoint of an interval $I$.
\end{lemma}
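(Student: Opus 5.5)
The plan is to reduce to a computation for the building block $v$ and then read off the midpoints directly from the explicit piecewise linear formula.

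\textbf{Step 1: reduction to the unit interval.} Write $P=[a,b]$ with $b-a=S^{-n}$. By construction each $P_{j,\ell}$ equals $a+S^{-n}D$ with $D\in\mathcal D_{\bullet_P,j}$, and $c(P)=a+S^{-n}/2$. Midpoints commute with affine maps, so it suffices to show
\[
\frac{1}{2M}\sum_{D\in\mathcal D_{\bullet,j}} c(D)=\frac12 \qquad \text{for } \bullet\in\{+,-\} \text{ and } j\in\widetilde{\mathcal J}_K.
\]

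\textbf{Step 2: the case $\bullet=+$.} Work in units of $1/S$. On its linear pieces $v$ has slope $\pm S/K$, so a height increment of $1/K$ corresponds to horizontal length exactly $1/S$.
\begin{itemize}
\item The $m$-th downward piece starts at $x_m=(T+2mN)/S$ at height $1+r=T/K$. Since $T=(1+r)K$, it passes through $[\frac jK,\frac{j+1}K]$ on an interval with midpoint $x_m+(T-j-\tfrac12)/S$.
\item The $m$-th upward piece starts at $(T+(2m+1)N)/S$ at height $-r$. It crosses the same band on an interval with midpoint $(T+(2m+1)N+rK+j+\tfrac12)/S$.
\end{itemize}
Adding the two midpoints and using $T+rK=(1+2r)K=N$, the pair sums to
\[
\frac{2T+(4m+2)N}{S}.
\]
Averaging over $0\le m\le M-1$ replaces $4m+2$ by $2M$, so the average of each pair is $(2T+2MN)/S=1$, since $S=2T+2MN$. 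Hence the average midpoint is $\tfrac12$. The range $j\in\widetilde{\mathcal J}_K$ is used here only to guarantee that each of these $2M$ pieces really contains a full crossing of the band. This holds because the oscillations span $[-r,1+r]=[-rK/K,(1+r)K/K]$.

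\textbf{Step 3: the case $\bullet=-$.} Since $v_-=1-v$, an interval $D$ lies in $\mathcal D_{-,j}$ exactly when $v$ maps it linearly onto $[\frac{j'}K,\frac{j'+1}K]$ with $j'=K-1-j$. As $j$ ranges over $\widetilde{\mathcal J}_K$, so does $j'$, so $\mathcal D_{-,j}=\mathcal D_{+,j'}$ and Step 2 applies.

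The only delicate point is the offset bookkeeping in Step 2. In particular, the identity $T+rK=N$ is exactly what makes the down and up contributions combine symmetrically. I expect no further obstacle.
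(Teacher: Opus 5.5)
Your proof is correct, but after the common first step (affine reduction to $P=[0,1]$) it argues differently from the paper.

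The paper uses a symmetry argument. Because $S=2T+2MN$, the reflection $x\mapsto 1-x$ maps the oscillatory interval $[\frac{T}{S},\frac{T+2MN}{S}]$ onto itself and sends turning points to turning points of the same type. Hence $v_\pm(1-x)=v_\pm(x)$ there, so the reflection permutes $\mathcal D_{\bullet,j}$ and the multiset of midpoints is symmetric about $1/2$. Concretely, this pairs the $m$-th downward crossing with the $(M-1-m)$-th upward crossing, and each such pair already averages to exactly $1/2$.

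You instead pair the $m$-th downward crossing with the $m$-th upward crossing of the same oscillation. Those pair sums $(2T+(4m+2)N)/S$ are not individually $1$; only their average over $m$ equals $1$. Your sentence ``the average of each pair is $(2T+2MN)/S=1$'' should be reworded to say this.

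What each route buys:
\begin{itemize}
\item The paper's argument is shorter, needs only $S=2T+2MN$ plus the evident symmetry of the zigzag, and treats $v_+$ and $v_-$ at once.
\item Yours computes the crossing locations explicitly, additionally using $T+rK=N$. It handles $v_-$ through the relabeling $\mathcal D_{-,j}=\mathcal D_{+,K-1-j}$, which is valid because $j\mapsto K-1-j$ preserves $\widetilde{\mathcal J}_K$.
\end{itemize}
Both establish the lemma.
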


\begin{proof}
By an affine rescaling, it suffices to prove the assertion for $P=[0,1]$. For either $v_+$ or $v_-$, the oscillatory interval is $[\frac{T}{S},\frac{T+2MN}{S}]$, whose midpoint is $1/2$. The result follows by symmetry.
\end{proof}

We next show the convergence and H\"older regularity of the resulting function.

\begin{lemma}\label{lem:holder-regularity}
The sequence $(f_n)_{n \ge 0}$ converges uniformly to an $\alpha$-H\"older continuous function $f:[0,1] \to \mathbb R$.
\end{lemma}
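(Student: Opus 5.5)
We first establish uniform convergence and then a uniform Lipschitz bound on each $f_n$ at the appropriate scale; the H\"older bound then follows by the standard scale-splitting argument.

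\textbf{Convergence.} The functions $f_{n+1}$ and $f_n$ differ only on intervals $P\in\mathscr A_n$. On such a $P$, $f_n$ is linear between values differing by $K^{-n}$, so its range is an interval of length $K^{-n}$ starting at $y_P$. The replacement $y_P+K^{-n}v_{\bullet_P}(S^n(x-a))$ takes values in $y_P+K^{-n}[-r,1+r]$. Hence $\|f_{n+1}-f_n\|_{L^\infty}\le (1+2r)K^{-n}$. This is summable, so $f_n\to f$ uniformly, and moreover $\|f-f_n\|_{L^\infty}\lesssim K^{-n}$.

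\textbf{Slope bound for $f_n$.} We claim $f_n$ is piecewise linear with slopes bounded by $(S/K)^n$. For $n=0$ the slope is $1$. Passing from $f_n$ to $f_{n+1}$, a new piece on $P\in\mathscr A_n$ has slope $K^{-n}S^n\cdot v'$, with $|v'|\in\{0,S/K\}$, hence $|\text{slope}|\le (S/K)^{n+1}$. Outside $\bigcup\mathscr A_n$ the function is unchanged, and its slopes there were already at most $(S/K)^n\le (S/K)^{n+1}$, since $S>K$. Therefore $|f_n(x)-f_n(y)|\le (S/K)^n|x-y|$.

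\textbf{H\"older estimate.} Fix $x\ne y$ with $|x-y|\le 1$, and choose $n\ge0$ with $S^{-(n+1)}<|x-y|\le S^{-n}$. Then
\[ |f(x)-f(y)|\le 2\|f-f_n\|_{L^\infty}+|f_n(x)-f_n(y)|\lesssim K^{-n}+(S/K)^nS^{-n}\lesssim K^{-n}. \]
By~\eqref{eq:parameter-choice}, $S^\alpha<K$, so $K^{-n}\le S^{-\alpha n}\le S^{\alpha}|x-y|^\alpha$. This gives $|f(x)-f(y)|\lesssim |x-y|^\alpha$ with a constant depending only on the fixed parameters.

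The only point needing care is the tail estimate $\|f-f_n\|\lesssim K^{-n}$ together with the slope bound. Both are routine once the range containment $v_\pm([0,1])=[-r,1+r]$ is used, so no real obstacle is expected. The inequality $S^\alpha<K$ is exactly what makes the split at scale $S^{-n}$ close.
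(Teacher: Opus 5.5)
Your proof is correct and rests on the same two estimates as the paper's: the geometric tail bound $\|f-f_n\|_{L^\infty}\lesssim K^{-n}$ and the slope bound $(S/K)^{n}$ for the piecewise linear approximants, combined at the scale $|x-y|\approx S^{-n}$ via $S^\alpha<K$. The only difference is packaging: the paper bounds the oscillation of $f$ on each $S$-adic interval of level $n$ by $5K^{-n}$, treating intervals in $\mathscr A_n$ separately from frozen intervals (where $f=f_{j+1}$ is linear), whereas your global splitting $|f(x)-f(y)|\le 2\|f-f_n\|_{L^\infty}+|f_n(x)-f_n(y)|$ avoids that case distinction.
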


\begin{proof}
Since $v([0,1])=[-r,1+r]$, replacing $f_n$ by $f_{n+1}$ on an interval $P \in \mathscr A_n$ changes the function by at most $(1+r)K^{-n}$. Hence $\|f_{n+1}-f_n\|_{L^\infty([0,1])} \le (1+r)K^{-n}$, and thus $f_n$ converges uniformly to a continuous function $f:[0,1]\to\mathbb R$.

We next prove the H\"older estimate. We first claim that the oscillation of $f$ on every $S$-adic interval $P \in \mathscr B_n$ is at most $5 K^{-n}$.

Suppose first that $P \in \mathscr A_n$, then the oscillation of $f$ on $P$ is at most $K^{-n}+2 \sum_{j \ge n} (1+r)K^{-j} \le 5K^{-n}$. If instead $P \notin \mathscr A_n$, then for some $j \le n-1$ we have that $f=f_{j+1}$ on $P$. On this interval, $f_{j+1}$ is linear with slope of absolute value at most $(S/K)^{j+1}$, hence the oscillation is at most $(S/K)^{j+1} \cdot S^{-n} \le K^{-n}<5K^{-n}$.

Now, for $0 \le x<y \le 1$, choose $n \ge 0$ so that $S^{-(n+1)} \le |x-y| \le S^{-n}$. Then the interval $[x,y]$ is contained in the union of at most two adjacent $S$-adic intervals in $\mathscr B_n$, and consequently
\[ |f(x)-f(y)| \le 10K^{-n} \le 10S^{-\alpha n} \le 10 S^\alpha \cdot |x-y|^\alpha, \]
where we used~\eqref{eq:parameter-choice}. This shows that $f$ is $\alpha$-H\"older continuous.
\end{proof}

\subsection{The singular measure and vertical derivative}
\label{subsec:construct-measure-holder}

Fix the function $f$ as constructed in Section~\ref{subsec:holder-construction}. For $n \ge 0$, write $L_n=S^{-n}$ and $H_n=K^{-n}$. By construction, if $P \in \mathscr A_n$, then every subsequent replacement preserves the values at the endpoints of $P$. Hence these values agree with those of the limiting function $f$. In particular, the values of $f$ at the endpoints of $P$ are $y_P$ and $y_P+H_n$. Moreover, if $Q$ is a descendant of $P$ in row $j \in \widetilde{\mathcal J}_K$, then $y_Q=y_P+j H_{n+1}$.

We now adapt the mass-redistribution construction of Section~\ref{subsec:singular-measure} to the deterministic tree $\mathscr A$. As before, the vertical weights enforce mass conservation, whereas the horizontal densities are chosen symmetric about the midpoints of the corresponding intervals.

Choose $0<\delta<r/10$ and a nonnegative function $\eta \in C_c^\infty(\mathbb R)$ with $\supp \eta \subseteq [-\delta,\delta]$ and $\int_{\mathbb R} \eta(x) \dd x=1$. Define $\phi=\mathbf 1_{[0,1]}*\eta$, then
\begin{equation}\label{eq:def-phi-holder}
    \supp \phi \subseteq [-\delta,1+\delta], \qquad 0 \le \phi \le 1, \qquad \sum_{j \in \mathbb Z} \phi(t-j)=1 \quad \mbox{for } t \in \mathbb R.
\end{equation}

For $n \ge 0$ and an interval $P \in \mathscr A_n$, define
\begin{equation}\label{eq:holder-phi-Pj}
    \phi_{P,j}(y)=\phi((y-y_P)/H_{n+1}-j), \qquad \mbox{for } j \in \widetilde{\mathcal J}_K=\{-rK,-rK+1,\ldots,(1+r)K-1\}.
\end{equation}
We also write $J(P;j)=y_P+j H_{n+1}+H_{n+1} [-\delta,1+\delta]$, an interval of length $(1+2\delta) H_{n+1}$, so that $\supp \phi_{P,j} \subseteq J(P;j)$.

Choose a nonzero, nonnegative $\omega \in C_c^\infty(\mathbb R)$ with $\supp \omega \subseteq (-\delta,1+\delta)$, and set $\theta_{P_0}=\omega$. We define the functions $\theta_P$ recursively down the tree. If $P \in \mathscr A_n$ and $Q$ is one of the $2M$ descendants of $P$ in row $j$, we define
\begin{equation}\label{eq:theta-child-holder}
    \theta_Q(y) = \frac{1}{2M} \theta_P(y)\phi_{P,j}(y).
\end{equation}

\begin{lemma}\label{lem:conservation-holder}
    The following properties hold.
    \begin{enumerate}[(a)]
        \item Let $n \ge 1$ and $Q \in \mathscr A_n$, then $\supp \theta_Q \subseteq y_Q+H_n[-\delta,1+\delta]$.
        \item For any $P \in \mathscr A$, we have $\sum_{j\in \widetilde{\mathcal J}_K} \phi_{P,j}(y)=1$ for every $y \in \supp \theta_P$.
        \item For any $P \in \mathscr A$, we have $\sum_Q \theta_Q(y)=\theta_P(y)$, where the sum is taken over all $2MN$ descendants $Q$ of $P$. Consequently, $\sum_{P \in \mathscr A_n} \theta_P(y)=\omega(y)$ for every $n \ge 0$.
    \end{enumerate} 
\end{lemma}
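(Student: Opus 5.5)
The plan is to follow the proof of Lemma~\ref{lem:conservation} almost verbatim, adapting it to the new parametrization by $y_P$ and the new window $[-\delta,1+\delta]$.

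\textbf{Part (a).} Let $P\in\mathscr A_{n-1}$ be the parent of $Q$, with $Q$ in row $j$. By~\eqref{eq:theta-child-holder},
\[
\supp\theta_Q\subseteq\supp\phi_{P,j}\subseteq J(P;j)=y_P+jH_n+H_n[-\delta,1+\delta].
\]
Since $y_Q=y_P+jH_n$ (as noted at the start of Section~\ref{subsec:construct-measure-holder}), the right-hand side equals $y_Q+H_n[-\delta,1+\delta]$.

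\textbf{Part (b).} Fix $P\in\mathscr A_n$ and extend $\phi_{P,j}$ to all $j\in\mathbb Z$ by~\eqref{eq:holder-phi-Pj}. The partition identity in~\eqref{eq:def-phi-holder} then gives $\sum_{j\in\mathbb Z}\phi_{P,j}\equiv1$. It remains to show that the indices $j\notin\widetilde{\mathcal J}_K$ contribute nothing on $\supp\theta_P$.

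We first locate $\supp\theta_P$.
\begin{itemize}
\item For $P=P_0$ we have $y_{P_0}=0$, $H_0=1$ and $\supp\omega\subseteq(-\delta,1+\delta)$.
\item For $n\ge1$, part (a) gives $\supp\theta_P\subseteq y_P+H_n[-\delta,1+\delta]$.
\end{itemize}
In both cases, with $t=(y-y_P)/H_{n+1}$, every $y\in\supp\theta_P$ satisfies $t\in[-\delta K,(1+\delta)K]$.

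Now take $j\notin\widetilde{\mathcal J}_K$. Then either $j\le -rK-1$ or $j\ge (1+r)K$.
\begin{itemize}
\item If $j\le -rK-1$, then $t-j\ge (r-\delta)K+1>1+\delta$, using $\delta<r/10$.
\item If $j\ge(1+r)K$, then $t-j\le-(r-\delta)K<-\delta$, provided $K\ge1/(r-\delta)$, which holds since $K$ was chosen large.
\end{itemize}
In either case $\phi(t-j)=0$, because $\supp\phi\subseteq[-\delta,1+\delta]$. This is the only step needing care: one must check that $rK\ge1$ and that the margin $r-\delta$ absorbs the $\delta$-fattening. This is where $\delta<r/10$ is used.

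\textbf{Part (c).} Summing~\eqref{eq:theta-child-holder} over the $2M$ descendants in each row $j$ gives
\[
\sum_Q\theta_Q=\theta_P\sum_{j\in\widetilde{\mathcal J}_K}\phi_{P,j}.
\]
By (b), this equals $\theta_P$ on $\supp\theta_P$; off $\supp\theta_P$ both sides vanish. Summing over $P\in\mathscr A_n$ and inducting on $n$ from $\theta_{P_0}=\omega$ yields $\sum_{P\in\mathscr A_n}\theta_P=\omega$ for every $n\ge0$.
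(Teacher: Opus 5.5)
Your proof is correct and follows essentially the same route as the paper, which also reduces everything to the argument of Lemma~\ref{lem:conservation}. The only new check is for (b), namely that $\supp\theta_P\subseteq y_P+H_{n+1}[-rK+\delta,(1+r)K-\delta]$. This is exactly what your verification shows: the window $t\in[-\delta K,(1+\delta)K]$ misses the supports of $\phi(\cdot-j)$ for $j\notin\widetilde{\mathcal J}_K$ once $(r-\delta)K\ge\delta$.
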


\begin{proof}
    The proof is almost identical to that of Lemma~\ref{lem:conservation}. For (b), one only needs to verify $\supp \theta_P \subseteq y_P+H_{n+1}[-rK+\delta,(1+r)K-\delta]$, which follows since $K$ is sufficiently large and $\delta<r/10$.
\end{proof}

The following lemma is proved in the same way as Lemma~\ref{lem:theta-derivatives}.

\begin{lemma}\label{lem:theta-derivatives-holder}
For every integer $k \ge 0$, there is a constant $C_k<\infty$ such that, for every $n \ge 0$, every interval $P \in \mathscr A_n$, and every $j \in \widetilde{\mathcal J}_K$,
\[ \|(\theta_P \phi_{P,j})^{(k)}\|_{L^\infty} \le C_k (2M)^{-n} H_{n+1}^{-k}. \]
\end{lemma}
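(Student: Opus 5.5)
The plan is to follow the proof of Lemma~\ref{lem:theta-derivatives} step by step, with $m$ replaced by $2M$ and the Brownian row structure replaced by~\eqref{eq:holder-phi-Pj}. First I will prove the auxiliary bound
\[ \|\theta_P^{(k)}\|_{L^\infty} \le C_k' (2M)^{-n} H_n^{-k} \qquad \mbox{for } P \in \mathscr A_n. \]
Let $P_0,P_1,\ldots,P_n=P$ be the chain of ancestors, where $P_{i+1}$ is a descendant of $P_i$ in row $j_i$. Iterating~\eqref{eq:theta-child-holder} gives
\[ \theta_P(y)=(2M)^{-n}\,\omega(y)\prod_{i=0}^{n-1}\phi_{P_i,j_i}(y). \]
By~\eqref{eq:holder-phi-Pj}, each factor is $\phi$ composed with an affine map of slope $H_{i+1}^{-1}$. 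Hence $\|\phi_{P_i,j_i}^{(a)}\|_{L^\infty}\le c_a H_{i+1}^{-a}$, with $c_a=\|\phi^{(a)}\|_{L^\infty}$ depending only on $\phi$; this is finite since $\phi=\mathbf 1_{[0,1]}*\eta$ is smooth. Also $0\le\phi\le 1$ by~\eqref{eq:def-phi-holder}.

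The key step is the Leibniz expansion of the $k$-th derivative of this product of $n+1$ factors. Each term distributes $k$ derivatives among the factors. A derivative landing on $\phi_{P_i,j_i}$ costs at most a constant times $H_{i+1}^{-1}$, a derivative landing on $\omega$ costs $O(1)$, and factors that are not differentiated are bounded by $1$. The number of factors is $n+1$, which grows with $n$, so the terms must not be counted one by one. Instead I group them as in the earlier proof: the total is bounded by $C(1+\sum_{i=1}^n H_i^{-1})^k$, where $C$ depends only on $k$, $\max_{a\le k}c_a$ and the derivatives of $\omega$ up to order $k$. To see this, expand the $k$-fold product of the operator ``differentiate one factor'' and note that each ordered choice of factors contributes at most $\prod(\text{cost})$.

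Since $H_i=K^{-i}$ forms a geometric sequence with $K\ge2$, we have $1+\sum_{i=1}^nH_i^{-1}\le 2H_n^{-1}$, and the auxiliary bound follows.

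Finally, write $(\theta_P\phi_{P,j})^{(k)}=\sum_{i=0}^k\binom{k}{i}\theta_P^{(i)}\phi_{P,j}^{(k-i)}$ and use $\|\phi_{P,j}^{(a)}\|_{L^\infty}\le c_aH_{n+1}^{-a}$. Together with $H_n^{-i}\le H_{n+1}^{-i}$, this gives the stated bound $C_k(2M)^{-n}H_{n+1}^{-k}$.

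The only step needing any care is the combinatorial bound on the Leibniz expansion, since the number of factors is unbounded. The grouping into $(\sum_i H_i^{-1})^k$ handles this, because the constant then depends only on $k$ and not on $n$.
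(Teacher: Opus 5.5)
Your proposal is correct and follows the paper's argument: the paper proves this lemma exactly as Lemma~\ref{lem:theta-derivatives}. That proof uses the product formula $\theta_P=(2M)^{-n}\omega\prod_{i=0}^{n-1}\phi_{P_i,j_i}$, bounds the $k$-th derivative by a constant times $(1+\sum_{i=1}^n H_i^{-1})^k\lesssim H_n^{-k}$, and finishes with the Leibniz rule for $\theta_P\phi_{P,j}$ using $H_n\ge H_{n+1}$. Your explicit justification of the $n$-uniform bound, summing over ordered $k$-tuples of factors to obtain $(\sum_l h_l^{-1})^k$, is a detail the paper leaves implicit.
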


Fix an even, nonnegative $p \in C_c^\infty((-1/2,1/2))$ with $\int_{\mathbb R} p(x) \dd x=1$. For every time interval $I$ of length $L$ and midpoint $c(I)$, define $p_I(x)=L^{-1} p(\frac{x-c(I)}{L})$ and write $F_I(x)=\int_{-\infty}^x p_I(t) \dd t$. Then $p_I$ is a smooth probability density supported in $I$ and symmetric about $c(I)$.

Given $P \in \mathscr A_n$ and $j \in \widetilde{\mathcal J}_K$, denote the $2M$ descendants of $P$ in row $j$ by $P_{j,1},\ldots,P_{j,2M}$. Define
\begin{equation}\label{eq:G-p-j-holder}
    G_{P,j}(x)=\frac{1}{2M} \sum_{\ell=1}^{2M} F_{P_{j,\ell}}(x)-F_P(x).
\end{equation}
Then $G_{P,j}$ is supported in $P$, and $|G_{P,j}(x)| \le 1$. The symmetry of $p_I$, together with the identity of Lemma~\ref{lem:holder-center}, gives the additional property that will be needed below.

\begin{lemma}\label{lem:G-p-j-sum-zero}
    For each $P \in \mathscr A_n$ and $j \in \widetilde{\mathcal J}_K$, we have $\int_{\mathbb R} G_{P,j}(x) \dd x=0$.
\end{lemma}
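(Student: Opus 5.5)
The identity reduces to computing $\int (F_I - \mathbf 1_{[c(I),\infty)})\,\dd x$ for a single interval $I$. The plan is to show that, for a symmetric density supported in a compact interval, this integral is essentially $-c(I)$ up to a fixed additive constant. Then the midpoint identity of Lemma~\ref{lem:holder-center} makes the contributions cancel.

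Fix a large number $A$ with $P \subseteq [-A,A]$. Every descendant $P_{j,\ell}$ is contained in $P$, being of the form $a+S^{-n}D$ with $D\subseteq[0,1]$. Hence, for every interval $I\subseteq P$, we have $F_I=0$ on $(-\infty,-A]$ and $F_I=1$ on $[A,\infty)$. Integrating by parts, or using Fubini, gives
\[ \int_{-A}^{A} F_I(x)\dd x = A - \int_{\mathbb R} t\,p_I(t)\dd t = A - c(I). \]
The last equality holds because $p_I$ is a probability density symmetric about $c(I)$, so its mean is $c(I)$.

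Since $G_{P,j}$ is supported in $P\subseteq[-A,A]$, it follows that
\[ \int_{\mathbb R} G_{P,j}\dd x = \frac{1}{2M}\sum_{\ell=1}^{2M}\bigl(A-c(P_{j,\ell})\bigr) - \bigl(A-c(P)\bigr) = c(P)-\frac{1}{2M}\sum_{\ell=1}^{2M} c(P_{j,\ell}). \]
By Lemma~\ref{lem:holder-center}, this difference is zero.

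There is no real obstacle here. The only point to check is that the descendants lie inside $P$, which justifies using a common window $[-A,A]$. This containment is immediate from the construction, since descendants are subintervals of the oscillatory part of $P$.
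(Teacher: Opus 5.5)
Your argument is correct and is essentially the paper's proof. The paper integrates by parts to get $\int_{\mathbb R}(F_{I_1}-F_{I_2})\,\dd x=c(I_2)-c(I_1)$ from the symmetry of $p_I$ and then applies Lemma~\ref{lem:holder-center}; your window computation $\int_{-A}^{A}F_I\,\dd x=A-c(I)$ is the same computation. The only blemish is expository: your opening framing via $\int(F_I-\mathbf 1_{[c(I),\infty)})\,\dd x$ is never used, and that integral actually vanishes for symmetric $p_I$, so it is not ``$-c(I)$ up to a constant'' and can simply be dropped.
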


\begin{proof}
    For every time interval $I$, since $p_I(x)$ is symmetric with respect to $c(I)$, we have $\int_{\mathbb R} x \,p_I(x) \dd x=c(I)$. It follows from integration by parts that, for two intervals $I_1$ and $I_2$,
    \[ \int_{\mathbb R} (F_{I_1}(x)-F_{I_2}(x)) \dd x=\int_{\mathbb R} x \, (p_{I_2}(x)-p_{I_1}(x)) \dd x=c(I_2)-c(I_1), \]
    and the assertion follows from~\eqref{eq:G-p-j-holder} and Lemma~\ref{lem:holder-center}.
\end{proof}

For each $n \ge 0$, define
\begin{equation}\label{eq:def-un-holder}
    u_n(x,y)=\sum_{P \in \mathscr A_n} \theta_P(y) F_P(x).
\end{equation}

\begin{lemma}\label{lem:measure-construction-holder}
    The sequence $(u_n)_{n \ge 0}$ converges uniformly to a continuous function $u$, with $u-u_0$ continuous and compactly supported. Moreover, $\mu:=D_x u$ is a nonzero finite positive Radon measure such that $\supp \mu \subseteq \Gamma_f$.
\end{lemma}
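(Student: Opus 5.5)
The proof will follow that of Lemma~\ref{lem:measure-construction} essentially line by line, with the deterministic tree $\mathscr A$ in place of $\mathscr T$.

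\textbf{Step 1: the telescoping identity.} Substitute \eqref{eq:theta-child-holder} into \eqref{eq:def-un-holder} and use Lemma~\ref{lem:conservation-holder}(b). This gives
\[ u_{n+1}(x,y)-u_n(x,y)=\sum_{P\in\mathscr A_n}\sum_{j\in\widetilde{\mathcal J}_K}\theta_P(y)\phi_{P,j}(y)G_{P,j}(x). \]

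\textbf{Step 2: uniform convergence.} The intervals in $\mathscr A_n$ are $S$-adic intervals of level $n$, so they have disjoint interiors. Hence, for fixed $x$, at most one $P$ contributes. Since $\delta<1/2$, for fixed $y$ at most two indices $j$ have $\phi_{P,j}(y)\neq0$. Using $|G_{P,j}|\le1$ and Lemma~\ref{lem:theta-derivatives-holder} with $k=0$, we get
\[ \|u_{n+1}-u_n\|_{L^\infty}\le 2C_0(2M)^{-n}, \]
which is summable. All increments are supported in $[0,1]\times\supp\omega$. Therefore $u_n\to u$ uniformly, and $u-u_0\in C_c(\mathbb C)$.

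\textbf{Step 3: positivity and mass.} We have
\[ D_xu_n=\sum_{P\in\mathscr A_n}\theta_P(y)p_P(x)\dd A\ge0. \]
By Lemma~\ref{lem:conservation-holder}(c) its total mass is $\int\omega>0$, and it is supported in the fixed compact set $[0,1]\times\supp\omega$. Distributional convergence then gives that $\mu=D_xu$ is a positive measure with the same total mass (no mass escapes because the supports are uniformly compact). So $\mu$ is nonzero and finite.

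\textbf{Step 4: the support.} This is the only point needing new input. Set
\[ A_n=\bigcup_{P\in\mathscr A_n}P\times\supp\theta_P. \]
Descendants satisfy $Q\subseteq P$, since they lie in the oscillatory part of $P$, and $\supp\theta_Q\subseteq\supp\theta_P$. So the $A_n$ are nested compact sets, and $\supp\mu\subseteq\bigcap_nA_n$.

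Take $(t,y)\in\bigcap_n A_n$, and for each $n$ choose $P_n\ni t$ with $y\in\supp\theta_{P_n}$. By Lemma~\ref{lem:conservation-holder}(a), $|y-y_{P_n}|\le(1+\delta)H_n$. We know that $y_{P_n}$ is the value of $f$ at an endpoint $a_n$ of $P_n$, and $|t-a_n|\le L_n=S^{-n}$. By continuity of $f$ (Lemma~\ref{lem:holder-regularity}), $y=\lim y_{P_n}=\lim f(a_n)=f(t)$. Hence $(t,y)\in\Gamma_f$.

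The main obstacle is Step 4, and it is mild. Unlike the Brownian case, there is no duration condition to invoke. Instead the inclusion $Q\subseteq P$ holds by construction and the lengths shrink like $S^{-n}$. We also need the endpoint values of $P$ to agree with those of $f$, which was noted at the start of Section~\ref{subsec:construct-measure-holder}.
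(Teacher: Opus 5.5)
Your proposal is correct and follows the paper's proof essentially line by line. The paper also transfers the convergence, positivity and mass arguments verbatim from Lemma~\ref{lem:measure-construction}, then proves $\bigcap_n A_n\subseteq\Gamma_f$ exactly as you do: it takes an endpoint $t_n$ of $P_n$ with $f(t_n)=y_{P_n}$, uses $|t-t_n|\le L_n$ and $|y-f(t_n)|\le(1+\delta)H_n$, and concludes by continuity of $f$ (your explicit remarks that $Q\subseteq P$ and that the $S$-adic intervals have disjoint interiors just spell out what the paper leaves implicit).
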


\begin{proof}
    With Lemmas~\ref{lem:conservation-holder} and~\ref{lem:theta-derivatives-holder} in place of Lemmas~\ref{lem:conservation} and~\ref{lem:theta-derivatives}, the proof of Lemma~\ref{lem:measure-construction} applies without change and gives the uniform convergence of $u_n$, the compact support of $u-u_0$, and the fact that $\mu=D_x u$ is a nonzero finite positive Radon measure supported in $\cap_n A_n$, where $A_n=\bigcup_{P \in \mathscr A_n} P \times \supp \theta_P$. It suffices to show that $\cap_n A_n \subset \Gamma_f$. Take $(t,y) \in \cap_n A_n$. For each $n$, choose $P_n \in \mathscr A_n$ such that $(t,y) \in P_n \times \supp \theta_{P_n}$ and let $t_n$ denote an endpoint of $P_n$ such that $f(t_n)=y_{P_n}$. Since $|t-t_n| \le L_n$ and $|y-f(t_n)| \le (1+\delta)H_n$ by Lemma~\ref{lem:conservation-holder}(a), it follows from the continuity of $f$ that $y=f(t)$. This completes the proof.
\end{proof}

The vertical derivative satisfies the same estimates as in Lemma~\ref{lem:vertical-derivative}, with $m$ replaced by $2M$.

\begin{lemma}\label{lem:vertical-derivative-holder}
    There exists a function $b \in L_c^q(\mathbb C)$ such that $D_y (u-u_0)=b \dd A$.
\end{lemma}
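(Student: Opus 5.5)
The plan is to follow the proof of Lemma~\ref{lem:vertical-derivative} essentially verbatim, replacing $m$ by $2M$, the tree $\mathscr T$ by $\mathscr A$, and $\mathcal J_K$ by $\widetilde{\mathcal J}_K$.

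First, write $u_N-u_0=\sum_{n=0}^{N-1}(u_{n+1}-u_n)$. Substituting \eqref{eq:theta-child-holder} into \eqref{eq:def-un-holder} and using Lemma~\ref{lem:conservation-holder}(b), exactly as in \eqref{eq:un-increment}, gives
\[ u_{n+1}-u_n=\sum_{P\in\mathscr A_n}\sum_{j\in\widetilde{\mathcal J}_K}\theta_P(y)\phi_{P,j}(y)G_{P,j}(x). \]
Differentiating in $y$ then yields
\[ \mathsf b_n(x,y)=\sum_{P\in\mathscr A_n}\sum_{j\in\widetilde{\mathcal J}_K}G_{P,j}(x)(\theta_P\phi_{P,j})'(y). \]

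Second, bound $\mathsf b_n$ pointwise. Distinct intervals of $\mathscr A_n$ are distinct $S$-adic intervals of level $n$, so they have disjoint interiors. Hence for fixed $x$ only one $P$ contributes. Since $\delta<1/2$, the supports $J(P;j)$ overlap at most two at a time, so for fixed $y$ at most two indices $j$ contribute. Combining $|G_{P,j}|\le 1$ with Lemma~\ref{lem:theta-derivatives-holder} for $k=1$ gives
\[ |\mathsf b_n|\le 2C_1(2M)^{-n}H_{n+1}^{-1}. \]

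Third, estimate the area of $\supp\mathsf b_n$. Its $x$-projection lies in $[0,1]$. For each fixed $x$, the $y$-section is contained in $\bigcup_j J(P;j)$, whose length is at most $N(1+2\delta)H_{n+1}\lesssim H_n$. Therefore
\[ \|\mathsf b_n\|_{L^q}^q\lesssim H_n\bigl((2M)^{-n}H_{n+1}^{-1}\bigr)^q\lesssim \bigl(K^{q-1}/(2M)^q\bigr)^n. \]
This ratio is less than $1$ because $M>K^{1-1/q}$ by \eqref{eq:parameter-choice}, so $\sum_n\|\mathsf b_n\|_{L^q}<\infty$.

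Finally, define $b=\sum_n\mathsf b_n$, which converges in $L^q$. It is supported in the fixed compact set $[0,1]\times\supp\omega$, using the support inclusion $\supp\theta_Q\subseteq\supp\theta_P$. Since $u_N\to u$ uniformly by Lemma~\ref{lem:measure-construction-holder}, $D_y(u_N-u_0)\to D_y(u-u_0)$ as distributions, while the partial sums converge to $b$ in $L^q$. Uniqueness of distributional limits then gives $D_y(u-u_0)=b\,\dd A$.

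No step is a genuine obstacle. The only point needing care is the bounded overlap, namely that at most one $P$ and at most two $j$ contribute at each point, and this follows from the $S$-adic disjointness and $\delta<1/2$.
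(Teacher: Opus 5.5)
Your proposal is correct and follows the paper's own argument, which simply says that the proof of Lemma~\ref{lem:vertical-derivative} carries over with $m$ replaced by $2M$, using $K^{1-1/q}/(2M)<1$ from~\eqref{eq:parameter-choice}. You spell that transfer out explicitly: the bounded overlap from the $S$-adic disjointness and $\delta<1/2$, the pointwise bound from Lemma~\ref{lem:theta-derivatives-holder}, the $O(H_n)$ support area, and the identification of the distributional limit.
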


\begin{proof}
    As in Lemma~\ref{lem:vertical-derivative}, we may take $b=\sum_{n=0}^\infty \mathsf b_n$, where
    \begin{equation}\label{eq:b-n-holder}
        \mathsf b_n(x,y) = \sum_{P \in \mathscr A_n} \sum_{j \in \widetilde{\mathcal J}_K} G_{P,j}(x)(\theta_P \phi_{P,j})'(y).
    \end{equation}
    Since $K^{1-1/q}/(2M)<1$ by~\eqref{eq:parameter-choice}, the proof follows from the same argument as in Lemma~\ref{lem:vertical-derivative}.
\end{proof}

\subsection{Mixed Riesz transform bound and Proof of Theorem~\ref{thm:holder-graph}}
\label{subsec:holder-riesz}

It remains to prove the mixed Riesz transform bound. In contrast with the Brownian case, where the uniform counting estimate controls the number of relevant upboxes, the rectangles here are arranged more regularly yet also more densely, so a direct application of Lemma~\ref{lem:single} is insufficient. split the sum into contributions from near and far intervals, applying Lemma~\ref{lem:single} to the former and the refined estimate of Lemma~\ref{lem:single+} to the latter. The key additional input for the far contribution is the zero-integral property from Lemma~\ref{lem:G-p-j-sum-zero}.

\begin{proposition}\label{prop:riesz-bounded-holder}
    The series $\sum_{n \ge 0} \mathcal T \mathsf b_n$ converges uniformly on $\mathbb C$ to $\mathcal Tb$. In particular, $\mathcal T b \in L^\infty(\mathbb C)$.
\end{proposition}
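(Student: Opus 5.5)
The plan mirrors the proof of Proposition~\ref{prop:riesz-bounded}. First I reduce to a geometric bound
\[ \|\mathcal T \mathsf b_n\|_{L^\infty} \lesssim \vartheta^n \qquad \text{for some } \vartheta<1. \]
Given this, $\sum_n \mathcal T\mathsf b_n$ converges uniformly. Its uniform limit agrees a.e.\ with the $L^q$-limit $\mathcal T b$, by Lemma~\ref{lem:vertical-derivative-holder} and the $L^q$-boundedness of $\mathcal T$. That yields the proposition.

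To bound $\mathcal T\mathsf b_n(z)$ for fixed $z$, I write $\mathsf b_n$ as in~\eqref{eq:b-n-holder} as a sum of terms $G_{P,j}\otimes\gamma_{P,j}$, with $\gamma_{P,j}=(\theta_P\phi_{P,j})'$. By Lemma~\ref{lem:theta-derivatives-holder} the hypotheses of Lemma~\ref{lem:single} hold with the following choices:
\begin{itemize}
\item $h=H_{n+1}$, $a_1=1+2\delta$, and $a_2\asymp(2M)^{-n}$;
\item $L=L_n=S^{-n}$ as the length of $P$.
\end{itemize}
Lemma~\ref{lem:G-p-j-sum-zero} also makes Lemma~\ref{lem:single+} available.

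Write $d_{P,j}=\dist(z,P\times J(P;j))$. Fix a threshold radius $D\ge H_{n+1}$, to be optimized. The two cases are:
\begin{itemize}
\item \textbf{Near rectangles} ($d_{P,j}\le D$). Lemma~\ref{lem:single} gives a contribution $\lesssim (2M)^{-n}L_n(H_{n+1}+d_{P,j})^{-2}$.
\item \textbf{Far rectangles} ($d_{P,j}> D$). Lemma~\ref{lem:single+} gives the improved bound $(2M)^{-n}L_n^2\, d_{P,j}^{-3}$.
\end{itemize}
As before, I then pass to the counting measure $\Lambda_n$ on the rectangle centers. Since $L_n\ll H_{n+1}$, the rectangles have diameter $\lesssim H_{n+1}$, so $d_{P,j}$ is comparable to $|z-z_{P,j}|$ up to $H_{n+1}$. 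It remains to sum over dyadic annuli.

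The key input replacing Lemma~\ref{lem:uniform-count} is a deterministic mass bound for $\Lambda_n(B(z,R))$. The crude bound
\[ \Lambda_n(B(z,R))\lesssim (R/H_{n+1}+1)(R/L_n+1) \]
follows because the intervals in $\mathscr A_n$ are disjoint $S$-adic intervals and each height row has bounded overlap. On its own, however, this crude bound only gives the divergent $(K/2M)^n$ for the near sum.

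So I will use the tree structure instead. Choose $k$ with $L_{k+1}\le R\le L_k$. The horizontal window then meets $O(1)$ intervals of $\mathscr A_k$. Descending from level $k$ to level $n$, at each generation every row of a box receives exactly $2M$ children, and a height band of size $\sim R+H_{n+1}$ meets only $O(1+R/H_i)$ of the rows at level $i$. This should give
\[ \Lambda_n(B(z,R))\lesssim (2M)^{n-k}\Bigl(1+\frac{R}{H_{n+1}}\Bigr)\cdot(\text{factor from band-restriction at levels } k<i\le n). \]
Deriving this bound precisely is the step I expect to be the main obstacle. The difficulty is that the band-restriction factor must be tracked through the generations where $H_i\gg R$, and it is there that $N$ rows shrink to $O(1)$ rows.

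With the mass bound in hand, I sum the near part over annuli $H_{n+1}\le 2^i H_{n+1}\le D$ using Lemma~\ref{lem:single}, and the far part over annuli beyond $D$ using the extra factor $L_n/d$ from Lemma~\ref{lem:single+}. The last step is to choose $D$, presumably a geometric mean of scales at level $n$, to balance the two sums. I expect both sums to be bounded by a power of $K^2/(2MS)$ raised to the $n$-th power, times harmless logarithmic factors. The third condition in~\eqref{eq:parameter-choice}, $K^2<2MS$, is designed for exactly this. If the balance comes out slightly off, I would first try replacing the single-step cancellation of Lemma~\ref{lem:single+} by iterating it once more via harmonicity, and only then revisit the counting.
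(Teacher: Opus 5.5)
There is a genuine gap. It lies in how you split near from far, not in the counting you flag as the main obstacle. Because your threshold is $D\ge H_{n+1}$, every rectangle within distance $H_{n+1}$ of $z$ is bounded by Lemma~\ref{lem:single}, i.e.\ without the cancellation $\int G_{P,j}=0$. No mass bound for $\Lambda_n$, however sharp, makes that part summable in the required range.

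To see this, let $k_0$ be the least integer with $L_{k_0}\le H_{n+1}$, so $S^{k_0}\approx K^{n+1}$. Fix $P_{k_0}\in\mathscr A_{k_0}$ and take $z=(x,y)$ with $x\in P_{k_0}$ and $y\in[y_{P_{k_0}},y_{P_{k_0}}+H_{k_0}]$. At each generation, follow the row whose height range contains $y$. This produces $(2M)^{n-k_0}$ intervals $P\in\mathscr A_n$ contained in $P_{k_0}$, each having some $J(P;j)\ni y$. These rectangles lie within distance $L_{k_0}\le H_{n+1}$ of $z$, so each has Lemma~\ref{lem:single} bound $\gtrsim(2M)^{-n}L_nH_{n+1}^{-2}$.

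Hence the near bound your method produces is at least of order $(2M)^{-k_0}L_nH_{n+1}^{-2}$. With $M\approx K^\beta$ and $S\approx K^{1+\beta}$, this is roughly $K^{n(1-\beta-\beta^2)/(1+\beta)}$. For large $K$ it grows with $n$ whenever $\beta$ is below $(\sqrt5-1)/2\approx0.618$. But $\beta<1/\alpha-1$ forces $\beta$ close to $1/2$ when $\alpha$ is close to $2/3$ (e.g.\ $\alpha=0.65$). So the plan cannot prove the proposition for the parameters of Section~\ref{subsec:holder-construction}.

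The band-restricted tree count cannot help either. The example shows that no bound on $\Lambda_n(B(z,H_{n+1}))$ that is uniform in $z$ can be smaller than $(2M)^{n-k_0}$. Your fallback of iterating the cancellation acts on the far region, which is already fine at $D=H_{n+1}$ even with the crude count.

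The missing idea is to use the cancellation at every distance above the \emph{horizontal} scale $L_n$. Lemma~\ref{lem:single+} should be used in its full form $L_n^2d^{-1}(H_{n+1}+d)^{-2}$, not only as $L_n^2d^{-3}$ beyond $H_{n+1}$. For $L_n\le d\le H_{n+1}$ it beats Lemma~\ref{lem:single} by the factor $L_n/d$, which is exactly what your split loses.

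The paper splits only by $d_P=\dist(x,P)$. The at most three intervals with $d_P<L_n$ get Lemma~\ref{lem:single}, summed over rows via the vertical distances $e_{P,j}$. All other intervals get Lemma~\ref{lem:single+}, using $D_{P,j}\ge(d_P+e_{P,j})/\sqrt2$. After that the trivial count suffices: at most four rows per vertical shell of width $H_{n+1}$, and at most two $S$-adic intervals per horizontal shell of width $L_n$, with no use of the tree. Each dyadic range of $d_P$ between $L_n$ and $H_{n+1}$ contributes $\lesssim(2M)^{-n}L_nH_{n+1}^{-2}$. These ranges produce only the factor $1+\log(1+H_{n+1}/L_n)\lesssim n+1$, giving $(n+1)(K^2/(2MS))^n$.

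Your isotropic discretization by rectangle centers determines distances only up to an additive $H_{n+1}$, so it is too coarse for a threshold at $L_n\ll H_{n+1}$. The anisotropic bookkeeping with $d_P$ and $e_{P,j}$ is what makes that threshold usable.
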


\begin{proof}
    Recall from~\eqref{eq:b-n-holder} that each $\mathsf b_n$ is smooth and compactly supported, so $\mathcal T \mathsf b_n \in L^\infty(\mathbb C)$. We claim that there exists a constant $C<\infty$ such that for all $n \ge 0$,
\begin{equation}\label{eq:T-b-bounded-holder}
    \|\mathcal T \mathsf b_n\|_{L^\infty} \le C (n+1) (K^2/(2MS))^n.
\end{equation}
Since $K^2/(2MS)<1$ by~\eqref{eq:parameter-choice}, this estimate implies $\sum_{n=0}^\infty \|\mathcal T \mathsf b_n \|_{L^\infty}<\infty$, and hence $\sum_{n \ge 0} \mathcal T \mathsf b_n$ converges uniformly to a bounded function. On the other hand, Lemma~\ref{lem:vertical-derivative-holder}, together with the boundedness of $\mathcal T$ on $L^q(\mathbb C)$, shows that the same series converges to $\mathcal T b$ in $L^q(\mathbb C)$. Therefore its uniform limit agrees with $\mathcal T b$ almost everywhere, proving the proposition once~\eqref{eq:T-b-bounded-holder} is established.

Fix $n \ge 0$ and $z=(x,y) \in \mathbb R^2$. For $P \in \mathscr A_n$ and $j \in \widetilde{\mathcal J}_K$, write $d_P=\dist(x,P)$, $e_{P,j}=\dist(y,J(P;j))$, and $D_{P,j}=\dist(z,P \times J(P;j))$. By~\eqref{eq:G-p-j-holder}, Lemmas~\ref{lem:theta-derivatives-holder} and~\ref{lem:G-p-j-sum-zero}, the pair $(G_{P,j}, (\theta_P\phi_{P,j})')$ satisfies the hypotheses of Lemma~\ref{lem:single} with $a_1=1+2\delta$ and $a_2=\max\{C_1,C_2\} (2M)^{-n}$. Moreover, $\int_{\mathbb R} G_{P,j}=0$.

For fixed $P$, recall that the intervals $J(P;j)$ are translates of each other by multiples of $H_{n+1}$ and have length $(1+2\delta)H_{n+1}<2H_{n+1}$. Hence, for each integer $k \ge 0$, there are at most four indices $j \in \widetilde{\mathcal J}_K$ satisfying $e_{P,j} \in [kH_{n+1},(k+1)H_{n+1})$. It follows that for every nonnegative decreasing integrable function $\mathsf F:[0,\infty) \to [0,\infty)$,
\begin{equation}\label{eq:summation-bound}
    \sum_{j \in \widetilde{\mathcal J}_K} \mathsf F(e_{P,j}) \le 4 \sum_{k=0}^\infty \mathsf F(kH_{n+1}) \le 4 \left( \mathsf F(0)+\frac{1}{H_{n+1}} \int_0^\infty \mathsf F(t) \dd t \right).
\end{equation}

We split the sum~\eqref{eq:b-n-holder} according to the value of $d_P$.

\begin{itemize}
    \item \textit{Intervals with $d_P<L_n$.} Since $\mathscr A_n$ consists of $S$-adic intervals of length $L_n$, there are at most three such intervals. Lemma~\ref{lem:single} gives
    \[ |\mathcal T(G_{P,j} \otimes (\theta_P \phi_{P,j})')(z)| \lesssim (2M)^{-n} L_n (H_{n+1}+D_{P,j})^{-2} \lesssim (2M)^{-n} L_n (H_{n+1}+e_{P,j})^{-2}. \]
    Applying~\eqref{eq:summation-bound} with $\mathsf F(t)=(H_{n+1}+t)^{-2}$ gives
    \begin{equation}\label{eq:near}
        \sum_{\substack{P \in \mathscr A_n \\ d_P<L_n}} \sum_{j \in \widetilde{\mathcal J}_K} |\mathcal T(G_{P,j} \otimes (\theta_P \phi_{P,j})')(z)| \lesssim (2M)^{-n} L_n H_{n+1}^{-2}.
    \end{equation}
    
    \item \textit{Intervals with $d_P \ge L_n$.} Since $\int_{\mathbb R} G_{P,j}=0$, Lemma~\ref{lem:single+} and $D_{P,j}\ge(d_P+e_{P,j})/\sqrt{2}$ imply
    \begin{equation}\label{eq:far-2}
        |\mathcal T(G_{P,j} \otimes (\theta_P \phi_{P,j})')(z)| \lesssim (2M)^{-n} L_n^2 (d_P+e_{P,j})^{-1} (H_{n+1}+d_P+e_{P,j})^{-2}.
    \end{equation}

    For $d>0$, the function $\mathsf F_d(t)=(d+t)^{-1} (H_{n+1}+d+t)^{-2}$ is decreasing and satisfies $\int_0^\infty \mathsf F_d(t) \dd t \le d^{-1} \int_0^\infty (H_{n+1}+d+t)^{-2} \dd t=(d(H_{n+1}+d))^{-1}$. Using~\eqref{eq:summation-bound} and~\eqref{eq:far-2}, we obtain
    \begin{align}\label{eq:far-3}
        \sum_{j\in\widetilde{\mathcal J}_K} |\mathcal T(G_{P,j} \otimes (\theta_P \phi_{P,j})')(z)|
        &\lesssim (2M)^{-n} L_n^2 \left( \frac{1}{d_P(H_{n+1}+d_P)^2} + \frac{1}{H_{n+1} d_P(H_{n+1}+d_P)} \right) \nonumber \\
        &\lesssim (2M)^{-n} \frac{L_n^2}{H_{n+1} d_P (H_{n+1}+d_P)}.
    \end{align}
    For each integer $k \ge 1$, there are at most two $S$-adic intervals of length $L_n$ such that $d_P \in [kL_n,(k+1)L_n)$. Since $t \mapsto (t(H_{n+1}+t))^{-1}$ is decreasing, we have
    \begin{align*}
        \sum_{\substack{P \in \mathscr A_n \\ d_P \ge L_n}} \frac{1}{d_P(H_{n+1}+d_P)} &\le 2 \sum_{k=1}^\infty \frac{1}{kL_n(H_{n+1}+kL_n)} \le \frac{2}{L_n(H_{n+1}+L_n)}+\frac{2}{L_n} \int_{L_n}^\infty \frac{1}{t(H_{n+1}+t)} \dd t \\
        &\lesssim (L_n H_{n+1})^{-1} \left(1+\log \left(1+\frac{H_{n+1}}{L_n} \right) \right).
    \end{align*}
    Together with~\eqref{eq:far-3}, this yields
    \begin{equation}\label{eq:far}
        \sum_{\substack{P \in \mathscr A_n \\ d_P \ge L_n}} \sum_{j \in \widetilde{\mathcal J}_K} |\mathcal T(G_{P,j} \otimes (\theta_P \phi_{P,j})')(z)| \lesssim (2M)^{-n} L_n H_{n+1}^{-2} \left(1+\log \left(1+\frac{H_{n+1}}{L_n} \right) \right).
    \end{equation}
\end{itemize}

Combining~\eqref{eq:near} and~\eqref{eq:far}, and
recalling~\eqref{eq:b-n-holder}, we conclude that
\[ \|\mathcal T \mathsf b_n\|_{L^\infty} \lesssim (2M)^{-n} L_n H_{n+1}^{-2} \left(1+\log \left(1+\frac{H_{n+1}}{L_n} \right) \right). \]
Since $L_n=S^{-n}$ and $H_{n+1}=K^{-(n+1)}$, this implies~\eqref{eq:T-b-bounded-holder} and thereby completes the proof.
\end{proof}

\begin{proof}[Proof of Theorem~\ref{thm:holder-graph}]
    By Lemma~\ref{lem:holder-regularity}, the function $f$ constructed above is $\alpha$-H\"older continuous. Lemmas~\ref{lem:measure-construction-holder} and~\ref{lem:vertical-derivative-holder}, together with Proposition~\ref{prop:riesz-bounded-holder}, verify the hypotheses of Proposition~\ref{prop:criterion+} with $v=u-u_0$ and with $\mu$ supported on $\Gamma_f$. It follows that there exists a non-M\"obius homeomorphism of $\widehat{\mathbb C}$ that is conformal on $\widehat{\mathbb C} \setminus \Gamma_f$. Thus $\Gamma_f$ is not conformally removable.
\end{proof}

\bibliographystyle{alpha}
\bibliography{ref}

\end{document}